\newtheorem{theorem}{Theorem}
\newtheorem{definition}[theorem]{Definition}
\newtheorem{lemma}[theorem]{Lemma}
\newtheorem*{lemma*}{Lemma}
\newtheorem{corollary}[theorem]{Corollary}
\newtheorem*{corollary*}{Corollary}
\newtheorem*{theorem*}{Theorem}
\newtheorem{proposition}[theorem]{Proposition}
\newcommand{\legendre}[2]{\genfrac{(}{)}{}{}{#1}{#2}}
\newcommand{\Q}{\mathbb{Q}}
\newcommand{\Z}{\mathbb{Z}}
\newcommand{\N}{\mathbb{N}}
\newcommand{\F}{\mathbb{F}}
\newcommand{\floor}[1]{\lfloor {#1} \rfloor}
\newcommand{\denom}{\text{denom}}
\newcommand{\num}{\text{num}}
\newcommand{\ord}{\text{ord}}
\thanks{All authors were supported by NSF grant DMS-1461189.}
\title{An Elliptic Curve Analogue to the Fermat Numbers}
\author{Skye Binegar}
\author{Randy Dominick}
\author{Meagan Kenney}
\author{Jeremy Rouse}
\author{Alex Walsh}
\address{Department of Mathematics, Reed College, Portland, OR 97202}
\email{binegars@reed.edu}
\address{Department of Mathematics \& Statistics, Texas Tech University, Lubbock, TX 79409}
\email{randydominick1093@gmail.com}
\address{Department of Mathematics, Bard College, Annandale-on-Hudson, NY 12504}
\email{mk6673@bard.edu}
\address{Department of Mathematics and Statistics, Wake Forest University, Winston-Salem, NC 27109}
\email{rouseja@wfu.edu}
\address{Mathematics Department, Brown University, Providence, RI 02912}
\email{alexandra\_walsh@brown.edu}
\begin{document}

\subjclass[2010]{Primary 11G05; Secondary 11B37, 11G15, 11Y11}
\begin{abstract}
The Fermat numbers have many notable properties, including order universality, coprimality, and definition by a recurrence relation. We use arbitrary elliptic curves and rational points of infinite order to generate sequences that are analogous to the Fermat numbers. We demonstrate that these sequences have many of the same properties as the Fermat numbers, and we discuss results about the prime factors of sequences generated by specific curves and points.
\end{abstract}

\maketitle



\section{Introduction}
\label{intro sec}

In August 1640, Fermat wrote a letter to Frenicle \cite[p. 205]{FermatBook} recounting his discovery that if $n$ is not a power of $2$, then $2^{n} + 1$ is composite. Fermat also states that if $n$ is a power of $2$, then $2^{n} + 1$ is prime. As
examples, he lists the first seven numbers in this sequence, $F_{n} = 2^{2^{n}} + 1, n \geq 0$, now called the sequence of Fermat numbers.


In 1732, Euler discovered that Fermat's observation was incorrect, and that $641$ divides $F_{5} = 4294967297$. Indeed, it is now known that $F_{n}$ is composite for $5 \leq n \leq 32$. Very little is known about whether any $F_n$ are prime; heuristics suggest that only finitely many of them are prime. However, mathematicians have been unable to prove that there are infintely many composite Fermat numbers.

The primality of the Fermat numbers is connected with the classical problem of constructing a regular polygon with $n$ sides using only an unmarked straightedge and a compass. In 1801, Gauss proved that if a positive integer $n$ is a power of two multiplied by a product of distinct Fermat primes, then a regular $n$-gon is constructible with a ruler and compass. The converse of this result was proven by Wantzel in 1837. (For a modern proof, see \cite[p. 602]{DummitFoote}.)

Elliptic curves are central objects in modern number theory and have led to novel methods of factoring (see \cite{Lenstra2}), proving that numbers are prime (see \cite{AtkinMorain}), and cryptography (see
\cite{Koblitz} and \cite{Miller}). They have also
played a role in a number of important theoretical developments, including the solution of Fermat's Last Theorem (see \cite{Wiles}) and the determinantion of all integer solutions to $x^{2} + y^{3} = z^{7}$ with $\gcd(x,y,z) = 1$ (see \cite{PSS}). The present paper relies on both elliptic curves and the sequence of Fermat numbers.

We begin with our central definition:

\begin{definition}
\label{elliptic fermat definition}
For an elliptic curve $E$ and a point $P \in E(\Q)$ of infinite order, let $2^kP = \left( \dfrac{m_k}{e_k^2}, \dfrac{n_k}{e_k^3} \right)$ denote $P$ added to itself $2^k$ times under the group law on $E(\Q)$. We define the sequence of \emph{elliptic Fermat numbers} $\{F_k(E,P)\}$ as follows:

\begin{equation*}
\label{elliptic fermat def equation}
F_k(E,P) =
\begin{cases}
\dfrac{e_k}{e_{k-1}} & $if $k \geq 1 \\
e_0 & $if $k = 0.
\end{cases}
\end{equation*}

\end{definition}

One helpful aspect of this definition is that it gives us a clear relationship between $F_k(E,P)$ and $e_k$, the factor in the denominator of $2^kP$. We will make frequent use of this connection, which we state in the following lemma:

\begin{lemma}
\label{ek lemma}
For all $k \geq 0$, $e_k = F_0(E,P) \cdot F_1(E,P) \cdots F_k(E,P)$.
\end{lemma}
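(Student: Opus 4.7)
The plan is to prove the identity by a short induction on $k$, treating it as a telescoping product that follows immediately from the definition of $F_k(E,P)$.

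For the base case $k=0$, the definition directly gives $F_0(E,P) = e_0$, so the statement holds.

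For the inductive step, I would assume the identity at level $k-1$, namely $e_{k-1} = F_0(E,P) \cdot F_1(E,P) \cdots F_{k-1}(E,P)$. Then, using the definition $F_k(E,P) = e_k / e_{k-1}$ for $k \geq 1$, I would compute
\[
F_0(E,P) \cdot F_1(E,P) \cdots F_{k-1}(E,P) \cdot F_k(E,P) = e_{k-1} \cdot \frac{e_k}{e_{k-1}} = e_k,
\]
which closes the induction.

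There is essentially no obstacle here: the lemma is purely formal, a telescoping cancellation encoded in the piecewise definition. I would not need to invoke any arithmetic properties of elliptic curves, division polynomials, or integrality of the ratios $e_k/e_{k-1}$; those properties (which presumably justify later calling $F_k(E,P)$ an integer) are not needed for this identity, which is valid just as a statement about the rational numbers produced by the recipe in Definition \ref{elliptic fermat definition}. The only thing worth noting in the write-up is that $e_{k-1} \neq 0$, which is automatic since $P$ has infinite order and hence $2^{k-1}P$ is a well-defined affine point with a nonzero denominator.
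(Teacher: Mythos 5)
Your proof is correct and is essentially the same as the paper's: the paper simply writes out the telescoping product $e_0 \cdot \frac{e_1}{e_0} \cdots \frac{e_k}{e_{k-1}} = e_k$ directly, which is just your induction compressed into one line. Your remark that $e_{k-1} \neq 0$ because $P$ has infinite order is a harmless extra observation, not a substantive difference.
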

\begin{proof}
$F_0(E,P) \cdot F_1(E,P) \cdots F_k(E,P) = e_0 \cdot \dfrac{e_1}{e_0} \cdots \dfrac{e_k}{e_{k-1}} = e_k.$
\end{proof}

Our goal is to show that the sequence $\{F_k(E,P)\}$ strongly resembles the classic Fermat sequence. We do so by adapting properties of the classic Fermat numbers and proving that they hold for the elliptic Fermat numbers. It is well-known, for example, that any two distinct classic Fermat numbers are relatively prime, as Goldbach proved in a 1730 letter to Euler. The elliptic Fermat numbers have a similar property:

\begin{theorem}
\label{coprimality thm}
For all $k \neq \ell$, $\gcd(F_k (E, P),F_\ell(E, P))\in \lbrace 1,2 \rbrace$.
\end{theorem}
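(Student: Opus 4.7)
The plan is to bound the $p$-adic valuation of $\gcd(F_k(E,P), F_\ell(E,P))$ one prime at a time, using the reduction theory of elliptic curves. By Lemma~\ref{ek lemma}, $F_k$ divides $e_k$, so it suffices to analyze $a_k := \ord_p(e_k)$ for each prime $p$. Standard results on the kernel of reduction give that $p \mid e_k$ iff $2^k P$ lies in the formal group $E_1(\Q_p)$; more precisely, $a_k$ equals the filtration depth of $2^k P$ in the chain $E_1(\Q_p) \supseteq E_2(\Q_p) \supseteq \cdots$, where $E_i(\Q_p) = \{Q \in E(\Q_p) : \ord_p(x(Q)) \leq -2i\} \cup \{O\}$.

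Fix $p$ and let $j$ be the smallest nonnegative integer with $p \mid e_j$ (if none exists, $p$ divides no $F_k$ and we are done). The sequence $\{a_k\}_{k \geq j}$ is governed by how $[2]$ acts on the formal group: the multiplication-by-$2$ power series satisfies $[2](T) = 2T + O(T^2) \in \Z[[T]]$, so for a parameter $t$ with $\ord_p(t) = m \geq 1$ one must compare $\ord_p(2t) = m + \ord_p(2)$ with the higher-order contributions of valuation $\geq 2m$.

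For odd $p$ this comparison yields $\ord_p([2](t)) = m$, so $[2]$ preserves the filtration depth, $a_k$ is constant for $k \geq j$, and $\ord_p(F_k) = 0$ for $k \neq j$; hence $p \nmid \gcd(F_k, F_\ell)$ for every odd $p$. For $p = 2$ the linear term has valuation $m+1$: if $m \geq 2$ then $m+1 < 2m$ and $[2]$ increases the depth by exactly $1$, while if $m = 1$ the linear and quadratic contributions both sit at valuation $2$ and one may conclude only that the depth jumps to some value $\geq 2$. It follows that $a_k = 0$ for $k < j$, $a_j \geq 1$, $a_{j+1} \geq 2$, and $a_k = a_{j+1} + (k-j-1)$ for $k \geq j + 1$, which gives $\ord_2(F_k) = a_j$ for $k = j$, $\ord_2(F_k) = a_{j+1} - a_j$ for $k = j+1$, and $\ord_2(F_k) = 1$ for $k \geq j+2$.

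A short case analysis---whose only nontrivial piece is $\{k,\ell\} = \{j, j+1\}$, handled by splitting on whether $a_j = 1$ or $a_j \geq 2$ (in the latter case the ``level $\geq 2$'' rule above forces $a_{j+1} - a_j = 1$)---then yields $\min(\ord_2(F_k), \ord_2(F_\ell)) \leq 1$ for every $k \neq \ell$, so $\ord_2(\gcd(F_k, F_\ell)) \leq 1$ and the gcd lies in $\{1,2\}$. I expect the main obstacle to be making the formal-group analysis at $p = 2$ airtight, particularly the irregular jump from depth $1$ to depth $\geq 2$; the odd-prime half collapses to a single observation about the linear term once the reduction machinery is in place.
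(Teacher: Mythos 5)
Your proof is correct, and it reaches the statement by a genuinely different route from the paper. The paper's argument is a three-line elementary one: from the duplication formula $X(2^kP)=A/B$ with $B=4n_{k-1}^2e_{k-1}^2$ one gets $e_k^2\mid 4n_{k-1}^2e_{k-1}^2$, hence $F_k\mid 2n_{k-1}$; since $\gcd(n_{k-1},e_{k-1})=1$ and $e_{k-1}=F_0\cdots F_{k-1}$, any common divisor of $F_k$ and an earlier $F_\ell$ divides $2$. Your approach instead tracks $\ord_p(e_k)$ through the filtration $E_1(\Q_p)\supseteq E_2(\Q_p)\supseteq\cdots$ and the formal group; this is sound as written, since the isomorphism $E_1(\Q_p)\cong\hat{E}(p\Z_p)$ via $t=-x/y$ and the integrality of $[2](T)=2T+O(T^2)$ require only an integral Weierstrass model, not good reduction at $p$, and your closing case analysis at $p=2$ does close: the only pair where both $2$-adic valuations could a priori exceed $1$ is $\{j,j+1\}$, and the dichotomy on $a_j$ (if $a_j=1$ then $\ord_2(F_j)=1$; if $a_j\ge 2$ then the depth-$\ge 2$ rule forces $\ord_2(F_{j+1})=1$) eliminates it. What the extra machinery buys you is strictly more information than the theorem asks for: you prove that $e_{k-1}\mid e_k$ (so that $F_k$ is an integer, which the paper takes for granted), that an odd prime divides at most one $F_k$ together with the exact valuation $\ord_p(F_j)=a_j$, and the assertion in the paper's remark following its proof --- that once $2\mid e_t$ the power of $2$ dividing $e_k$ strictly increases, so $2\mid F_k$ for all $k>t$ --- which the paper states without proof. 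The cost is invoking formal groups where a single divisibility observation from the duplication formula suffices.
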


It is worth noting that for certain curves and points, we will always have $\gcd(F_k (E, P),F_\ell(E, P)) = 1$, while for all other curves and points, we will have $\gcd(F_k (E, P),F_\ell(E, P)) = 2$ for sufficiently large $k$ and $\ell$.

The classic Fermat numbers also have the useful property that for any nonnegative integer $N$, $2$ has order $2^{k+1}$ in $(\mathbb{Z}/N\mathbb{Z})^\times$ if and only if $N \mid F_0 \cdots F_k$ and $N \nmid F_0 \cdots F_{k-1}$. This property, which we call \emph{order universality}, provides a powerful connection between order and divisibility. A close parallel applies to the elliptic Fermat numbers:

\begin{theorem}
\label{elliptic fermat order universality thm}
Let $\Delta(E)$ be the discriminant of $E$ and suppose that $N$ is a positive
integer with $\gcd(N,6 \Delta(E)) = 1$. Then $P$ has order $2^k$ in $E(\mathbb{Z}/N\mathbb{Z})$ if and only if $N \mid F_0(E,P) \cdots F_k(E,P) $ and $ N \nmid F_0(E,P) \cdots F_{k-1}(E,P).$ 
\end{theorem}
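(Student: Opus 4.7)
The plan is to translate the order-theoretic statement into a divisibility condition and then verify that condition prime by prime. By Lemma~\ref{ek lemma}, $F_0(E,P) F_1(E,P) \cdots F_j(E,P) = e_j$ for every $j \geq 0$, so the theorem is equivalent to the assertion that $P$ has order $2^k$ in $E(\mathbb{Z}/N\mathbb{Z})$ if and only if $N \mid e_k$ and $N \nmid e_{k-1}$. The order of $P$ divides $2^k$ iff $2^k P \equiv O \pmod N$, and fails to divide $2^{k-1}$ iff $2^{k-1} P \not\equiv O \pmod N$; because every proper divisor of $2^k$ already divides $2^{k-1}$, these two conditions together force the order to equal $2^k$. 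So it suffices to prove the single equivalence
\[
 2^j P \equiv O \text{ in } E(\mathbb{Z}/N\mathbb{Z}) \iff N \mid e_j
\]
and then apply it with $j = k$ and $j = k-1$.

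The next step is to localize at the primes dividing $N$. The hypothesis $\gcd(N, 6\Delta(E)) = 1$ guarantees that $E$ has good reduction at every $p \mid N$ and admits a short Weierstrass model with $p$-integral coefficients, so the Chinese Remainder Theorem furnishes a group isomorphism $E(\mathbb{Z}/N\mathbb{Z}) \cong \prod_{p^r \| N} E(\mathbb{Z}/p^r\mathbb{Z})$. It is therefore enough to prove, for each prime power $p^r$ with $p \nmid 6\Delta(E)$, that $2^j P \equiv O$ in $E(\mathbb{Z}/p^r\mathbb{Z})$ iff $p^r \mid e_j$.

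For this local statement I would use the formal group of $E$ at $O$. Writing $2^j P = (m_j / e_j^2, \, n_j / e_j^3)$ in the reduced form provided by the standard denominator structure on a Weierstrass curve (so $\gcd(m_j, e_j) = \gcd(n_j, e_j) = 1$), the parameter $t = -x/y$ on the formal group gives $t(2^j P) = -m_j e_j / n_j$. The standard identification of the kernel of $E(\mathbb{Q}_p) \to E(\mathbb{Z}/p^r\mathbb{Z})$ with $\{Q : v_p(t(Q)) \ge r\}$ then finishes the argument: if $p \mid e_j$ the coprimality forces $p \nmid m_j n_j$ and hence $v_p(t(2^j P)) = v_p(e_j)$, whereas if $p \nmid e_j$ the point is $p$-integral and fails to reduce to $O$ even modulo $p$. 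In both cases $2^j P$ reduces to $O$ modulo $p^r$ exactly when $p^r \mid e_j$.

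I expect the local step to be the main obstacle, since it rests on two non-trivial but standard facts about elliptic curves with good reduction: the identification of the kernel of reduction modulo $p^r$ with the $r$th layer of the formal group filtration, and the coprimality of numerators and denominators in the reduced form of $2^j P$. The factor of $6$ in the hypothesis $\gcd(N,6\Delta(E)) = 1$ is what is needed to put a short Weierstrass equation into $p$-integral form at each prime dividing $N$; once these facts are in hand, the whole argument reduces to a $p$-adic valuation count.
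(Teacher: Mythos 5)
Your proposal is correct, and its skeleton matches the paper's: both use Lemma~\ref{ek lemma} to reduce the theorem to the single equivalence ``$2^jP$ reduces to the identity in $E(\Z/N\Z)$ if and only if $N \mid e_j$,'' applied at $j=k$ and $j=k-1$. Where you genuinely diverge is in how that equivalence, and the underlying group structure, are justified. The paper works directly over the ring $\Z/N\Z$: it constructs the group law on $E(\Z/N\Z)$ following Lenstra (via the Lange--Ruppert addition formulas and a primitivity argument), proves that reduction $E(\Q) \to E(\Z/N\Z)$ is a homomorphism (Lemma~\ref{modNhom}), and then reads the equivalence off the projective coordinates $(m_je_j : n_j : e_j^3)$ together with $\gcd(m_j,e_j)=\gcd(n_j,e_j)=1$. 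You instead split $N$ into prime powers by CRT and invoke the formal-group filtration, identifying the kernel of $E(\Q_p)\to E(\Z/p^r\Z)$ with $\{Q : v_p(t(Q))\ge r\}$ and computing $v_p(t(2^jP))=v_p(e_j)$; the valuation count there is the same coprimality observation the paper uses, just phrased $p$-adically. The trade-off: your route is cleaner once the local theory is granted, but it outsources precisely the foundational issue --- defining a group on $E$ over a non-field ring and showing reduction is a homomorphism --- that occupies most of the paper's Section~\ref{order universality sec}; the paper's route avoids all $p$-adic machinery at the cost of carrying out the Lenstra construction explicitly. Both arguments are complete modulo their respective cited inputs, so this is a legitimate alternative proof rather than a gap.
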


In the case where $N = p$ for some odd prime $p$, we can make this statement stronger. For the classic Fermat numbers, we know that $2$ has order $2^{k+1}$ in $\F_p^\times$ if and only if $p \mid F_k$. The elliptic Fermat numbers yield the following result:

\begin{corollary}
\label{elliptic fermat order universality prime corollary}
For any odd prime $p \nmid 6\Delta(E)$, $P$ has order $2^k$ in $E(\F_p)$ if and only if $p \mid F_k(E,P).$
\end{corollary}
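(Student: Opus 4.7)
The plan is to derive this as a clean consequence of Theorem~\ref{elliptic fermat order universality thm} combined with Theorem~\ref{coprimality thm}, specialized to the prime modulus case. The single new ingredient needed is the observation that, for an odd prime $p$, the coprimality result forces $p$ to divide at most one elliptic Fermat number.

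First I would invoke Theorem~\ref{elliptic fermat order universality thm} directly with $N=p$: since $p \nmid 6\Delta(E)$ by hypothesis, the theorem applies and tells us that $P$ has order $2^k$ in $E(\F_p)$ if and only if
\[
p \mid F_0(E,P) \cdots F_k(E,P) \quad \text{and} \quad p \nmid F_0(E,P) \cdots F_{k-1}(E,P).
\]
So the task reduces to showing that, for odd $p$, this conjunction is equivalent to the single condition $p \mid F_k(E,P)$.

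For this reduction I would use Theorem~\ref{coprimality thm}: for any $k \neq \ell$ we have $\gcd(F_k(E,P), F_\ell(E,P)) \in \{1,2\}$. Since $p$ is odd, this gcd is not divisible by $p$, so $p$ divides at most one term in the sequence $\{F_j(E,P)\}$. Now I handle both directions. If $P$ has order $2^k$ in $E(\F_p)$, then by order universality $p$ divides the product $F_0 \cdots F_k$ but not $F_0 \cdots F_{k-1}$; since $p$ is prime it must divide some $F_j$ with $j \leq k$, and since it does not divide any $F_j$ for $j < k$, we conclude $p \mid F_k(E,P)$. Conversely, if $p \mid F_k(E,P)$, then $p \mid F_0 \cdots F_k$, and by the coprimality observation $p$ divides no other $F_j$, so $p \nmid F_0 \cdots F_{k-1}$; applying Theorem~\ref{elliptic fermat order universality thm} in the reverse direction yields that $P$ has order $2^k$ in $E(\F_p)$.

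There is no real obstacle here beyond bookkeeping — the argument is essentially the standard prime-modulus reduction for order-universality-type statements, and the only place the hypothesis that $p$ is odd enters is in ruling out the gcd value of $2$. I would write the proof in just a few lines, citing the two previous results and noting the use of oddness of $p$.
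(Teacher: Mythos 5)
Your proposal is correct and follows essentially the same route as the paper: apply Theorem~\ref{elliptic fermat order universality thm} with $N=p$ and use Theorem~\ref{coprimality thm} to see that an odd prime can divide at most one $F_j(E,P)$, which converts the two divisibility conditions into the single condition $p \mid F_k(E,P)$. Your explicit remark that oddness of $p$ is what rules out the gcd value $2$ is a point the paper leaves implicit, but the argument is the same.
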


This corollary plays a role in several important results in the paper.

Additionally, and quite interestingly, the classic Fermat numbers can be defined by several different recurrence relations. In Section \ref{recurrence sec}, we present the following analogous result:

\begin{theorem}
\label{recurrence thm}
Let $E: y^2 = x^3 + ax^2 + bx + c$ be an elliptic curve, and let $P \in E(\mathbb{Q})$ be a point of infinite order. We can define a recurrence relation for $F_k$ by the following system of equations:
\begin{footnotesize}
\begin{align}
\label{Fk rec}
F_k(E,P) & =  \dfrac{2n_{k-1}}{\tau_k} \\
\label{nk rec}
n_k(E,P) & =  \dfrac{-2am_{k-1}m_ke_{k-1}^2 - bm_{k-1}e_{k-1}^4F_k^2 - bm_ke_{k-1}^4 - 2ce_{k-1}^6F_k^2 + m_{k-1}^3F_k^2 - 3m_{k-1}^2m_k}{\tau_k} \\
\label{mk rec}
m_k(E,P) & =  \dfrac{m_{k-1}^4 - 2bm_{k-1}^2e_{k-1}^4 - 8cm_{k-1}e_{k-1}^6 + b^2e_{k-1}^8 - 4ace_{k-1}^8}{\tau_k^2} \\
\label{ek rec}
e_k(E,P) & =  F_0 \cdot F_1 \cdot F_2 \cdots F_{k-1} \cdot F_k
\end{align}
\end{footnotesize}
\end{theorem}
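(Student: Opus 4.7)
The plan is to derive all four recurrences by applying the standard duplication formula on $E$ to $Q := 2^{k-1}P = (m_{k-1}/e_{k-1}^2,\, n_{k-1}/e_{k-1}^3)$ and then renaming $2Q = 2^{k}P = (m_k/e_k^2,\, n_k/e_k^3)$. Equation \eqref{ek rec} is just Lemma~\ref{ek lemma}, and \eqref{Fk rec} can be read as the defining relation $\tau_k F_k = 2n_{k-1}$ for the normalizing factor $\tau_k$, so the real content is in \eqref{mk rec} and \eqref{nk rec}. For a curve $y^2 = x^3 + ax^2 + bx + c$, the duplication formulas give
\[
x(2Q) = \lambda^2 - a - 2x(Q), \qquad y(2Q) = \lambda\bigl(x(Q) - x(2Q)\bigr) - y(Q),
\]
with slope $\lambda = (3x(Q)^2 + 2ax(Q) + b)/(2y(Q))$.

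For \eqref{mk rec}, I would first verify, by direct expansion, the polynomial identity
\[
(3x^2+2ax+b)^2 - 4(a+2x)(x^3+ax^2+bx+c) \;=\; x^4 - 2bx^2 - 8cx + b^2 - 4ac.
\]
Substituting $x = m_{k-1}/e_{k-1}^2$ and $y = n_{k-1}/e_{k-1}^3$ into $x(2Q)$ and clearing denominators by $e_{k-1}^8$ in the numerator and $e_{k-1}^6$ in the denominator of $4y^2$ yields
\[
x(2^kP) \;=\; \frac{m_{k-1}^4 - 2bm_{k-1}^2 e_{k-1}^4 - 8cm_{k-1}e_{k-1}^6 + (b^2 - 4ac)e_{k-1}^8}{4n_{k-1}^2 e_{k-1}^2}.
\]
Using $e_k = e_{k-1}F_k$ from Lemma~\ref{ek lemma} together with $\tau_k F_k = 2n_{k-1}$, the denominator rewrites as $\tau_k^2 e_k^2$, and equating with $m_k/e_k^2$ produces \eqref{mk rec}.

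For \eqref{nk rec}, I would substitute the explicit expressions $\lambda = (3m_{k-1}^2 + 2am_{k-1}e_{k-1}^2 + be_{k-1}^4)/(2n_{k-1}e_{k-1})$ and $x(Q) - x(2Q) = (m_{k-1}F_k^2 - m_k)/(e_{k-1}^2 F_k^2)$ into the formula for $y(2Q)$, multiply both sides by $e_k^3 = e_{k-1}^3 F_k^3$ to clear denominators, and replace each factor of $2n_{k-1}$ by $\tau_k F_k$. The residual term $n_{k-1}F_k^3$ then becomes $2n_{k-1}^2 F_k^2 / \tau_k$, at which point invoking the curve equation $n_{k-1}^2 = m_{k-1}^3 + am_{k-1}^2 e_{k-1}^2 + bm_{k-1}e_{k-1}^4 + ce_{k-1}^6$ eliminates $n_{k-1}$ entirely. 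The $m_{k-1}^3 F_k^2$ and $a m_{k-1}^2 e_{k-1}^2 F_k^2$ contributions produced this way cancel against the corresponding terms from the expansion of $(3m_{k-1}^2 + 2am_{k-1}e_{k-1}^2 + be_{k-1}^4)(m_{k-1}F_k^2 - m_k)$, leaving exactly the six-term numerator displayed in \eqref{nk rec}.

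The main obstacle is purely bookkeeping in the $n_k$ derivation: the expression has roughly a dozen terms after substitution, and the cancellations producing \eqref{nk rec} only materialize once the curve equation is invoked at the right moment to rewrite $n_{k-1}^2$. Nothing in the argument is conceptually deep beyond careful tracking of signs, degrees, and the single identity with $\tau_k$.
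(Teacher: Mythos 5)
Your proposal is correct and follows essentially the same route as the paper: both reduce the theorem to \eqref{mk rec} and \eqref{nk rec}, obtain \eqref{mk rec} from the duplication formula with unreduced denominator $4n_{k-1}^2e_{k-1}^2 = \tau_k^2 e_k^2$, and obtain \eqref{nk rec} from the addition formula for the $y$-coordinate after substituting $2n_{k-1}=\tau_k F_k$ and $e_k = e_{k-1}F_k$ (the paper simply cites Silverman's formula for $Y(2^kP)$, with the curve equation already used to eliminate $n_{k-1}^2$, where you carry out that elimination explicitly, and it invokes Corollary~\ref{tau gcd} where you rewrite the denominator directly). The computations you outline do close up as claimed, so the argument is sound.
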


Unlike the various classic Fermat recurrence relations, which only depend on previous terms, the elliptic Fermat recurrence relation we have discovered relies on several other sequences, namely $m_k$, $n_k$, $e_k$, and $\tau_k$. While the first three sequences are related to the coordinates of $2^kP$, $\tau_k$ is defined as follows:

\begin{theorem}
\label{tau def}
Let $\tau_k=\dfrac{2n_{k-1}}{F_k(E,P)}$. Then $\tau_k\in \mathbb{Z}$.
\end{theorem}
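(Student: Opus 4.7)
The plan is to apply the doubling formula on $E: y^2 = x^3+ax^2+bx+c$ to the point $Q := 2^{k-1}P = (m_{k-1}/e_{k-1}^2,\, n_{k-1}/e_{k-1}^3)$ and read off a denominator-divisibility statement. First I would compute the tangent slope
\[
\lambda \;=\; \frac{3x(Q)^2+2ax(Q)+b}{2y(Q)} \;=\; \frac{3m_{k-1}^2+2am_{k-1}e_{k-1}^2+be_{k-1}^4}{2n_{k-1}e_{k-1}},
\]
substitute into $x(2Q) = \lambda^2 - a - 2x(Q)$, and put everything over the common denominator $4n_{k-1}^2 e_{k-1}^2$. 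Using $n_{k-1}^2 = m_{k-1}^3+am_{k-1}^2 e_{k-1}^2+bm_{k-1}e_{k-1}^4+ce_{k-1}^6$ (i.e.\ that $Q$ lies on $E$) to eliminate the $a$-dependent terms of the numerator, the identity collapses to
\[
\frac{m_k}{e_k^2} \;=\; \frac{m_{k-1}^4 - 2bm_{k-1}^2 e_{k-1}^4 - 8cm_{k-1}e_{k-1}^6 + (b^2-4ac)e_{k-1}^8}{4n_{k-1}^2 e_{k-1}^2}.
\]
This matches the shape of \eqref{mk rec}; the derivation is routine polynomial algebra, so in the actual write-up I would present it as a single display rather than unpacking the cancellations.

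Next I would extract arithmetic content from this identity. Because $m_k/e_k^2$ is in lowest terms (the standing convention of Definition \ref{elliptic fermat definition}), we have $\gcd(m_k,e_k)=1$, so cross-multiplying and cancelling $m_k$ against the right-hand side forces $e_k^2 \mid 4n_{k-1}^2 e_{k-1}^2$. Divisibility of squares lifts to divisibility of positive integers, so this yields $e_k \mid 2n_{k-1}e_{k-1}$.

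Finally, using $e_k = e_{k-1} F_k$ from Lemma \ref{ek lemma}, I would divide the relation $e_{k-1}F_k \mid 2n_{k-1}e_{k-1}$ through by $e_{k-1}$ to conclude $F_k \mid 2n_{k-1}$, whence $\tau_k = 2n_{k-1}/F_k \in \mathbb{Z}$. The only real obstacle is the algebraic simplification of the doubling formula; once that is in hand, the divisibility argument is two lines, and no new number-theoretic input beyond the lowest-terms convention and the tower relation of Lemma \ref{ek lemma} is needed.
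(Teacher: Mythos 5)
Your proposal is correct and takes essentially the same route as the paper's proof: apply the duplication formula to $2^{k-1}P$, use the lowest-terms convention to deduce $e_k^2 \mid 4n_{k-1}^2e_{k-1}^2$ and hence $e_k \mid 2n_{k-1}e_{k-1}$, and then cancel $e_{k-1}$ to obtain $F_k \mid 2n_{k-1}$. The only cosmetic difference is that the paper inserts an extra observation ($\gcd(e_{k-1},A)=1$, so $e_{k-1}\mid e_k$) before dividing, whereas your cancellation of $e_{k-1}$ from $e_{k-1}F_k \mid 2n_{k-1}e_{k-1}$ already yields $2n_{k-1}=\tau_k F_k$ with $\tau_k\in\mathbb{Z}$ directly.
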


This equation follows naturally from the definition of $F_k(E, P)$ and the duplication formula, which we will see in Section \ref{background sec}. In order to have a true recurrence relation, however, we need a way to explicitly calculate $|\tau_k|$. Luckily, we know the following fact:

\begin{theorem}
\label{tau alg thm}
The $|\tau_k|$ are eventually periodic, and there is an algorithm to compute $|\tau_k|$ for all $k$.
\end{theorem}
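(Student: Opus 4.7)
The plan is to reduce to a finite-prime analysis: first show that only a fixed finite set $S$ of primes (depending only on $E$) can ever divide any $\tau_k$, and then show that for each $p \in S$ the valuation $v_p(\tau_k)$ is eventually periodic in $k$. Since $|\tau_k| = \prod_{p \in S} p^{v_p(\tau_k)}$ with $S$ finite, eventual periodicity of each factor gives eventual periodicity of the product, and the whole procedure can be made constructive.

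For the first reduction I would apply the duplication formula to $Q = 2^{k-1}P = (m_{k-1}/e_{k-1}^2,\, n_{k-1}/e_{k-1}^3)$, namely $x(2Q) = \lambda^2 - a - 2x$ with $\lambda = (3x^2 + 2ax + b)/(2y)$. For an odd prime $p$ with $p \nmid \Delta(E)$, I would split into cases according to whether $p \mid e_{k-1}$ and whether $p \mid n_{k-1}$. The key input is that $p \nmid \Delta(E)$ forces $3x^2 + 2ax + b$ and $x^3 + ax^2 + bx + c$ to have unit resultant mod $p$, so whenever $Q$ reduces to a $2$-torsion point of $E(\F_p)$ the numerator of $\lambda$ is a $p$-adic unit. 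A short computation then yields $v_p(F_k) = v_p(2n_{k-1})$ in the only nontrivial case, and hence $v_p(\tau_k) = 0$. Consequently $\tau_k$ is supported on $S = \{2\} \cup \{p : p \mid \Delta(E)\}$, a finite set depending only on $E$.

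For each $p \in S$, I would study the orbit $\{2^k P\}_{k \geq 0}$ inside $E(\Z/p^r\Z)$. Since this group is finite for every $r$, the sequence $2^k P \bmod p^r$ is eventually periodic in $k$. Combining this with the behavior of doubling on the kernel-of-reduction filtration---for odd $p$ doubling preserves the $p$-adic valuation of the local parameter $z = -x/y$, while for $p = 2$ doubling increases it by $1$ once the orbit has entered the formal group---shows that both $v_p(2n_{k-1})$ and $v_p(F_k) = v_p(e_k) - v_p(e_{k-1})$ are eventually periodic functions of $k$, and hence so is $v_p(\tau_k)$. Multiplying over $p \in S$ gives eventual periodicity of $|\tau_k|$. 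The algorithm then factors $\Delta(E)$ to determine $S$, computes the orbit of $P$ in $E(\Z/p^r\Z)$ at sufficient precision $r$ to detect the cycle, tabulates the periodic values of $v_p(\tau_k)$, and assembles $|\tau_k|$ as the product.

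The main obstacle will be making the second step fully effective: one must give an a priori bound on the precision $r$ and on the length of the preperiod at each prime in $S$, which in turn requires careful handling of the N\'eron component group at primes of additive bad reduction and a precise accounting at $p = 2$ of when (and how) the orbit enters $E_1(\Q_2)$. Once that entry point is controlled, the $p$-adic stabilization is routine, but producing explicit bounds in terms of the invariants of $E$ and the coordinates of $P$ is the delicate part of the argument.
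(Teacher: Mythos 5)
Your overall strategy---cut the support of $\tau_k$ down to the finite set of primes dividing $\Delta(E)$, then establish eventual periodicity of $\ord_p(\tau_k)$ one prime at a time---is exactly the strategy of the paper, and your good-prime reduction via the resultant of $f$ and $f'$ is sound (the paper packages it slightly more sharply as $\tau_k^2 \mid \Delta(E)/4$, using the Silverman--Tate identity $D = fF + \phi\Phi$, so that only primes with $p^2 \mid \Delta(E)$ ever occur). But the second half of your argument has a genuine gap, and it sits at exactly the point you flag as delicate: you invoke the finiteness of ``the group $E(\Z/p^r\Z)$'' to conclude that $2^kP \bmod p^r$ is eventually periodic, yet the only primes left in play are those dividing $\Delta(E)$, which are precisely the primes for which the Lenstra group structure on $E(\Z/N\Z)$ used elsewhere in the paper (requiring $\gcd(N, 6\Delta(E)) = 1$) is unavailable. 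So ``the sequence $2^kP \bmod p^r$ is eventually periodic'' cannot be obtained for free from a group-order argument at bad primes; it is the statement that actually needs proof.

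The paper fills this hole with two devices you do not supply. First, a case split: if some $2^dP$ reduces to $(0:1:0)$ mod $p$, then $p \mid F_k$ for all $k \geq d$ while $p \nmid \num(Y(2^{k-1}P))$ for $k \geq d+1$, so $\ord_p(\tau_k) = 0$ from then on and no orbit analysis is needed at all. Otherwise $p \nmid F_k$ for every $k$, so $\ord_p(\tau_k) = \ord_p(2\,Y(2^{k-1}P))$, and the paper proves directly from the affine addition formulas (its lemma asserting $X(Q+R) \equiv X(Q)$ and $Y(Q+R) \equiv Y(Q) \pmod{p^s}$ whenever $p^s$ exactly divides the denominator parameter of $R$ and $p$ does not divide that of $Q$) that adding a multiple $rP$ which reduces to the identity mod $p^s$ does not change affine coordinates mod $p^s$; hence $\ord_p(Y(tP))$ depends only on $t \bmod r$, and the eventual periodicity of $2^k \bmod r$ finishes the job. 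That computation requires no group structure on $E(\Z/p^s\Z)$, which is what makes it usable at bad primes. Second, the precision issue you raise is resolved by the divisibility $\tau_k^2 \mid \Delta(E)/4$: taking $s = \lfloor \ord_p(\Delta(E))/2 \rfloor$ bounds $\ord_p(\tau_k)$ a priori, so working modulo $p^s$ suffices to read off every valuation that can occur, and no appeal to N\'eron component groups is needed. If you prove the congruence lemma for $Q \mapsto Q + rP$ by direct computation and import the discriminant bound, your outline becomes a complete proof.
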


In Section \ref{primality sec}, we address one of the most famous aspects of the classic Fermat numbers: the question of their primality. Whereas the primality of the Fermat numbers remains an open question, we have determined conditions under which we can show that there are finitely many prime elliptic Fermat numbers. We have the following theorem, where "the egg" refers to the non-identity component of the real points of the elliptic curve:

\begin{theorem}
\label{primality ek odd}
For an elliptic curve $E: y^2 = x^3 + ax^2 + bx + c$, assume the following:
\begin{enumerate}[(i)]
\item $E(\mathbb{Q}) = \langle P,T \rangle$, where $P$ has infinite order and $T$ is a rational point of order $2$.
\item $E$ has an egg.
\item $T$ is on the egg.
\item $T$ is the only integral point on the egg.
\item $P$ is not integral.
\item $\gcd(b,m_0) = 1$.
\item $|\tau_k| = 2$ for all $k$.
\item $2 \nmid e_k$ for all $k$.
\item The equations $x^4 + ax^2y^2 + by^4 = \pm 1$ has no integer solutions where $y \not \in \{0,\pm 1 \}$.
\end{enumerate}
Then $F_k(E,P)$ is composite for all $k \geq 1$.
\end{theorem}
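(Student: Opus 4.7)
The plan is a $2$-descent using the rational $2$-torsion point $T$. Working in coordinates where $T=(0,0)$ (so $c=0$ and $E: y^2 = x(x^2+ax+b)$), hypothesis (vii) forces $F_k = \pm n_{k-1}$, so the curve equation evaluated at $2^{k-1}P$ yields
\[
F_k^2 \;=\; m_{k-1}\cdot\bigl(m_{k-1}^2 + a\,m_{k-1}e_{k-1}^2 + b\,e_{k-1}^4\bigr).
\]
The first step is to show the two factors on the right are coprime. Any common prime must divide $b$ (because $\gcd(m_{k-1},e_{k-1})=1$); an induction using the simplified recurrence $m_k = ((m_{k-1}^2-be_{k-1}^4)/2)^2$ and the base case (vi) yields $\gcd(b,m_k)=1$ for all $k$, and (viii) eliminates the prime $2$. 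Since the two coprime factors have product $F_k^2$, each is $\pm$ a perfect square of a common sign; moreover the recurrence forces $m_{k-1}$ itself to be a positive square for $k\geq 2$, so I write $m_{k-1}=\mu_{k-1}^2$ and $m_{k-1}^2 + a\,m_{k-1}e_{k-1}^2 + b\,e_{k-1}^4 = \phi_k^2$, giving $F_k = \mu_{k-1}\phi_k$ and
\[
\phi_k^2 \;=\; \mu_{k-1}^4 + a\,\mu_{k-1}^2 e_{k-1}^2 + b\,e_{k-1}^4.
\]

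With $F_k$ now realized as a product of two positive integers, I proceed by contradiction: if $F_k$ were prime, one of $\mu_{k-1},\phi_k$ would equal $1$. In the first sub-case ($\phi_k=1$), the identity above reads $\mu_{k-1}^4 + a\mu_{k-1}^2 e_{k-1}^2 + be_{k-1}^4 = 1$, exactly the Thue equation of hypothesis (ix), forcing $e_{k-1}\in\{0,\pm 1\}$; but (v) and (viii) give $e_{k-1}\geq e_0\geq 3$, a contradiction. In the second sub-case ($\mu_{k-1}=1$), we have $m_{k-1}=1$ and $x(2^{k-1}P) = 1/e_{k-1}^2$, and the group law with $T=(0,0)$ produces
\[
2^{k-1}P + T \;=\; \bigl(b\,e_{k-1}^2,\; -b\,n_{k-1}\,e_{k-1}\bigr),
\]
an integral point distinct from $T$ (since $b\neq 0$ by nonsingularity of $E$). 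For $k\geq 2$ we have $2^{k-1}P = 2\cdot 2^{k-2}P \in E^\circ(\mathbb{R})$, and the egg is the nontrivial coset of $E^\circ$ containing $T$ by (iii); hence $2^{k-1}P + T$ lies on the egg, contradicting (iv).

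The case $k=1$ requires extra care because $m_0$ need not be a positive square and $P$ may itself lie on the egg rather than on $E^\circ(\mathbb{R})$. My plan here is to split on the position of $P$: if $P\in E^\circ(\mathbb{R})$ the case-$B$ argument above goes through verbatim with the role of $2^{k-1}P$ played by $P$ itself; otherwise $P$ lies on the egg, and the sub-cases must be coupled to the sign of $m_0$, to the position of the egg relative to $x=0$, and to hypotheses (iv) and (ix). The main obstacle is precisely this $k=1$ analysis, because the automatic positive-squareness of $m_k$ that drives the descent for $k\geq 2$ is unavailable at $k=0$; once $k=1$ is dealt with, the theorem follows directly from the descent described above.
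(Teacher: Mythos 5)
Your factorization is, once unwound, the same one the paper uses: since $|\tau_k|=2$ gives $F_k=|n_{k-1}|$, the paper's decomposition $F_k=e_T\cdot\left|n_T/(be_{k-1})\right|$, obtained from the point $2^{k-1}P+T=\left(be_{k-1}^2/m_{k-1},\,-bn_{k-1}e_{k-1}/m_{k-1}^2\right)$ with $e_T=\sqrt{|m_{k-1}|}$, is exactly your $F_k=\mu_{k-1}\phi_k$; both treatments kill the sub-case $\phi_k=1$ with the Thue hypothesis (ix) and $e_{k-1}\geq 3$. So for $k\geq 2$ your argument is sound and essentially the paper's, reached via the coprime factorization of $n_{k-1}^2=m_{k-1}(m_{k-1}^2+am_{k-1}e_{k-1}^2+be_{k-1}^4)$ rather than via matching denominators of $2(2^{k-1}P)$ and $2(2^{k-1}P+T)$.

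The genuine gap is the one you flag yourself, $k=1$, and the missing idea is that hypotheses (i)--(v) already force $T$ to be the only integral point on \emph{all} of $E$, not merely on the egg (this is the paper's Lemma~\ref{only one int pt}): every rational point is $mP$ or $mP+T$; after replacing $P$ by $P+T$ if necessary so that $P$ lies on the nose, the points $mP$ are non-integral because the denominators form a divisibility sequence and $P$ is non-integral by (v), while the points $mP+T$ lie on the egg and are non-integral by (iv). With that lemma your sub-case $\mu_{k-1}=1$ is contradictory for every $k\geq 1$ -- the integral point $2^{k-1}P+T\neq T$ cannot exist anywhere on $E$ -- and the positional analysis of $P$ you anticipate is unnecessary. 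One further point you correctly sense but should state explicitly: at $k=1$ the two coprime factors of $n_0^2$ could both be negative, so $\phi_1=1$ would read $\mu_0^4-a\mu_0^2e_0^2+be_0^4=-1$ rather than the equation of (ix) verbatim; this sign issue evaporates for $k\geq 2$ because $m_{k-1}=\left((m_{k-2}^2-be_{k-2}^4)/\tau_{k-1}\right)^2$ is a positive square (the paper glosses over this as well), so it must either be absorbed into the reading of (ix) or handled separately at $k=1$.
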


There are choices of $E$ for which all nine of the above conditions are satisfied. For example, we can take $E:y^2=x^3-199x^2-x$. Note that $\Delta(E)$ is positive and thus $E$ has an egg \cite[p. 420]{Silverman2}. The only integral point on the curve is $T=(0,0)$, which must be on the egg because $0$ is in between the $x$-coordinates of the other two roots of the polynomial. Also, $2T=(0:1:0)$ and thus $T$ is a rational point of order $2$ on $E$. The generating point of the curve is $P = (2809/9,89623/27)$, and $\gcd(-1,2809) = 1$. Using the algorithm to compute $\tau_k$, it can be seen that $|\tau_k| = 2$ for all $k$. The Tamagawa number at $2$ is $3$ and $P$ reduces to a singular point modulo $2$. It follows that $\ell P$ reduces to a non-singular point mod $2$ if and only if $3 | \ell$, and so $e_k$ is odd for all $k$. Finally, Magma \cite{Magma} can be used to solve Thue equations in order to conclude that there are no integer solutions to $x^4 - 199x^2y^2 - y^4 = \pm 1$, where $y \not \in \{0,\pm 1 \}$. Thus this example satisfies the conditions for the theorem, and so $F_k$ is composite for all $k$. 

Section \ref{growth rate sec} focuses on the growth rate of the elliptic Fermat numbers. Much like the classic Fermat numbers, the elliptic Fermat numbers grow at a doubly exponential rate, as shown by the following theorem:

\begin{theorem}
\label{growth rate thm}
Let $F_k$ be the $k$th elliptic Fermat number in the sequence generated by the elliptic curve $E$ and the point $P = \left( \dfrac{m_0}{e_0^2}, \dfrac{n_0}{e_0^3} \right)$. If $\hat{h}(P)$ denotes the canonical height of $P$, then
$\lim\limits_{k \to \infty} \dfrac{\log(F_k)}{4^k} = \dfrac{3}{8} \hat{h}(P)$.
\end{theorem}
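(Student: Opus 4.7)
The plan is to reduce the theorem to the standard asymptotic for the denominator sequence $(e_n)$ of the multiples of $P$, then apply the telescoping identity in Lemma~\ref{ek lemma}. By that lemma, $e_k = F_0 \cdot F_1 \cdots F_k$, so
\[
\log F_k = \log e_k - \log e_{k-1} \qquad (k \geq 1),
\]
and the limit of $\log F_k/4^k$ is determined entirely by the limit of $\log e_k/4^k$.

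The essential input I would cite is the classical asymptotic for elliptic divisibility sequences: writing $nP = (m_n/e_n^2,\ n_n/e_n^3)$ in lowest terms,
\[
\lim_{n \to \infty} \frac{\log e_n}{n^2} = \frac{\hat{h}(P)}{2},
\]
using the normalization $\hat{h}(P) = \lim_n h(x(nP))/n^2$, where $h$ is the logarithmic Weil height on $\mathbb{P}^1$. Specializing to $n = 2^k$ yields $\log e_k/4^k \to \hat{h}(P)/2$, and then a short telescoping computation gives
\[
\frac{\log F_k}{4^k} = \frac{\log e_k}{4^k} - \frac{1}{4}\cdot\frac{\log e_{k-1}}{4^{k-1}} \ \longrightarrow\ \frac{\hat{h}(P)}{2} - \frac{1}{4}\cdot\frac{\hat{h}(P)}{2} = \frac{3\hat{h}(P)}{8},
\]
which is the claimed identity.

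The main obstacle is the justification of the EDS asymptotic $\log e_n/n^2 \to \hat{h}(P)/2$. The natural route is via the N\'eron local decomposition
\[
\hat{h}(Q) = \hat{\lambda}_\infty(Q) + \sum_p \hat{\lambda}_p(Q),
\]
together with the fact that at a prime $p$ of good reduction, $\hat{\lambda}_p(Q) = v_p(e_Q)\log p$. Summing over primes gives $\sum_p \hat{\lambda}_p(2^k P) = \log e_k + O(1)$, where the $O(1)$ absorbs the bounded contributions from the finitely many primes of bad reduction; at each such prime the reduction $\overline{2^k P}$ is eventually periodic on the component group, so $\hat{\lambda}_p(2^k P)$ takes only finitely many values. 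The analytically delicate step is to show that the archimedean local height satisfies $\hat{\lambda}_\infty(nP) = o(n^2)$, which encodes the fact that the multiples $nP$ cannot approach the identity of $E(\mathbb{R})$ faster than sub-exponentially in $n^2$. Once that sub-quadratic bound is in hand, dividing the identity $\hat{h}(nP) = n^2\hat{h}(P)$ by $n^2$ and letting $n \to \infty$ produces the required asymptotic.
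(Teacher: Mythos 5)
Your proposal is correct and follows essentially the same route as the paper: both reduce the claim to the known asymptotic $\lim_{k\to\infty}\log(e_k^2)/4^k = \hat{h}(P)$ (which the paper simply cites from Silverman) and then telescope via Lemma~\ref{ek lemma} to get $\tfrac12\hat{h}(P) - \tfrac18\hat{h}(P) = \tfrac38\hat{h}(P)$. Your additional sketch of the N\'eron local-height decomposition correctly identifies where the real work lies (the archimedean/Siegel-type bound), but that input is treated as a citation in the paper rather than proved.
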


Finally, in Section \ref{specialcurve}, we examine the curve $E: y^2 = x^3 - 2x$ and the elliptic Fermat sequence generated by the point $P = (2,2)$. It is a theorem of Lucas that a prime divisor of the Fermat sequence is congruent to $1 \pmod{2^{n+2}}$. Upon examination of the factorization of the numbers in the sequence $\{F_n(E,P)\}$, we arrive at a pleasing congruence analogue.

\begin{theorem}\label{cong}
Let $E: y^2 = x^3 - 2x$ and consider the point $P = (2,2)$ and the elliptic Fermat sequence $(F_n (E,P))$. For any prime $p$ such that $p \vert F_k(E,P)$ for some $k$, we have
$$p \equiv \begin{cases} 
1 \pmod{2^n} & \text{ if } p \equiv 1 \pmod 4\\
-1 \pmod{2^n} & \text{ if } p \equiv -1 \pmod 4.
\end{cases}
$$
\end{theorem}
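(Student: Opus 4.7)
My plan is to apply Corollary \ref{elliptic fermat order universality prime corollary} to translate the hypothesis $p \mid F_k(E,P)$ into the statement that $P$ has order exactly $2^k$ in $E(\F_p)$. The discriminant of $E: y^2 = x^3 - 2x$ is $2^9$, so the corollary applies to every odd prime $p \neq 3$, covering both cases of the theorem. Once $E(\F_p)$ is known to contain an element of order $2^k$, the key tool is that $E$ has complex multiplication by $\Z[i]$, with $i$ acting as $(x,y) \mapsto (-x, iy)$.

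In the case $p \equiv 3 \pmod 4$ the prime is inert in $\Z[i]$, so $E/\F_p$ is supersingular with $|E(\F_p)| = p+1$. I would analyze the $2$-Sylow subgroup of $E(\F_p)$ by looking at which elements of $E[2] = \{O,(0,0),(\pm\sqrt{2},0)\}$ are $\F_p$-rational, which is determined by whether $2$ is a quadratic residue modulo $p$. If $p \equiv 3 \pmod 8$ then $2$ is a non-residue, so $E(\F_p)[2]$ is cyclic of order $2$ and hence $E(\F_p)[2^\infty]$ is cyclic; combined with $v_2(p+1)=2$, this forces $k \leq 2$ and $p \equiv -1 \pmod{2^k}$ is then immediate. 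If $p \equiv 7 \pmod 8$ all three nontrivial $2$-torsion points are rational, but the Weil pairing obstructs $E[4] \subset E(\F_p)$ because $v_2(p-1)=1$, so $E(\F_p)[2^\infty] \cong \Z/2 \oplus \Z/2^{v_2(p+1)-1}$. Existence of an element of order $2^k$ then gives $v_2(p+1) \geq k+1$, yielding $p \equiv -1 \pmod{2^{k+1}}$.

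In the case $p \equiv 1 \pmod 4$ the prime splits as $\pi\bar\pi$ in $\Z[i]$, and the Frobenius endomorphism is $\pi$ for an appropriate choice. Since $T_2(E)$ is free of rank one over $\Z_2[i]$, the $2$-primary part $E(\F_p)[2^\infty]$ is isomorphic to $\Z_2[i]/(\pi-1)$ as a $\Z_2[i]$-module. Letting $\lambda = 1+i$ denote the uniformizer of the (ramified) DVR $\Z_2[i]$, I would first prove a short lemma identifying the abelian group structure $\Z_2[i]/\lambda^m \cong \Z/2^{\lceil m/2 \rceil} \oplus \Z/2^{\lfloor m/2 \rfloor}$. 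The existence of an element of order $2^k$ then forces $v_\lambda(\pi-1) \geq 2k-1$. Because $\bar\lambda$ is an associate of $\lambda$, the same bound holds for $\bar\pi - 1$. Writing $\pi = 1 + \lambda^{2k-1}\alpha$ with $\alpha \in \Z_2[i]$ and expanding $p = \pi\bar\pi$, the cross term $\lambda^{2k-1}\alpha + \bar\lambda^{2k-1}\bar\alpha$ equals $2^k \, \mathrm{Re}(\gamma)$ for $\gamma \in \Z_2[i]$ (using $\lambda^{2(k-1)} = (2i)^{k-1}$), and the remaining term lies in $2^{2k-1}\Z_2$. Hence $p \equiv 1 \pmod{2^k}$.

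The main obstacle is the CM computation in the ordinary case: correctly identifying the $\Z_2[i]$-module structure of $E(\F_p)[2^\infty]$, tracking $\lambda$-adic valuations through the ramification of $2$, and converting the condition on $v_\lambda(\pi-1)$ into the sharper rational congruence on $p$ itself. The supersingular case is more elementary but requires splitting on $p \bmod 8$ to separate the cyclic and non-cyclic $2$-Sylow structures.
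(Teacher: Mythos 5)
Your proposal is correct, and in the ordinary case it takes a genuinely different route from the paper's. For $p \equiv 3 \pmod 4$ both arguments come down to $|E(\F_p)| = p+1$ plus Lagrange; your Sylow/Weil-pairing analysis is sound but unnecessary, since an element of order $2^k$ already forces $2^k \mid p+1$. For $p \equiv 1 \pmod 4$ the paper proceeds in two stages: it first proves the weaker congruence $p \equiv 1 \pmod{\max(2^{\floor{n/2}},8)}$ by writing $p = a^2+b^2$ and running through the case analysis of $a_p(E)$ from Proposition \ref{specialorder}, and only then, knowing $\sqrt{2}, i \in \F_p$, constructs an explicit half-point $Q$ with $(1+i)Q = P$ and studies the cyclic $\Z[i]$-submodule it generates to push the exponent up to $2^n$. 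You instead invoke the isomorphism $E(\F_p) \cong \Z[i]/(\pi-1)$ for the ordinary CM reduction, read off $v_{\lambda}(\pi-1) \ge 2k-1$ from the exponent of $\Z[i]/\lambda^{m} \cong \Z/2^{\lceil m/2\rceil}\oplus\Z/2^{\floor{m/2}}$, and then expand $p-1 = \mathrm{Tr}(\pi-1)+N(\pi-1) \in 2^{k}\Z + 2^{2k-1}\Z$ to land directly on $p \equiv 1 \pmod{2^k}$; this checks out. Your route is shorter and sharper in that it avoids both the two-squares case analysis and the explicit point-halving, at the cost of importing the standard but nontrivial fact that $E(\F_p)\cong \mathrm{End}(E)/(\pi-1)$ for an ordinary reduction with CM by the maximal order, which you assert rather than prove. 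One loose end shared with the paper: Corollary \ref{elliptic fermat order universality prime corollary} requires $p \nmid 6\Delta(E)$, so $p=3$ (which does divide $F_2(E,P)$) is not covered by order universality and must be checked directly; you at least flag the exclusion, whereas the paper's proof passes over it silently.
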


In addition to this congruence result, we have a partial converse that tells us about the presence of Fermat and Mersenne primes in $(F_n(E,P))$:

\begin{theorem}\label{congversef}
For $E: y^2 = x^3 - 2x$, consider the point $P = (2,2)$. Let $F_k = 2^{2^k}+1$ be a Fermat prime and $F_k \neq 5,17$. Then $F_k$ divides $F_n(E,P)$ for some $n \leq 2^{k-1} - 1$. 
\end{theorem}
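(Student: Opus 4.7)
The plan is to reformulate the divisibility via Corollary \ref{elliptic fermat order universality prime corollary}: $F_k \mid F_n(E,P)$ is equivalent to $P$ having order $2^n$ in $E(\F_{F_k})$. Writing $p=F_k$, it suffices to show that the order of $P=(2,2)$ in $E(\F_p)$ is a power of $2$ of size at most $2^{2^{k-1}-1}$. Throughout I will assume $k\ge 3$: the hypothesis excludes $F_k\in\{5,17\}$ (the cases $k=1,2$), and for $F_0=3$ the corollary does not apply because $3\mid 6\Delta(E)$. The approach is to compute the group $E(\F_p)$ explicitly using the complex multiplication of $E$ by $\Z[i]$, then show $P\in 2E(\F_p)$, which halves the a priori bound on the order of $P$.

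For the structure step, I will use that $E:y^2=x^3-2x$ has $j$-invariant $1728$ and $\operatorname{End}(E)=\Z[i]$, acting by $[i](x,y)=(-x,iy)$. Since $p\equiv 1\pmod 4$ and $p=1^2+(2^{2^{k-1}})^2$, the Frobenius is some unit multiple $\pi=u\bigl(1+2^{2^{k-1}}i\bigr)$ with $u\in\{\pm 1,\pm i\}$, where $u$ is determined by the quartic residue character of $2$ modulo $p$ (the twist parameter of $E$ in the family $y^2=x^3-nx$). The key computation is that $2$ has exact order $2^{k+1}$ in $\F_p^\times$, which divides the order $(p-1)/4=2^{2^k-2}$ of the subgroup of fourth powers precisely when $k+1\le 2^k-2$, i.e.\ when $k\ge 3$. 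So $2$ is a fourth power modulo $p$ in that range, which forces $u=1$, and $\pi-1=2^{2^{k-1}}i$ generates the ideal $(2^{2^{k-1}})$ in $\Z[i]$. The classical isomorphism $E(\F_p)\cong\Z[i]/(\pi-1)$ then yields $E(\F_p)\cong(\Z/2^{2^{k-1}}\Z)^2$, whose exponent is $2^{2^{k-1}}$.

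The final step is to show $P\in 2E(\F_p)$, which cuts the bound on the order of $P$ to $2^{2^{k-1}-1}$. For $k\ge 2$, $F_k\equiv 1\pmod 8$ gives $\sqrt{2}\in\F_p$ and places all of $E[2]$ in $E(\F_p)$; the standard $2$-descent injection $E(\F_p)/2E(\F_p)\hookrightarrow \F_p^\times/(\F_p^\times)^2\oplus\F_p^\times/(\F_p^\times)^2$ sends $(x_0,y_0)$ (with $x_0\neq 0,\pm\sqrt 2$) to $(x_0,\,x_0-\sqrt 2)$ modulo squares. For $P=(2,2)$ the image is $(2,\,2-\sqrt 2)$, so $P\in 2E(\F_p)$ iff both $2$ and $2-\sqrt 2$ are squares in $\F_p$ (the companion condition on $2+\sqrt 2$ follows since $(2-\sqrt 2)(2+\sqrt 2)=2$). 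The first is automatic from the fourth-power argument above. The crux of the proof, which I expect to be the main obstacle, is showing $2-\sqrt 2$ is a square in $\F_p$. For this I will invoke the identities $\sqrt{2-\sqrt 2}=2\sin(\pi/8)$ and $\sqrt{2+\sqrt 2}=2\cos(\pi/8)$, both of which lie in $\Q(\zeta_{16})$. Since $k\ge 3$ gives $16\mid F_k-1=2^{2^k}$, the prime $F_k$ splits completely in $\Q(\zeta_{16})$, so reduction at a prime of $\Z[\zeta_{16}]$ above $p$ embeds $\Q(\zeta_{16})$ into $\F_p$ and carries the algebraic identity $(2\sin(\pi/8))^2=2-\sqrt 2$ into $\F_p$, exhibiting $2-\sqrt 2$ as a square. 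Consequently $P\in 2E(\F_p)$, the order of $P$ divides $2^{2^{k-1}-1}$, and order universality translates this into $F_k\mid F_n(E,P)$ for some $n\le 2^{k-1}-1$, as required.
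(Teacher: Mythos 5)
Your proposal is correct, and its skeleton matches the paper's: both reduce via Corollary \ref{elliptic fermat order universality prime corollary} to bounding the order of $P$ in $E(\F_p)$, both establish $E(\F_p)\cong(\Z/2^{2^{k-1}}\Z)\times(\Z/2^{2^{k-1}}\Z)$ using that $2$ is a fourth power modulo $p=F_k$ for $k\ge 3$, and both then gain the final factor of $2$ by showing $P\in 2E(\F_p)$. The differences lie in how the two sub-steps are executed. For the group structure, the paper passes to the isomorphic curve $y^2=x^3-x$ and cites Denomme--Savin for $E'(\F_p)\cong\Z[i]/(1+i)^{2^k}$, whereas you compute the Frobenius $\pi=1+2^{2^{k-1}}i$ directly (the quartic character of $2$ pinning down the unit, consistently with Proposition \ref{specialorder}, which gives $a_p=2$ and $|E(\F_p)|=p-1$) and invoke $E(\F_p)\cong\Z[i]/(\pi-1)$; these are essentially equivalent, though you should supply a reference for that last isomorphism just as the paper does for its version. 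For the halving step the routes genuinely diverge: the paper's Lemma \ref{Qexist} factors multiplication by $2$ through the $2$-isogenous curve $y^2=x^3+8x$, verifies by a Magma computation that $P$ is halvable over $L=\Q(\sqrt2,\sqrt{2+\sqrt2})$, and then checks that $\sqrt2$ and $\zeta_{16}+\zeta_{16}^{-1}=\sqrt{2+\sqrt2}$ reduce into $\F_p$; you instead apply the complete $2$-descent criterion over $\F_p$ (with full $2$-torsion rational since $p\equiv1\pmod 8$, the point $(2,2)$ is halvable iff $2$, $2-\sqrt2$, $2+\sqrt2$ are all squares, and two of the three conditions follow from the third) and exhibit $2-\sqrt2$ as the square of $2\sin(\pi/8)=-i(\zeta_{16}-\zeta_{16}^{-1})\in\Z[\zeta_{16}]$ reduced modulo a prime above $p$, which is legitimate because $16\mid p-1$ makes $p$ split completely in $\Q(\zeta_{16})$ and because the squareness of $2-\sqrt2$ is independent of the choice of square root of $2$. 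Your treatment of this step is more self-contained --- it needs no isogeny formulas and no machine verification over a number field --- at the modest cost of invoking the $2$-descent injection; both arguments ultimately rest on the same fact that the relevant radicals live in $\Q(\zeta_{16})$ and hence in $\F_p$.
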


\begin{theorem}\label{congversem}
For $E: y^2 = x^3 - 2x$, consider the point $P = (2,2)$. Let $q = 2^p - 1 \geq 31$ be a Mersenne prime. Then $q$ divides $F_n(E,P)$ for some $n \leq p-3 \in \N$.
\end{theorem}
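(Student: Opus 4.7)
The plan is to apply Corollary~\ref{elliptic fermat order universality prime corollary} and bound the order of $P$ in $E(\F_q)$, which first requires the structure of $E(\F_q)$. Since $E$ has $j$-invariant $1728$ and thus CM by $\Z[i]$, and $q = 2^p-1 \equiv 3 \pmod 4$, $E$ is supersingular over $\F_q$ with Frobenius trace zero, giving $|E(\F_q)| = q+1 = 2^p$. Because $q \equiv -1 \pmod 8$ (using $p \geq 5$), the element $2$ is a square mod $q$, so $E[2] = \{O, (0,0), (\pm\sqrt 2,0)\}$ is $\F_q$-rational; the Weil pairing forces $E[4]\not\subset E(\F_q)$ since $4\nmid q-1 = 2(2^{p-1}-1)$. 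Therefore $E(\F_q) \cong \Z/2 \oplus \Z/2^{p-1}$, and a sufficient path to the theorem is to show $P \in 4 E(\F_q)$ (so that the order of $P$ divides $2^{p-3}$).

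For the descent I use the 2-isogeny $\phi\colon E \to E'$ with kernel $\{O,(0,0)\}$ and target $E'\colon y^2 = x^3+8x$, given by $\phi(x,y) = (y^2/x^2,\, y(-2-x^2)/x^2)$; one computes $\phi(P) = (1,-3)$. The element $-8 = (-1)\cdot 2\cdot 4$ is a nonresidue mod $q$, so $x^2+8$ is irreducible over $\F_q$ and $E'(\F_q)$ has only one $\F_q$-rational 2-torsion; together with $|E'(\F_q)| = 2^p$ this gives $E'(\F_q) \cong \Z/2^p$. Since $\phi(E(\F_q))$ has index $2$, it equals the unique index-$2$ subgroup $2E'(\F_q)$, so $\phi(P) = 2R$ for some $R\in E'(\F_q)$, which I pin down explicitly: the 2-division polynomial of $(1,-3)$ factors over $\Q$ as $(u^2+4u+8)(u^2-8u+8)$, the second factor has discriminant $32$ (a QR mod $q$) and splits with roots $4\pm 2\sqrt 2$, and the identity $u(u^2+8) = 64(1+\sqrt 2)^2$ at $u=4+2\sqrt 2$ makes $y_R$ rational, yielding $R = (4+2\sqrt 2,\, 8(1+\sqrt 2))$. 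A direct Vélu computation shows $\hat\phi(R) = -P$, so $P \in \hat\phi(E'(\F_q))$.

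It now remains to iterate the descent to conclude $P \in 4E(\F_q)$ (equivalently $\phi(P) \in 8 E'(\F_q)$). Because $E'(\F_q)$ is cyclic with only $(0,0)_{E'}$ rational, its Kummer map is simply $(x,y)\mapsto x \bmod (\F_q^\times)^2$, so $R \in 2 E'(\F_q)$ iff $x_R = 2\sqrt 2\,(1+\sqrt 2)$ is a QR in $\F_q$; since $2$ is QR and squares coincide with fourth powers in $\F_q^\times$ (as $\gcd(4,q-1) = 2$), both $2$ and $\sqrt 2$ are squares, reducing the condition to whether the reduction of the fundamental unit $1+\sqrt 2$ of $\Z[\sqrt 2]$ modulo a prime above $q$ is a QR. An analogous Kummer calculation for the next descent yields a companion QR condition on the subsequent halving. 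The main obstacle is proving these QR statements uniformly for every Mersenne prime $q = 2^p - 1 \geq 31$; I would realize $\sqrt 2 \equiv \beta_0 := 2^{(p+1)/2} \pmod q$ and evaluate $(1+\beta_0)^{(q-1)/2}$ via the squaring recursion $(A_{k+1}, B_{k+1}) = (A_k^2+2B_k^2,\, 2A_kB_k)$ for $(1+\sqrt 2)^{2^k} = A_k + B_k\sqrt 2$ (with $A_k^2-2B_k^2 = 1$ for $k\geq 1$), exploiting $\beta_0^2 \equiv 2$ and the expansion $(q-1)/2 = \sum_{k=0}^{p-2}2^k$. Once these residue identities are in hand, tracking the descent through $\hat\phi$ gives $P \in 4 E(\F_q)$, so by Corollary~\ref{elliptic fermat order universality prime corollary} the order of $P$ in $E(\F_q)$ divides $2^{p-3}$ and $q \mid F_n(E,P)$ for some $n \leq p-3$, as claimed.
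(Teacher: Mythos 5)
Your setup is sound and matches the paper's strategy: the group structure $E(\F_q)\cong \Z/2\Z\times\Z/2^{p-1}\Z$, the reduction to showing $P\in 4E(\F_q)$, and the descent through the $2$-isogenous curve $E'\colon y^2=x^3+8x$ are all exactly the ingredients the paper uses, and your explicit first halving $R=(4+2\sqrt 2,\,8(1+\sqrt2))$ is correct and needs nothing beyond $\sqrt 2\in\F_q$. But the proof is not complete: the two remaining halvings are precisely where all the work lies, and you defer them. You reduce the next step to ``$1+\sqrt2$ is a QR mod $q$'' and say the analogous condition for the step after that will follow from ``an analogous Kummer calculation,'' then explicitly label proving these residue identities as ``the main obstacle'' to be handled later. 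That obstacle is the theorem. Moreover, as stated your target claim is not even well-posed: since $(1+\sqrt2)(1-\sqrt2)=-1$ and $-1$ is a nonresidue mod $q\equiv 3\pmod 4$, exactly one of $1\pm 2^{(p+1)/2}$ is a square, so ``$1+\sqrt2$ is a QR'' depends on which square root of $2$ you fix; a uniform proof must either make the choice canonically or show that one of the two sign choices always completes the descent. Your proposed recursion for $(1+\sqrt2)^{(q-1)/2}$ via the binary expansion of $2^{p-1}-1$ is not carried out and does not obviously close (the naive Frobenius computation only returns $(1+\sqrt2)^{q-1}=1$, which is circular).

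The paper resolves exactly this point with a cyclotomic argument (Lemma \ref{uniroot}): since $q=2^p-1\equiv -1\pmod{32}$ for $p\ge 5$, the elements $z=\zeta_{16}+\zeta_{16}^{-1}$ and $w=\zeta_{32}+\zeta_{32}^{-1}$ lie in $\F_q$, giving $z^2=2+\sqrt2$ and $w^2=2+z$. Then $1+\sqrt 2=z^2/\sqrt2$ is a square because $\sqrt2=2^{(p+1)/2}$ has odd order (this also canonically fixes the right square root of $2$, namely $\zeta_8+\zeta_8^{-1}$), and the last required radical $\sqrt{z-1}$ is obtained from the norm identity $(z-1)(-z-1)=-(1+\sqrt2)$, a nonresidue, which forces exactly one of $\pm z$ to work. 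If you supply an argument of this kind for your two QR conditions (or adopt the paper's), your descent goes through; as written, the proposal is a correct plan with its decisive arithmetic input missing.
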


\subsection*{Acknowledgements}
We would like to thank the Wake Forest Department of Mathematics and Statistics for their hospitality and resources. We would also like to thank Magma version 2.22-9 \cite{Magma} and Sage version 7.5.1 \cite{sagemath}, which we used for computations.


\section{Background}
\label{background sec}

We begin with some general background on elliptic curves. For the purposes of this paper, an elliptic curve is a non-singular cubic curve defined over $\Q$ that has the form $y^2 = x^3 + ax^2 + bx + c$ for some $a, b, c \in \Z$. When we say $E$ is non-singular, we mean that there are no singular points on the curve. We will often think of $E$ as living in $\mathbb{P}^{2}$ and represent it with
the homogeneous equation $y^{2} z = x^{3} + ax^{2}z + bxz^{2} + cz^{3}$.

A \emph{singular point} is a point $P=(x:y:z)$ at which there is not a well-defined tangent line. These points occur when the following equations are equal to 0: 
\begin{eqnarray}
F(x,y,z) &=& y^2z-x^3-ax^2z-bx^2z-cz^3 \\ \nonumber
\frac{\partial F}{\partial x} &=& -3x^2-2azx-bz^2 \\ \nonumber
\frac{\partial F}{\partial y} &=& 2yz \\ \nonumber
\frac{\partial F}{\partial z} &=& y^2-ax^2-2bxz-3cz^2.
\end{eqnarray}

We write $E(\Q)$ to denote the set of rational points on $E$ along with the point at infinity, $(0 : 1 : 0)$. Using the following binary operation, we can give $E(\Q)$ a group structure: for $P, Q \in E(\Q)$, draw a line through $P$ and $Q$ and let $R = (x, y)$ be the third intersection point of the line with the curve. Then $P + Q = (x, -y)$. This operation gives an abelian group structure on $E(\Q)$ with $(0 : 1 : 0)$ as the identity.

Any $P \in E(\Q)$ can be expressed in projective space as $P = \left( \frac{m}{e^2} : \frac{n}{e^3} : 1 \right)$ = $(me : n : e^3)$ for some $m, n, e \in \Z$ with $\gcd(m, e) = \gcd(n, e) = 1.$ We can reduce each $P \in E(\Q)$ mod $p$ to a point in $E(\F_p)$ as follows: $P \bmod{p} = (me \bmod{p} : n \bmod{p} : e^3 \bmod{p})$. If $E/\F_p$ is non-singular, then the map from $E(\Q)$ to $E(\F_p)$ given by $P \mapsto P \bmod{p}$ is a homomorphism. We remark that $P \bmod{p} = (0 : 1 : 0)$ if and only if $p \mid e$.

Let $\mathbb{Q}_p$ be the field of $p$-adic numbers. The following sets are subgroups of $E(\mathbb{Q}_p)$:

\begin{eqnarray}
E_0(\mathbb{Q}_p)&=&\{P\in E(\mathbb{Q}_p)\mid P \hbox{ reduces to a non-singular point}\} \\ \nonumber
E_1(\mathbb{Q}_p)&=&\{P\in E(\mathbb{Q}_p)\mid P \hbox{ reduces to } (0:1:0)\pmod{p}\}.
\end{eqnarray}
Note that  $E_1(\mathbb{Q}_p)\subseteq E_0(\mathbb{Q}_p)\subseteq E(\mathbb{Q}_p)$.
It is also important to note that $[E(\mathbb{Q}_p):E_0(\mathbb{Q}_p)]$ is finite and is called the \emph{Tamagawa number} of $E$ at $p$.

Another important characteristic of elliptic curves is the discriminant. The \emph{discriminant} of an elliptic curve $E$ is defined as $\Delta(E) =64a^3c+16a^2b^2+288abc-64b^3-432c^2$, and it can tell us quite a bit about $E$. For example, when considering $E(\mathbb{R})$, curves can have either one or two components. We refer to the connected component of the identity as the \emph{nose}. If there is a second component, we refer to it as the \emph{egg}. The discriminant of $E$ is positive, if and only if $E$ has an egg \cite[p. 420]{Silverman2}. For a curve with two components, let $P_{egg},\text{ } Q_{egg}$ be points on the egg, and let $P_{nose},\text{ } Q_{nose}$ be points on the nose. Then $P_{egg} + Q_{egg}$ and $P_{nose} + Q_{nose}$ are on the nose, while $P_{egg} + P_{nose} = P_{nose} + P_{egg}$ is on the egg.



Since our definition of the elliptic Fermat numbers involves doubling points, it is convenient to use the notation $2^kP = \left(\frac{m_k}{e_k^2}, \frac{n_k}{e_k^3}\right)$. We also rely on the \emph{duplication formula} expressing
the $x$-coordinate of $2Q$ in terms of that of $Q$. In particular,
if $2^{k-1} P = (x_{k-1},y_{k-1})$, Silverman and Tate \cite[p. 39]{SilvermanTate} give:

\begin{equation*}
\label{duplication x y}
X(2^kP) = \dfrac{x_{k-1}^4 - 2bx_{k-1}^2 - 8cx_{k-1} + b^2 - 4ac}{4(x_{k-1}^3 + ax_{k-1}^2 + bx_{k-1} + c)}.
\end{equation*}

Letting $2^{k-1} P = \left( \dfrac{m_{k-1}}{e_{k-1}^2} , \dfrac{n_{k-1}}{e_{k-1}^3} \right)$, we can put this in terms of $m_{k-1}$ and $e_{k-1}$:

\begin{equation}
\label{duplication m e}
X(2^kP) = \dfrac{m_{k-1}^4-2bm_{k-1}^2e_{k-1}^4-8cm_{k-1}e_{k-1}^6+b^2e_{k-1}^8-4ace_{k-1}^8}{4e_{k-1}^2(m_{k-1}^3+am_{k-1}^2e_{k-1}^2+bm_{k-1}e_{k-1}^4+ce_{k-1}^6)}.
\end{equation}

Since $y^2 = x^3 + ax^2 + bx + c$, substituting $2^kP = (\frac{m_k}{e_k^2},\frac{n_k}{e_k^3})$ gives us

\begin{equation}
\label{n^2 eq}
n_k^2 = m_k^3 + am_k^2e_k^2 + bm_ke_k^4 + ce_k^6.
\end{equation}

Combining \eqref{duplication m e} and \eqref{n^2 eq}, we get our final duplication formula:

\begin{equation}
\label{duplication m n e}
X(2^kP) = \dfrac{m_{k-1}^4-2bm_{k-1}^2e_{k-1}^4-8cm_{k-1}e_{k-1}^6+b^2e_{k-1}^8-4ace_{k-1}^8}{4n_{k-1}^2e_{k-1}^2}.
\end{equation}

We will refer to the unreduced numerator and denominator in the above equation as $A$ and $B$, respectively, i.e.

\begin{gather}
\label{duplication A}
A = m_{k-1}^4-2bm_{k-1}^2e_{k-1}^4-8cm_{k-1}e_{k-1}^6+b^2e_{k-1}^8-4ace_{k-1}^8 \\
\label{duplication B}
B = 4n_{k-1}^2e_{k-1}^2.
\end{gather}

One last aspect of elliptic curves that will prove useful in section \ref{specialcurve} is the concept of complex multiplication. We say that an elliptic curve has \emph{complex multiplication} if its endomorphism ring is isomorphic to an order in an imaginary quadratic field. In other words, $E$ is equipped with more maps than simple integer multiplication of a point, and composition of these maps is similar to multiplication in an imaginary quadratic field.

Complex multiplication is relevant to our work because it allows us to count the points on the curve over finite fields. In the final section, we will study the curve $E: y^2 = x^3 - 2x$, and our results rely on having a good understanding of $|E(\F_p)|$. As a special case of Proposition 8.5.1 from Cohen \cite[p. 566]{Cohen}, we have the following fact about our curve $E$:

\begin{proposition}\label{specialorder}

Let $E: y^2 = x^3 - 2x$ be an elliptic curve and let $p$ be an odd prime. Then $|E(\F_p)| = p+1 - a_p(E)$, where $a_p(E)$ is known as the \emph{trace of Frobenius} of an elliptic curve over a prime. When $p \equiv 3 \pmod 4$, we have $a_p(E) = 0$. If $p \equiv 1 \pmod 4$, then

$$
a_p(E) = 2 \legendre{2}{p}
\begin{cases}
-a, & \text{ if } 2^{(p-1)/4} \equiv 1 \pmod p \\
a, & \text{ if } 2^{(p-1)/4} \equiv -1 \pmod p \\
-b, & \text{ if } 2^{(p-1)/4} \equiv -a/b \pmod p \\
b, & \text{ if }2^{(p-1)/4} \equiv a/b \pmod p \\
\end{cases}
$$
where $a$ and $b$ are integers such that $p = a^2 + b^2$ with $a \equiv -1 \pmod 4$.
\end{proposition}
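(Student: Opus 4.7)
The plan is to derive the formula from the complex multiplication structure of $E: y^2 = x^3 - 2x$. Over $\overline{\Q}$ the endomorphism ring of $E$ is $\Z[i]$, with $i$ acting by $(x,y)\mapsto(-x,iy)$. The count $|E(\F_p)|=p+1-a_p(E)$ is Hasse's theorem for elliptic curves over finite fields; the substantive content of the proposition is the explicit evaluation of $a_p$. First I would dispose of $p\equiv 3\pmod 4$: here $-1$ is a non-residue mod $p$, so the endomorphism $[i]$ is not defined over $\F_p$, forcing $E/\F_p$ to be supersingular. For supersingular reduction at a prime $p\geq 5$ one has $p\mid a_p$ together with the Hasse bound $|a_p|\leq 2\sqrt{p}$, whence $a_p=0$ (and a direct point count handles $p=3$).

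For $p\equiv 1\pmod 4$, the prime $p$ splits in $\Z[i]$ as $p=\pi\bar{\pi}$ with $\pi=a+bi$ and $a^2+b^2=p$, normalized by $a\equiv -1\pmod 4$ as in the statement. The Frobenius endomorphism $\phi_p\in\Z[i]$ has norm $p$, so it is a unit multiple of $\pi$ or $\bar{\pi}$, giving $a_p=\phi_p+\overline{\phi_p}=2\,\mathrm{Re}(\phi_p)\in\{\pm 2a,\pm 2b\}$. The task is to pin down which unit multiple. This is accomplished via a biquadratic residue calculation: reduction modulo $\pi$ identifies $\Z[i]/\pi$ with $\F_p$ by sending $i\mapsto -a/b$, and the action of $\phi_p$ on the $4$-torsion of $E$ records the quartic residue symbol $\legendre{2}{\pi}_4$. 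Concretely, $2^{(p-1)/4}\bmod p$ takes one of the four values $\pm 1,\,\pm a/b$ according as $2$ is a biquadratic residue, a quadratic non-biquadratic residue, or one of the two remaining fourth-power classes modulo $\pi$; each of the four possibilities pins down the corresponding unit, and hence the sign and which of $a,b$ appears. The outer prefactor $\legendre{2}{p}$ comes from the twist relating $y^2=x^3-2x$ to the standard CM curve $y^2=x^3-x$: the two become isomorphic only after adjoining $2^{1/4}$, and the quadratic piece of the resulting quartic twist contributes $\legendre{2}{p}$ to $a_p$.

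The main obstacle is the careful bookkeeping of signs and normalizations, since Gauss's explicit formula for $\legendre{2}{\pi}_4$ and the convention $a\equiv -1\pmod 4$ have to be threaded together to reproduce the four-case table exactly as stated. Rather than redoing this computation from scratch, I would invoke Proposition 8.5.1 of \cite{Cohen}, which treats the entire family $y^2=x^3+Ax$ via precisely this CM framework; specializing $A=-2$ and re-expressing the condition in terms of $2^{(p-1)/4}\bmod p$ with the prefactor $\legendre{2}{p}$ yields Proposition \ref{specialorder} verbatim.
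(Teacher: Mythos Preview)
Your proposal is correct and aligns with the paper's treatment: the paper does not prove Proposition~\ref{specialorder} at all, but simply states it as a special case of Proposition~8.5.1 in Cohen \cite[p.~566]{Cohen}, which is exactly the reference you invoke at the end. Your sketch of the CM mechanism (supersingularity for $p\equiv 3\pmod 4$, Frobenius as a unit multiple of a Gaussian prime for $p\equiv 1\pmod 4$, and the quartic twist accounting for the $\legendre{2}{p}$ factor) is sound and goes somewhat beyond what the paper offers, but the substantive step in both is the citation.
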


\section{Coprimality}
\label{coprimality sec}

In this section, we will prove Theorem \ref{coprimality thm} using Lemma \ref{ek lemma} and the duplication formula \eqref{duplication m n e}. Note that in this section, $F_k$ refers to the $k$th elliptic Fermat number.

\begin{proof}
Let $E: y^2 = x^3 + ax^2 + bx + c$ be an elliptic curve, and let $2^{k-1}P = \left( \dfrac{m_{k-1}}{e_{k-1}^2}, \dfrac{n_{k-1}}{e_{k-1}^3} \right)$ with $k \geq 1$ be a point in $E(\mathbb{Q})$.
Since we know from Lemma \ref{ek lemma} that $e_{k-1} = F_0 \cdot F_1 \cdot F_2 \cdots F_{k-2} \cdot F_{k-1}$, showing that $\gcd(F_k,e_{k-1})\in \lbrace 1,2 \rbrace$ is sufficient to prove Theorem \ref{coprimality thm}.
Recall the duplication formula:
\begin{equation*}
X(2^kP) = \dfrac{m_{k-1}^4-2bm_{k-1}^2e_{k-1}^4-8cm_{k-1}e_{k-1}^6+b^2e_{k-1}^8-4ace_{k-1}^8}{4n_{k-1}^2e_{k-1}^2}.
\end{equation*}
So $e_k^2 \mid 4n_{k-1}^2e_{k-1}^2$ and thus $F_k \mid 2n_{k-1}$.
Now, since $\gcd(n_{k-1},e_{k-1}) = 1$, if $2 \mid F_k$ and $2 \mid e_{k-1}$, then $\gcd(F_k,e_{k-1}) = 2$. Otherwise we must have $\gcd(F_k,e_{k-1}) = 1$.
\end{proof}

Note that if $2 \mid e_t$ for some $t$, then $2 \mid e_k$ for all $k \geq t$. Not only that, but the power of $2$ that divides $e_k$ will increase as $k$ increases. Thus $2 \mid F_k$ for all $k \geq t$. So in this case, $\gcd(F_k,F_\ell) = 2$ for all $k$, $\ell \geq t$, $k \neq \ell$. Otherwise, if $2 \nmid e_k$ for all $k$, then $\gcd(F_k,F_\ell) = 1$ for all $k \neq \ell$.

\section{Order Universality}
\label{order universality sec}

The proof of Theorem \ref{elliptic fermat order universality thm}, which is itself fairly straightforward, requires the existence of a homomorphism from $E(\Q)$ to $E(\Z/N\Z)$. We know that this homomorphism exists when $N = p$ for an odd prime $p$ with $p \nmid \Delta(E)$, since in that case we are working with $E(\F_p)$. When $N$ is not prime, however, we need to define a group structure on elliptic curves over finite rings before we can talk about such a map. To do so, we adapt the group structure of elliptic curves over fields, as discussed by Lenstra \cite{Lenstra}. We can define a group structure on $E(\Z/N\Z)$ provided that the following conditions hold:

\begin{enumerate}
\item $\gcd(N, 6\Delta(E)) = 1$;
\item For any primitive $m \times n$ matrix with entries in $\Z/N\Z$ whose $2 \times 2$ subdeterminants are all zero, there exists a linear combination of the rows that is primitive in $\Z/N\Z$.
\end{enumerate}

\indent We say that a finite collection of elements $(a_i)$ of a ring $R$ is $primitive$ if it generates $R$ as an $R$-ideal, that is, if there exist $b_i \in R$ such that $\Sigma b_ia_i = 1$. A matrix is primitive if its entries are primitive in $R$. We note that condition (2) holds for any finite ring and is therefore true no matter which $N$ we choose. 

Assume $N$ satisfies the above conditions, and let $S$ and $T$ be points in $E(\Z/N\Z)$ given by $S = (x_1 : y_1 : z_1)$ and $T = (x_2 : y_2 : z_2).$ Suppose $S \neq (0 : 1 : 0)$ or $T \neq (0 : 1 : 0)$. If $N = p$ for some odd prime $p$, then $\Z/N\Z$ is the field $\F_p$, and we can define the line connecting $S$ and $T$ in the standard way, i.e. by using one of two linear equations, the choice of which depends on if $x_1 = x_2$ or $y_1 = -y_2$. Each equation will give a formula for $S + T$, respectively denoted by $(q_1 : r_1 : s_1)$ and $(q_2 : r_2 : s_2)$, where $q_i, r_i, s_i$ are polynomial expressions in terms of $x_i, y_i$ and $z_i$. Neither formula is defined in the case where $S = T = (0 : 1: 0)$, but it is simple enough to let $S + T = (0 : 1 : 0)$.

If $N$ is not prime, on the other hand, then two equations do not suffice. Whereas $S = T = (0 : 1 : 0)$ over a field $\mathbb{F}_p$ only when $S \equiv T \equiv (0 : 1 : 0) \pmod{p}$, there is more potential for trouble over a ring. Suppose, for example, that $N = pq$ for distinct primes $p$ and $q$. It is then possible that $S \equiv T \equiv (0 : 1 : 0) \pmod{p}$ but $S \not\equiv (0 : 1 : 0) \pmod{q}$ or $T \not\equiv (0 : 1 : 0) \pmod{q}$. In this event, neither formula applies mod $p$, but it is not the case that $S \equiv T \equiv (0 : 1 : 0) \pmod{N}$. To account for these dangerous possibilities, we include a third equation, which in turn yields a new formula for $S + T$, denoted by $(q_3 : r_3 : s_3)$.  We then have nine polynomial expressions, $q_i, r_i, s_i$ for $i \in \{1,2,3\}$, the explicit formulae for which are stated by Lange and Ruppert \cite{LangeRuppert}.

With these polynomials in hand, we now consider the $3 \times 3$ matrix given by 

$$A = \begin{pmatrix}

q_1 & r_1 & s_1 \\
q_2 & r_2 & s_2 \\
q_3 & r_3 & s_3

\end{pmatrix}.$$

\vspace{0.1in}

The matrix $A$ is primitive, and all of its $2 \times 2$ subdeterminants are zero \cite{Lenstra}. Thus, by condition (2) above, there exists a linear combination of rows, $(q_0, r_0, s_0)$, that is primitive in $(\Z/N\Z)$. This linear combination is uniquely determined up to multiplication by units. We can thus define the sum of $S$ and $T$ to be $(q_0 : r_0 : s_0)$. As Lenstra notes \cite{Lenstra}, the other group axioms follow from the definition of this operation. Hence we have defined a group structure on $E(\Z/N\Z)$.

Applying this group structure to $E(\Q)$ allows us to define a homomorphism from $E(\Q)$ to $E(\Z/N\Z)$, just as we desired.

\begin{lemma}
\label{modNhom}
The map $\phi : E(\Q) \rightarrow E(\Z/N\Z)$ given by $P \mapsto P \bmod{N}$ is a homomorphism.

\end{lemma}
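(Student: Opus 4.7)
The plan is to reduce the verification to the case of prime-power moduli via the Chinese Remainder Theorem, then invoke the classical fact that reduction modulo a prime of good reduction is a group homomorphism. The identity case is immediate, since $(0:1:0)\in E(\Q)$ reduces to $(0:1:0)\in E(\Z/N\Z)$, the additive identity.

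For the main step, I would fix integer projective representatives $(x_1:y_1:z_1)$ and $(x_2:y_2:z_2)$ with coprime coordinates for $P$ and $Q$, and feed them into the three Lange--Ruppert addition polynomials to produce the $3\times 3$ integer matrix $A$ with rows $(q_i,r_i,s_i)$. Over $\Q$, any non-zero row of $A$ gives projective coordinates of $P+Q\in E(\Q)$, and all non-zero rows are $\Q$-scalar multiples of one another, so $A$ has rank at most $1$ over $\Q$. After reduction mod $N$, the matrix $\overline{A}$ still has vanishing $2\times 2$ subdeterminants, so Lenstra's construction produces a primitive linear combination $(q_0:r_0:s_0)$ of its rows, unique up to a unit in $\Z/N\Z$. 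The remaining task is to identify $(q_0:r_0:s_0)$ with the projective reduction of $P+Q$ modulo $N$. By CRT, it suffices to do this modulo each prime-power divisor $p^{e_p}$ of $N$; the hypothesis $\gcd(N,6\Delta(E))=1$ forces $E$ to have good reduction at each such $p$, so the classical reduction map $E(\Q)\hookrightarrow E(\Q_p)\to E(\Z/p^{e_p}\Z)$ is a group homomorphism, and its image on $P+Q$ is a primitive projective vector representing $\phi(P)+\phi(Q)$ mod $p^{e_p}$. Since primitive projective representatives over $\Z/p^{e_p}\Z$ are unique up to units, this pins down $(q_0:r_0:s_0)$ as the reduction of $P+Q$.

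The main obstacle I anticipate is the bookkeeping needed to match Lenstra's ``primitive linear combination'' construction on $E(\Z/N\Z)$ with the classical reduction picture on each prime-power quotient. Specifically, when all three Lange--Ruppert rows individually fail to be primitive modulo some prime divisor of $N$, one must check that the particular combination selected by Lenstra's lemma still reduces to a representative of $P+Q$, rather than to some other primitive vector in the rank-$1$ row span. This should follow from the fact that primitive generators of a cyclic submodule of $(\Z/p^{e_p}\Z)^3$ are unique up to units, combined with the observation that the integer projective coordinates of $P+Q$ reduce into that row span; but writing the identification down cleanly, without appealing to Lenstra's paper as a black box, is the most delicate part of the argument.
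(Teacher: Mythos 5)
The central step of your argument --- ``the classical reduction map $E(\Q)\hookrightarrow E(\Q_p)\to E(\Z/p^{e_p}\Z)$ is a group homomorphism'' --- is where the gap lies. The classical good-reduction theorem gives a homomorphism $E(\Q_p)\to E(\F_p)$, i.e.\ reduction modulo $p$ to a curve over the residue \emph{field}. For $e_p>1$ the set $E(\Z/p^{e_p}\Z)$ is not the points of a curve over a field, and the only group structure in play is the one defined by Lenstra's primitive-linear-combination construction; the assertion that reduction mod $p^{e_p}$ is a homomorphism for that structure is precisely the lemma in the case $N=p^{e_p}$, so invoking it as a known fact is circular. (One could rescue it by appealing to smoothness of the N\'eron model and identifying the group-scheme structure on $\Z/p^{e_p}\Z$-points with Lenstra's, but that is far from a classical black box and is heavier than the lemma itself.) Once this step is removed, the CRT reduction buys you nothing, since the prime-power case is the whole problem.

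The good news is that the resolution you sketch in your last paragraph is essentially the complete proof, and it works directly over $\Z/N\Z$ with no CRT and no good-reduction input --- this is exactly what the paper does. Since $A$ is primitive over $\Z$ and its rows span a rank-one lattice, there is a primitive $\Z$-linear combination $(q_0,r_0,s_0)$ of the rows of $A$, and being a nonzero $\Q$-multiple of the rows it represents $P+Q$. A relation $k_1a_1+\cdots+k_9a_9=1$ witnessing primitivity over $\Z$ reduces to the same relation over $\Z/N\Z$, so $(q_0,r_0,s_0)\bmod N$ is a primitive linear combination of the rows of $\overline{A}$; since Lenstra's output is the unique such combination up to units, it equals $\phi(P)+\phi(Q)$, and it visibly equals $\phi(P+Q)$. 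I would recommend promoting that observation from ``the most delicate part'' to the entire argument, and deleting the CRT/good-reduction detour.
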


\begin{proof}

Let $P$ and $Q$ be points in $E(\Q)$ given by $P = (x_1 : y_1 : z_ 1)$ and $Q = (x_2 : y_2 : z_ 2)$. Scale $P$ and $Q$ so that $x_i, y_i, z_i$ are integers with $\gcd(x_1, y_1, z_1) = \gcd(x_2, y_2, z_2) = 1$. Now $(x_1, y_1, z_1)$ and $(x_2, y_2, z_2)$ are primitive in $\Z$, so we are essentially working in $E(\Z) \subseteq E(\Q)$. We construct a $3 \times 3$ matrix $A$ whose entries are the polynomial expressions described earlier, which we denote by $q_i, r_i, s_i$ for $i \in \{1,2,3\}$. Since $A$ is primitive with all $2 \times 2$ subdeterminants equal to $0$, we can find a primitive $\Z$-linear combination $(q_0, r_0, s_0)$ of its rows that will yield the point $P + Q = (q_0 : r_0 : s_0)$. Thus $\phi(P + Q) \equiv (q_0 : r_0 : s_0) \pmod{N}$.

Next, we calculate $\phi(P) + \phi(Q)$. Since $\phi(P) \equiv P \bmod{N}$ and $\phi(Q) \equiv Q \bmod{N}$, and since $f(x) \equiv f(x\pmod{N}) \pmod{N}$ for any polynomial $f(x)$, the values of the nine polynomials we seek will be the same as those defined above, mod $N$. So the entries of the resulting matrix $A'$ will be exactly the entries of $A$, mod $N$:

$$A' = \begin{pmatrix}

q_1 \bmod{N} & r_1 \bmod{N} & s_1 \bmod{N} \\
q_2 \bmod{N} & r_2 \bmod{N} & s_2 \bmod{N} \\
q_3 \bmod{N} & r_3 \bmod{N} & s_3 \bmod{N}

\end{pmatrix}.$$

Since $(q_0, r_0, s_0)$ is primitive in $\Z$, we know there exist some $k_1, k_2, k_3 \in \Z$ such that $k_1(q_0q_1 + r_0q_2 + s_0q_3) + k_2(q_0r_1 + r_0r_2 + s_0r_3) + k_3(q_0s_1 + r_0s_2 + s_0s_3) = 1$. This implies that $k_1(q_0q_1 + r_0q_2 + s_0q_3) + k_2(q_0r_1 + r_0r_2 + s_0r_3) + k_3(q_0s_1 + r_0s_2 + s_0s_3) \equiv 1 \pmod{N} = 1$. Thus $(q_0 \bmod{N}, r_0 \bmod{N}, s_0 \bmod{N})$ is a primitive linear combination of the rows of $A'$. It follows that $\phi(P) + \phi(Q) = (q_0 \bmod{N} : r_0 \bmod{N} : s_0 \bmod{N}) = \phi(P + Q)$, and we have shown that $\phi$ is a homomorphism.

\end{proof}

With this homomorphism in place, we are finally in a position to directly approach the proof of Theorem \ref{elliptic fermat order universality thm}.

\begin{proof}

Let $P \in E(\Q)$ be a point of infinite order and $k$ a nonnegative integer. Recall that we denote $2^kP = (m_ke_k : n_k : e_k^3)$ for $m_k, n_k, e_k \in \Z$ with $\gcd(m_k,e_k) = \gcd(n_k, e_k) = 1$. Suppose $N$ is a nonnegative integer with $\gcd(N, 6\Delta(E)) = 1$, and define $\phi$ as the homomorphism from $E(\Q)$ to $E(\Z/N\Z)$ given by $P \mapsto P \bmod{N}$.

We begin by assuming $N \mid F_0(E, P) \cdots F_k(E,P) \text{ and } N \nmid F_0(E,P) \cdots F_{k-1}(E,P)$. By Lemma \ref{ek lemma}, $N \mid e_k$. So $2^k\phi(P) = \phi(2^kP) = (m_ke_k \bmod{N} : n_k \bmod{N} : e_k^3 \bmod{N}) = (0 : 1: 0).$ It follows that the order of $\phi(P)$ divides $2^k$. If $2^{k-1}\phi(P) = (0 : 1 : 0)$, then $N \mid m_{k-1}e_{k-1}$ and $N \mid {e^3_{k-1}}$. Since $N \mid {e^3_{k-1}}$ and $\gcd(m_{k-1},e_{k-1}) = 1$, it must be the case that $\gcd(N, m_{k-1}) = 1$. Thus $N \mid m_{k-1}e_{k-1}$ implies $N \mid e_{k-1}$. But by assumption, $N \nmid F_0(E,P) \cdots F_{k-1}(E,P) = e_{k-1}$, so we have a contradiction. Moreover, since $2^{k-1}P \neq (0 : 1 : 0)$, $2^s\phi(P) \neq (0 : 1 : 0)$ for any $s$ $\textless$ $k$. Hence $\phi(P)$ has order $2^k$.

Conversely, assume $\phi(P)$ has order $2^k$. Then $2^k\phi(P) = (0 : 1 : 0)$ and $2^{k-1}\phi(P) \neq (0 : 1 : 0)$. By a similar argument as above, $2^k\phi(P) = (0 : 1 : 0)$ implies $N \mid e_k = F_0(E,P) \cdots F_k(E,P)$. Furthermore, it follows from $2^{k-1}\phi(P) \neq (0 : 1 : 0)$ that $N \nmid m_{k-1}e_{k-1}$ or $N \nmid {e^3_{k-1}}$. In either case, $N \nmid e_{k-1}.$ Thus $N \nmid F_0(E, P) \cdots F_{k-1}(E, P)$, and we are done.

\end{proof}


If $N = p$ for some odd prime $p$, then the proof of Corollary \ref{elliptic fermat order universality prime corollary} follows naturally:

\begin{proof}

Since $p$ is prime, the assumptions $p \mid F_0(E, P) \cdots F_k(E, P)$ and $p \nmid F_0(E, P) \cdots F_{k-1}(E, P)$ imply $p \mid F_k(E, P)$. Conversely, if we suppose $p \mid F_k(E, P)$, then clearly $p \mid F_0(E, P) \cdots F_k(E, P)$. In addition, Theorem \ref{coprimality thm} tells us that $p \nmid F_i(E, P)$ for all $i \neq k$. So $p \nmid F_0(E, P) \cdots F_{k-1}(E, P)$. Hence $p \mid F_k(E, P)$ if and only if $p \mid F_0(E, P) \cdots F_k(E, P)$ and $p \nmid F_0(E, P) \cdots F_{k-1}(E, P)$. Applying Theorem \ref{elliptic fermat order universality thm} completes the proof. 

\end{proof}


\section{Recurrence}
\label{recurrence sec}

In this section, we will explore the recurrence relation given by Theorem \ref{recurrence thm}. Before looking further into the recurrence relation, we must have a better understanding of the sequence $\tau_k$. Recall that $\tau_kF_k=2\num(Y(2^{k-1}P))$.

\begin{proof}[Proof of Theorem \ref{tau def}]
The duplication formula gives us that $$X(2^{k}P) = \dfrac{m_{k-1}^4-2bm_{k-1}^2e_{k-1}^4-8cm_{k-1}e_{k-1}^6+b^2e_{k-1}^8-4ace_{k-1}^8}{4n_{k-1}^2e_{k-1}^2}.$$

It cannot be assumed that $X(2^{k}P)$ is reduced in this form; however, it can be seen that $e_k=\sqrt{\denom(X(2^kP))}$ divides $2n_{k-1}e_{k-1}$. 
 
Note that $\gcd(m_{k-1},e_{k-1})=1$, which implies that $\gcd(e_{k-1},\num(X(2^kP)))=1$ and thus $e_{k-1}\vert \sqrt{\denom(X(2^kP))}$. Hence $\dfrac{\sqrt{\denom(X(2^{k}P))}}{{e_{k-1}}}$ divides $2n_{k-1}$. Observe that $2n_{k-1}=2\num(Y(2^{k-1}))$ and $F_k(E,P)=\dfrac{e_k}{e_{k-1}}$, and thus $F_k(E,P)$ divides $2\num(Y(2^{k-1}P))$. Therefore, there exists some $\tau_k\in \mathbb{Z}$ such that $F_k\tau_k=2\num(Y(2^{k-1}P))$.
\end{proof}

\begin{corollary}
\label{tau gcd}
For all $k\ge 1$, we have $\tau_{k}^2=\gcd(A,B)$, where $A$ and $B$ are defined by \eqref{duplication A} and \eqref{duplication B}.
\end{corollary}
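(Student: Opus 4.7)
The plan is to exploit the fact that $A/B$ is the (unreduced) expression for the $x$-coordinate $X(2^k P) = m_k/e_k^2$, and that $m_k/e_k^2$ is already in lowest terms by the defining condition $\gcd(m_k, e_k) = 1$. Once that is pinned down, the corollary reduces to a one-line manipulation using the definition $F_k = e_k/e_{k-1}$ together with $\tau_k = 2n_{k-1}/F_k$ from Theorem~\ref{tau def}.

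Concretely, first I would set $d = \gcd(A,B)$ and observe that $A$ and $B$ are both integers (since $m_{k-1}, n_{k-1}, e_{k-1} \in \mathbb{Z}$ and $a,b,c \in \mathbb{Z}$), with $B = 4n_{k-1}^2 e_{k-1}^2 > 0$. By the duplication formula \eqref{duplication m n e} we have
\[
\frac{m_k}{e_k^2} \;=\; X(2^k P) \;=\; \frac{A}{B}.
\]
Next I would use $\gcd(m_k, e_k) = 1$, which forces $\gcd(m_k, e_k^2) = 1$, so that $m_k/e_k^2$ is in lowest terms. Standard reasoning about reduction of fractions then gives $A = d \cdot m_k$ and $B = d \cdot e_k^2$ (with $d$ positive, since $B$ and $e_k^2$ are positive).

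In particular, $d = B/e_k^2 = 4 n_{k-1}^2 e_{k-1}^2 / e_k^2$. Substituting $e_k = F_k e_{k-1}$ (Definition~\ref{elliptic fermat definition}) simplifies this to
\[
\gcd(A,B) \;=\; \frac{4n_{k-1}^2 e_{k-1}^2}{F_k^2 e_{k-1}^2} \;=\; \left(\frac{2n_{k-1}}{F_k}\right)^{\!2} \;=\; \tau_k^2,
\]
where the last equality is just the definition of $\tau_k$ from Theorem~\ref{tau def}.

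The only real subtlety is justifying that $m_k/e_k^2$ is the reduced form of $A/B$, i.e., that no further cancellation can occur beyond dividing out $\gcd(A,B)$. This follows cleanly from $\gcd(m_k,e_k)=1$, so I do not expect any genuine obstacle; the main thing to be careful about is the positivity of the gcd versus the possible signs of $A$ and $m_k$, which is handled by the fact that $B$ and $e_k^2$ are both manifestly positive, pinning down the sign of the common scaling factor.
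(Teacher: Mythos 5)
Your proof is correct and follows essentially the same route as the paper: both arguments identify $A/B$ with the reduced fraction $m_k/e_k^2$ via $\gcd(m_k,e_k)=1$, deduce $e_k^2 = B/\gcd(A,B)$, and then compute $\gcd(A,B) = 4n_{k-1}^2e_{k-1}^2/e_k^2 = (2n_{k-1}/F_k)^2 = \tau_k^2$. Your added care about signs and about why no further cancellation occurs is a point the paper leaves implicit, but it is the same argument.
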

\begin{proof}
Using the definition for $\tau_k$, we can see that
\begin{eqnarray}
\tau_k^2 &=&\dfrac{4(\num(Y(2^{k-1}P)))^2}{F_k(E,P)^2} \nonumber \\
 &=&\dfrac{4(\num(Y(2^{k-1}P)))^2\cdot e_{k-1}^{2}}{e_{k}^{2}}.\nonumber
\end{eqnarray}

Recall that $A=m_{k-1}^4-2bm_{k-1}^2e_{k-1}^4-8cm_{k-1}e_{k-1}^6+b^2e_{k-1}^8-4ace_{k-1}^8$ and  $B=4n_{k-1}^2e_{k-1}^2$. Then $\frac{m_{k}}{e_{k}^{2}} = X(2^{k} P) =\dfrac{A}{B}=\dfrac{\left(\dfrac{A}{\gcd(A,B)}\right)}{\left(\dfrac{B}{\gcd(A,B
)}\right)}$. Hence, $e_{k}^{2} = \frac{B}{\gcd(A,B)}$ and
\begin{eqnarray}
\tau_k^2 &=&\dfrac{4n_{k-1}^2e_{k-1}^2}{\left(\dfrac{4n_{k-1}^2e_{k-1}^2}{gcd(A,B)}\right)}=\gcd(A,B).\nonumber 
\end{eqnarray}
\end{proof}

We will now prove Theorem \ref{recurrence thm}. For now, keep in mind that we can explicitly calculate $\tau_k$ for all $k$; we will prove this at the end of the section. We can see that \eqref{Fk rec} comes directly from the definition of $\tau_k$ given in Section \ref{intro sec}. Now since \eqref{ek rec} was already proven as Lemma \ref{ek lemma}, we only need to show the correctness of \eqref{nk rec} and \eqref{mk rec}, which we will do in separate lemmas.

\begin{lemma}
\label{nk rec lemma}
Equation \eqref{nk rec} is correct.
\end{lemma}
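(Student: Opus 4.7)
The plan is to derive the formula for $n_k$ directly from the addition-law expression for the $y$-coordinate of a doubled point, and then translate to the integer-denominator quantities $m_{k-1}, n_{k-1}, e_{k-1}, m_k, e_k, F_k, \tau_k$.

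First I would record the standard tangent-line computation: if $2^{k-1}P = (x_{k-1}, y_{k-1})$ and $2^k P = (x_k, y_k)$, then the tangent line to $E$ at $2^{k-1}P$ has slope $\lambda = (3x_{k-1}^2 + 2a x_{k-1} + b)/(2 y_{k-1})$, and the group law gives $y_k = \lambda(x_{k-1} - x_k) - y_{k-1}$. Putting the right-hand side over the common denominator $2 y_{k-1}$ and eliminating $y_{k-1}^2$ via the curve equation $y_{k-1}^2 = x_{k-1}^3 + a x_{k-1}^2 + b x_{k-1} + c$ collapses the numerator to
\begin{equation*}
y_k \;=\; \frac{x_{k-1}^3 - b x_{k-1} - 3 x_{k-1}^2 x_k - 2 a x_{k-1} x_k - b x_k - 2c}{2 y_{k-1}}.
\end{equation*}
This is a clean, purely rational identity that does not yet involve any of the integer data.

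Next I would substitute $x_{k-1} = m_{k-1}/e_{k-1}^2$, $y_{k-1} = n_{k-1}/e_{k-1}^3$, $x_k = m_k/e_k^2$, and $y_k = n_k/e_k^3$, and then use $e_k = F_k \, e_{k-1}$ to rewrite every $e_k$ in terms of $F_k$ and $e_{k-1}$. Clearing the common denominator from the numerator of the displayed formula produces the six-term polynomial
\begin{equation*}
N \;=\; m_{k-1}^3 F_k^2 - b m_{k-1} e_{k-1}^4 F_k^2 - 3 m_{k-1}^2 m_k - 2a m_{k-1} m_k e_{k-1}^2 - b m_k e_{k-1}^4 - 2 c e_{k-1}^6 F_k^2,
\end{equation*}
divided by $F_k^2 e_{k-1}^6$, while the denominator $2 y_{k-1}$ becomes $2 n_{k-1}/e_{k-1}^3$. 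Cross-multiplying $n_k/(F_k^3 e_{k-1}^3) = N/(2 n_{k-1} F_k^2 e_{k-1}^3)$ and cancelling the common factors yields $n_k = N F_k /(2 n_{k-1})$.

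The final step is to invoke Theorem \ref{tau def}: by definition $\tau_k F_k = 2 n_{k-1}$, so $N F_k/(2 n_{k-1}) = N/\tau_k$, which is exactly \eqref{nk rec} after reordering the six terms of $N$. The only real work in all of this is the careful bookkeeping that collects powers of $e_{k-1}$ consistently when $e_k^2$ is replaced by $F_k^2 e_{k-1}^2$; I do not anticipate a conceptual obstacle, only the usual risk of a sign or exponent slip. The key conceptual ingredients — the duplication formula, the curve equation eliminating $y_{k-1}^2$, and the identity $2 n_{k-1} = \tau_k F_k$ — have all been established earlier in the paper.
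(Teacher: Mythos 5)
Your proposal is correct and follows essentially the same route as the paper: both obtain the $y$-coordinate duplication formula (you derive it from the tangent line and the curve equation, the paper cites Silverman), substitute $x_{k-1}=m_{k-1}/e_{k-1}^2$, $y_{k-1}=n_{k-1}/e_{k-1}^3$, $x_k=m_k/e_k^2$, and then simplify using $e_k=F_k e_{k-1}$ and $2n_{k-1}=\tau_k F_k$. The six-term numerator $N$ you obtain matches \eqref{nk rec} exactly, so no gap remains.
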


\begin{proof}
From the formulas given by Silverman \cite[p. 58-59]{Silverman}, we can see that
\begin{equation*}
Y(2^kP) = \dfrac{-2am_{k-1}m_ke_{k-1}^4 - bm_{k-1}e_{k-1}^4e_k^2 - bm_ke_{k-1}^6 - 2ce_{k-1}^6e_k^2 + m_{k-1}^3e_k^2 - 3m_{k-1}^2m_ke_{k-1}^2}{2n_{k-1}e_{k-1}^3e_k^2}.
\end{equation*}
Then since $Y(2^kP) = \dfrac{n_k}{e_k^3}$,
\begin{eqnarray*}
n_k & = & Y(2^kP) \cdot e_k^3 \\
 & = & \dfrac{-2am_{k-1}m_ke_{k-1}^4e_k^3 - bm_{k-1}e_{k-1}^4e_k - bm_ke_{k-1}^6e_k - 2ce_{k-1}^6e_k^3 + m_{k-1}^3e_k^3 - 3m_{k-1}^2m_ke_{k-1}^2e_k}{2n_{k-1}e_{k-1}^3}.
\end{eqnarray*}
Then using the previously established equations $2n_{k-1} = F_k\tau_k$ and $F_k = \dfrac{e_k}{e_{k-1}}$, we can simplify this to
\begin{equation*}
n_k(E,P) =  \dfrac{-2am_{k-1}m_ke_{k-1}^2 - bm_{k-1}e_{k-1}^4F_k^2 - bm_ke_{k-1}^4 - 2ce_{k-1}^6F_k^2 + m_{k-1}^3F_k^2 - 3m_{k-1}^2m_k}{\tau_k}.
\end{equation*}
\end{proof}

\begin{lemma}
\label{mk rec lemma}
Equation \eqref{mk rec} is correct.
\end{lemma}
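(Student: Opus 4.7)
The plan is to derive equation \eqref{mk rec} by combining the duplication formula \eqref{duplication m n e} with the definitions of $F_k$ and $\tau_k$, in much the same style as the proof of Lemma \ref{nk rec lemma}. The starting point is the identity
$$\frac{m_k}{e_k^2} = X(2^kP) = \frac{A}{B} = \frac{m_{k-1}^4 - 2bm_{k-1}^2e_{k-1}^4 - 8cm_{k-1}e_{k-1}^6 + b^2e_{k-1}^8 - 4ace_{k-1}^8}{4n_{k-1}^2 e_{k-1}^2},$$
so that $m_k = A \cdot e_k^2 / B$.

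Next, I would rewrite the denominator $B$ using the two already-established identities $e_k = e_{k-1} F_k$ (from the definition of $F_k$) and $2n_{k-1} = F_k \tau_k$ (from Theorem \ref{tau def} and the definition of $\tau_k$). This gives
$$B = 4n_{k-1}^2 e_{k-1}^2 = (2n_{k-1})^2 e_{k-1}^2 = F_k^2 \tau_k^2 e_{k-1}^2,$$
while $e_k^2 = F_k^2 e_{k-1}^2$. Substituting these expressions into $m_k = A \cdot e_k^2 / B$ yields
$$m_k = \frac{A \cdot F_k^2 e_{k-1}^2}{F_k^2 \tau_k^2 e_{k-1}^2} = \frac{A}{\tau_k^2},$$
which is exactly equation \eqref{mk rec}.

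There is no real obstacle here; the argument is a direct manipulation parallel to that of Lemma \ref{nk rec lemma}, and all the ingredients (the duplication formula, the definition of $F_k$, and the integrality of $\tau_k$ from Theorem \ref{tau def}) are already in hand. The only subtlety worth a brief remark is that the factor $\tau_k^2$ is precisely the common factor that must be cancelled between $A$ and $B$ to write $X(2^kP)$ in lowest terms, which is guaranteed by Corollary \ref{tau gcd}; this is what ensures that the right-hand side of \eqref{mk rec} is an integer equal to $m_k$, rather than merely a rational number with the correct value of $m_k/e_k^2$.
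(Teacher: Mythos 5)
Your proposal is correct and is essentially the paper's argument: the paper simply cites Corollary \ref{tau gcd} (that $\tau_k^2 = \gcd(A,B)$) to cancel the common factor, while you re-derive that same cancellation inline from the identities $e_k = e_{k-1}F_k$ and $2n_{k-1} = F_k\tau_k$ — which is exactly the computation underlying that corollary. Your closing remark correctly identifies Corollary \ref{tau gcd} as the reason the quotient $A/\tau_k^2$ is the integer $m_k$ rather than just a rational number with the right value.
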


\begin{proof}
Recall that the duplication formula \eqref{duplication m n e} for the $x$-coordinate of the $2^kP$ is given as follows:

\begin{equation*}
X(2^kP) = \dfrac{m_{k-1}^4-2bm_{k-1}^2e_{k-1}^4-8cm_{k-1}e_{k-1}^6+b^2e_{k-1}^8-4ace_{k-1}^8}{4e_{k-1}^2n_{k-1}^2}.
\end{equation*}

Then by Corollary \ref{tau gcd}, $\tau_k$ is the gcd of the numerator and denominator in this equation, we have that
\begin{equation*}
m_k(E,P) = \dfrac{m_{k-1}^4-2bm_{k-1}^2e_{k-1}^4-8cm_{k-1}e_{k-1}^6+b^2e_{k-1}^8-4ace_{k-1}^8}{\tau_k^2}.
\end{equation*}
\end{proof}

We can now see that the recurrence relation is correct, thus proving Theorem \ref{recurrence thm}. Now we just need a better understanding of $\tau_k$ in order to show that we can calculate $\tau_k$ for all $k$.

First we will look at the relationship between the $\tau_k$ sequence and the discriminant of an elliptic curve. We can do this by looking at the discriminant of the cubic.

 Silverman and Tate \cite[p.56]{SilvermanTate} define the discriminant of the cubic as $D=-4a^3c+a^2b^2+18abc-4b^3-27c^2$. Note that the discriminant of an elliptic curve $\Delta(E)$ is $16D$.

\begin{lemma}
\label{disc of cubic lemma}
The number $\tau_k^2\vert \frac{\Delta(E)}{4}$.
\end{lemma}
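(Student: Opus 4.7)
The plan is to leverage the non-singularity of $E$ (equivalently, $D \neq 0$) to bound $\gcd(A,B)$ by $4D$, using a Bezout-type identity for the doubling polynomial $g(u) = u^4 - 2bu^2 - 8cu + b^2 - 4ac$ and the cubic $f(u) = u^3 + au^2 + bu + c$ that underlies $E$.

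First, by Corollary \ref{tau gcd}, $\tau_k^2 = \gcd(A,B)$. Setting $C(x,y) = x^3 + ax^2y^2 + bxy^4 + cy^6$, relation \eqref{n^2 eq} gives $B = 4e_{k-1}^2\, C(m_{k-1},e_{k-1})$, and reducing $A$ modulo $e_{k-1}$ yields $A \equiv m_{k-1}^4 \pmod{e_{k-1}}$, whence $\gcd(A, e_{k-1}) = 1$ by $\gcd(m_{k-1}, e_{k-1})=1$. So it will suffice to produce any identity of the form
\[ U(x,y)\,A(x,y) + V(x,y)\,B(x,y) = 4D\cdot y^N \text{ in } \Z[x,y]; \]
evaluating at $(x,y) = (m_{k-1}, e_{k-1})$ and canceling the factor $e_{k-1}^N$ against the coprimality $\gcd(A, e_{k-1})=1$ would yield $\tau_k^2 \mid 4D = \Delta(E)/4$.

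To build this identity, I would first verify by direct expansion the homogeneous relation
\[ A(x,y) = F'(x,y)^2 - (8x + 4ay^2)\, C(x,y), \]
where $F'(x,y) = 3x^2 + 2axy^2 + by^4$. This is the homogenization (with $x$ of weight $2$ and $y$ of weight $1$) of the classical tangent-line identity $g(u) = f'(u)^2 - 4(a+2u)\, f(u)$, which comes from writing the slope of the tangent at $(u, y)$ as $f'(u)/(2y)$. Next, since $D = \text{Res}(f, f')$ is nonzero, there exist $\phi(u), \psi(u) \in \Z[u]$ with $\phi f + \psi f' = D$; homogenizing gives $\phi_h\, C + \psi_h\, F' = D\cdot y^M$ in $\Z[x,y]$. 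Squaring this Bezout relation and using $F'^{\,2} = A + (8x+4ay^2)\,C$ to eliminate $F'^{\,2}$, then collecting terms, produces an identity of the target shape. The factor of $4$ in the $4D$ on the right emerges from the explicit $4$-coefficient in the tangent-line identity combined with the $4e_{k-1}^2$ already built into $B$.

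The main obstacle is tracking this factor of $4$: the naive square-and-substitute route readily gives only $\tau_k^2 \mid D^2$, and tightening the bound to $\tau_k^2 \mid 4D$ requires careful bookkeeping of powers of $2$ through the manipulation. A concrete alternative that sidesteps this is to run the extended Euclidean algorithm directly on $g(u)$ and $4f(u)$ in $\Z[u]$; the algorithm terminates with an explicit Bezout identity $U(u)\,g(u) + V(u)\cdot 4f(u) = 4D$, which homogenizes to $U(x,y)\,A + V(x,y)\,B = 4D\cdot y^N$ and completes the argument.
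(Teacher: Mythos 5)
Your overall reduction is sound and matches the paper's: both arguments boil down to exhibiting an integer-polynomial identity $U\cdot A + V\cdot B = 4D\cdot e_{k-1}^{N}$, then using $\tau_k^2 = \gcd(A,B)$ (Corollary \ref{tau gcd}) together with $\gcd(A,e_{k-1})=1$ to conclude $\tau_k^2 \mid 4D = \Delta(E)/4$. Your homogenized tangent-line identity $A = F'^{\,2} - (8x+4ay^2)C$ and the observation $B = 4e_{k-1}^2\,C(m_{k-1},e_{k-1})$ are both correct.

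The gap is that the crucial identity is never actually established. You correctly diagnose that squaring the Bezout relation $\phi f + \psi f' = D$ and substituting $F'^{\,2} = A + (8x+4ay^2)C$ only places $D^2$ in the ideal $(f,g)\subseteq\Z[u]$, giving the weaker bound $\tau_k^2\mid 4D^2$. But your proposed repair --- ``run the extended Euclidean algorithm on $g$ and $4f$; it terminates with $U g + V\cdot 4f = 4D$'' --- is an unjustified assertion, not a proof. There is no general principle guaranteeing that the constant produced by the (fraction-free) Euclidean algorithm is $4D$: resultant/subresultant theory only guarantees that $\operatorname{Res}(f,g) = \pm\operatorname{Res}(f,f')^2 = \pm D^2$ lies in $(f,g)\cap\Z$, which is exactly the bound you already had. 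The fact that $D$ itself (not merely $D^2$) lies in $(f,g)$ is a genuinely stronger statement, and it is precisely the content of the explicit identity $D = f(x)F(x) + \phi(x)\Phi(x)$ with $F(x) = 3x^3 - ax^2 - 5bx + 2ab - 27c$ and $\Phi(x) = -3x^2 - 2ax + a^2 - 4b$, which the paper quotes from Silverman and Tate and which carries the entire arithmetic weight of the lemma. To close your argument you must either cite this identity or verify it by direct expansion; as written, your proof establishes only $\tau_k^2 \mid 4D^2$.
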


\begin{proof}
Let $f(x)=x^{3}+ax^2+bx+c$, $F(x)=3x^3-ax^2-5bx+2ab-27c$, $\phi(x)=x^4-2bx^2-8cx+b^2-4ac$, and $\Phi(x)=-3x^2-2ax+a^2-4b$. Silverman and Tate \cite[p.62]{SilvermanTate} show us that $D=f(x)F(x)+\phi(x)\Phi(x)$. Plugging in our $X(2^{k-1}P)$, we observe that $$De_{k-1}^{12}=\left(e_{k-1}^6f\left(\frac{m_{k-1}}{e_{k-1}^2}\right)\right)\cdot \left(e_{k-1}^6F\left(\frac{m_{k-1}}{e_{k-1}^2}\right)\right)+\left(e_{k-1}^4\Phi\left(\frac{m_{k-1}}{e_{k-1}^2}\right)\right)\cdot \left(e_{k-1}^8\phi\left(\frac{m_{k-1}}{e_{k-1}^2}\right)\right).$$

Recall that $\tau_k^2=\gcd(A,B)$ where $A$ and $B$ are given by \eqref{duplication A} and \eqref{duplication B}. Note that  $e_{k-1}^8\phi\left(\dfrac{m_{k-1}}{e_{k-1}^2}\right)=A$ and $4e_{k-1}^2\cdot\left(e_{k-1}^6f\left(\dfrac{m_{k-1}}{e_{k-1}^2}\right)\right)=B$. Multiplying through by $4e_{k-1}^2$, we see that $\tau_{k}^2\vert 4De_{k-1}^{14}$.

Since  $\gcd(A,e_{k-1})=1$ and $e_{k-1}\vert B$, it can be seen that $\gcd(\tau_k^2,e_{k-1})=\gcd(\gcd(A,B),e_{k-1})=\gcd(A,\gcd(B,e_{k-1}))=\gcd(A,e_{k-1})=1$. It follows that $\tau_k^2\vert 4De_{k-1}^{14}$ implies that $\tau_k^2\vert 4D$ and $4D=\frac{\Delta(E)}{4}$. 

\end{proof}

In addition to being connected to the discriminant, the $\tau_k$ sequence is related to how points on the elliptic curve reduce mod a prime.

\begin{theorem}
\label{singular point 1 thm}
Suppose that $p\vert \tau_k$ and $p$ is an odd prime. Then $2^{k-1}P$ reduces to a singular point mod $p$ with $Y(2^{k-1}P)\equiv 0 \pmod{p}$.
\end{theorem}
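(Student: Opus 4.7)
The plan is to show that the reduction of $2^{k-1}P$ modulo $p$ satisfies both defining equations of a singular point of the reduced cubic, namely that the $Y$-coordinate vanishes and that the derivative $f'(x) = 3x^2 + 2ax + b$ vanishes at the reduced $X$-coordinate. Writing $f(x) = x^3 + ax^2 + bx + c$, recall from Corollary \ref{tau gcd} that $\tau_k^2 = \gcd(A,B)$ where $A = e_{k-1}^8\phi(x_{k-1})$ and $B = 4e_{k-1}^6 f(x_{k-1}) \cdot e_{k-1}^2 = 4n_{k-1}^2e_{k-1}^2$.

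First I would handle the $Y$-coordinate. Since $p \mid \tau_k$, we have $p^2 \mid \tau_k^2 \mid B = 4n_{k-1}^2 e_{k-1}^2$. Because $p$ is odd, this forces $p \mid n_{k-1}e_{k-1}$. The proof of Lemma \ref{disc of cubic lemma} shows $\gcd(\tau_k^2, e_{k-1}) = 1$, so $p \nmid e_{k-1}$, and therefore $p \mid n_{k-1}$. Since $\gcd(n_{k-1}, e_{k-1}) = 1$ and $p \nmid e_{k-1}$, the reduction of $Y(2^{k-1}P) = n_{k-1}/e_{k-1}^3$ modulo $p$ is $0$.

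Next I would verify singularity via the polynomial identity
$$\phi(x) = f'(x)^2 - 4f(x)(a + 2x),$$
where $\phi(x) = x^4 - 2bx^2 - 8cx + b^2 - 4ac$. This identity follows by direct expansion, or equivalently from the derivation of the duplication formula (since $X(2P) = \lambda^2 - a - 2x$ with $\lambda = f'(x)/(2y)$ and $y^2 = f(x)$ yield $\phi(x)/(4f(x)) = f'(x)^2/(4f(x)) - (a + 2x)$). From $p \mid \tau_k^2$ we know $p \mid A = e_{k-1}^8\phi(x_{k-1})$ and $p \mid B = 4e_{k-1}^8 f(x_{k-1})$; since $p$ is odd and $p \nmid e_{k-1}$, both $\phi(x_{k-1})$ and $f(x_{k-1})$, viewed in the integer form obtained by clearing denominators, are divisible by $p$. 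The identity then gives $p \mid f'(x_{k-1})^2$, and hence $p \mid f'(x_{k-1})$ because $p$ is prime.

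Finally, I would assemble the conclusion: the reduction of $2^{k-1}P$ is the affine point $(\bar{x}_{k-1}, 0) \in \bar{E}(\F_p)$ where $\bar{x}_{k-1} = m_{k-1}\bar{e}_{k-1}^{-2}$. At this point both partial derivatives of $y^2 - f(x)$ vanish (since $p$ is odd, $\partial/\partial y = 2y \equiv 0$ and $\partial/\partial x = -f'(x) \equiv 0$), so it is a singular point of the reduced cubic. The main obstacle is simply producing the identity $\phi = (f')^2 - 4f(a+2x)$ cleanly; once that is visible, the rest is bookkeeping using $\gcd(\tau_k^2, e_{k-1}) = 1$ from the previous lemma.
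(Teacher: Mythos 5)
Your proposal is correct and follows essentially the same route as the paper: deduce $p\mid n_{k-1}$ and $p\nmid e_{k-1}$ so that $Y(2^{k-1}P)\equiv 0$, then use the identity $\phi(x)=f'(x)^2-4(2x+a)f(x)$ together with $p\mid A$ and $p\mid f(x_{k-1})$ to get $f'(x_{k-1})\equiv 0$. The only cosmetic difference is that you verify singularity with the two affine partials of $y^2-f(x)$ (which suffices, by Euler's relation, for an affine point on the curve), whereas the paper also checks $\partial F/\partial z$ explicitly in projective coordinates.
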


\begin{proof}
For the remainder of this proof, $2^{k-1}P=(x,y)\pmod{p}$ will refer to the point when it has been reduced mod $p$.
Recall that $\tau_k F_k=2\num(Y(2^{k-1}P))$, and thus $p\vert 2\num(Y(2^{k-1}P))$. And $p\nmid 2$ because $p$ is an odd prime, in which case $p\vert \num(Y(2^{k-1}P))$. This tells us that $y\equiv 0 \pmod{p}.$

Let $F(x,y,z)=y^2z-x^3-ax^2z-bxz^2-cz^3$. Recall that singular points occur when $F=\frac{\partial F}{\partial x}=\frac{\partial F}{\partial y}=\frac{\partial F}{\partial z}=0$. Observe 
\begin{align*}
  \frac{\partial F}{\partial x}&= -3x^2-2azx-bz^2, \quad
\frac{\partial F}{\partial y} = 2yz\\
\frac{\partial F}{\partial z}&= y^2-ax^2-2bxz-3cz^2.\\
\end{align*}

Note that $\frac{\partial F}{\partial y}=0$ because $y\equiv 0 \pmod{p}$. Also, $z=1$ or $z=0$, but in this case $z=1$ because otherwise $\gcd(n_{k-1},e_{k-1})>1$, which would be a contradiction. 

It can be seen that $y^2=f(x)\equiv 0 \pmod{p}$. Observe that $F(x,y,z)=-f(x)$ and so $F(x,y,z)\equiv 0 \pmod{p}$.

Let $A$ and $B$ be the unreduced numerator and denominator of $X(2^kP)$ as defined in \eqref{duplication A} and \eqref{duplication B}. Since $p\vert \tau_k$ and $\tau_{k}^2\vert A$, then $p\vert A$. Note that $A\equiv f'(x)^2-(8x+4a)(f(x))\pmod{p}$ and $p\vert f(x)$. Therefore it must be the case that $p\vert f'(x)$. Since $\frac{\partial F}{\partial x}=-f'(x)$, $\frac{\partial F}{\partial x}\equiv 0 \pmod p$.

Now, setting $F(x,y,z)$ and $\frac{\partial F}{\partial x}$ equal to $0$, we can solve for $b$ and $c$. We find that $b=-3x^2-2ax$ and $c=-x^3-ax^2-bx$.  Thus substituting shows us that $\frac{\partial F}{\partial z}= -ax^2-2bx-3c=6x^3+3x^3-9x^3-ax^2+4ax^2+3ax^2-6ax^2=0.$

In summary, we know that $y\equiv 0\pmod{p}$ and thus $\frac{\partial F}{\partial y}=0$, which tells us that both $F(x,y,z)=0$ and $\frac{\partial F}{\partial x}=0$, which implies that $\frac{\partial F}{\partial z}=0$. Therefore $2^{k-1}P$ reduces to a singular point $\mod{p}$.
\end{proof}

 We can also look at a partial converse of this theorem. Although it requires an extra condition, it allows us to make conclusions about what each $\tau_k$ is based on which points on the curve reduce to singular points mod a prime.

\begin{theorem}
\label{singular point partial converse thm}
Let $p$ be an odd prime. Suppose that $2^{k-1}P$ and $2^kP$ both reduce to singular points mod $p$. Then $p\vert \tau_k$.
\end{theorem}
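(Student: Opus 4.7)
The plan is to reduce everything to the identity $\tau_k F_k = 2n_{k-1}$, which was established in the proof of Theorem \ref{tau def}. The hypothesis that $2^{k-1}P$ reduces to a singular point mod $p$ should immediately give divisibility by $p$ of the right-hand side, and then the second hypothesis (that $2^kP$ also reduces to a singular point) will be what forces the divisibility to land on $\tau_k$ rather than on $F_k$.

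First I would observe that since $2^{k-1}P$ reduces to an honest affine singular point, it cannot reduce to $(0:1:0)$, so $p \nmid e_{k-1}$. A singular point on $y^2 = x^3 + ax^2 + bx + c$ mod an odd prime $p$ satisfies $\partial F/\partial y = 2yz \equiv 0 \pmod p$, and in the affine chart $z = 1$ this forces $y \equiv 0 \pmod p$. Combined with $p \nmid e_{k-1}$, this gives $p \mid n_{k-1}$. Plugging into $\tau_k F_k = 2 n_{k-1}$ and using that $p$ is odd yields $p \mid \tau_k F_k$, so $p \mid \tau_k$ or $p \mid F_k$.

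Next I would rule out the case $p \mid F_k$. Here I invoke the second hypothesis: $2^kP$ also reduces to a singular point, and in particular not to $(0:1:0)$, so $p \nmid e_k$. But by Lemma \ref{ek lemma}, $F_k \mid e_k$, so if $p$ divided $F_k$ it would also divide $e_k$, a contradiction. Therefore $p \nmid F_k$, and we conclude $p \mid \tau_k$.

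There is no real obstacle here; the subtlety is simply understanding why the extra hypothesis on $2^kP$ is needed. Without it, the argument $p \mid \tau_k F_k$ leaves open the alternative $p \mid F_k$, which corresponds precisely to the case where doubling the singular reduction of $2^{k-1}P$ sends it to the identity in $E(\mathbb{F}_p)$ rather than to another singular point. The second hypothesis excludes exactly this case.
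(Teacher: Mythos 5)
Your proof is correct and follows essentially the same route as the paper: deduce $y \equiv 0 \pmod p$ from the singularity of the reduction of $2^{k-1}P$ (via $\partial F/\partial y = 2yz$ and $z=1$), conclude $p \mid 2n_{k-1} = \tau_k F_k$, and then use the singularity of the reduction of $2^kP$ to get $p \nmid e_k$, hence $p \nmid F_k$, forcing $p \mid \tau_k$. Your closing remark correctly identifies why the second hypothesis is needed.
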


\begin{proof}
 We have $2^{k-1}P\equiv(x,y)\pmod{p}$ for some $x,y\in \mathbb{Z}$. If $2^{k-1}P$ is singular than we know that $F=\frac{\partial F}{\partial x}=\frac{\partial F}{\partial y}=\frac{\partial F}{\partial z}=0$, where these are the equations from the previous proof.

Again, we know that  $z=1$ and so $y\equiv 0\pmod{p}$, since $\frac{\partial F}{\partial y}=0$. Thus the remaining equations can be rewritten as follows.

\begin{align*}
F(x,y,z)&=&-x^3-ax^2-bx-c, \quad
\frac{\partial F}{\partial x}&=& -3x^2-2ax-b, \quad
\frac{\partial F}{\partial z}&=& -ax^2-2bx-3c.
\end{align*}

Because $2^{k-1}P$ is a singular point, $F(x,y,z)=-f(x)\equiv 0 \pmod{p}$ and $\frac{\partial F}{\partial x}=-f'(x)\equiv 0\pmod{p}$. Thus $f(x)=y^2\equiv 0 \pmod{p}$ in which case $y\equiv 0 \pmod{p}$. 
 So $2\num(Y(2^{k-1}P))\equiv 0 \pmod{p}$, and it follows that $\tau_k F_k\equiv 0 \pmod{p}$. Therefore $\tau_k\equiv 0 \pmod{p}$ or $F_k\equiv 0 \pmod{p}$.

Since $2^{k}P\not\equiv (0:1:0)\pmod{p}$, then $p\nmid e_k$ and so $p\nmid F_k$. Hence $p\vert \tau_k$.

\end{proof}

In addition to looking at $\tau_k$ by examining different aspects of an elliptic curve and its points, we can learn more about $\tau_k$ by considering its parity, which can in turn tell us a little more about elliptic Fermat sequences.
\begin{theorem}
\label{odd Fk}
If $2^{k}P\not\equiv (0:1:0) \pmod{2}$ then $F_k(E,P)$ is odd and $\tau_k$ is even.
\end{theorem}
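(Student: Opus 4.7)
The plan is to read off both conclusions directly from the structural facts already established: the description of which points reduce to $(0{:}1{:}0)$ in terms of the denominator $e_k$, the factorization $e_k = F_0 F_1 \cdots F_k$ from Lemma \ref{ek lemma}, and the identity $\tau_k F_k = 2n_{k-1}$ from Theorem \ref{tau def}. I do not expect any genuine obstacle here; the argument is a short two-step deduction.

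For the first claim, I would recall from the Background section that for a point $Q = (me : n : e^3) \in E(\Q)$, the reduction $Q \bmod p$ equals the identity $(0:1:0)$ if and only if $p \mid e$. Applying this with $Q = 2^k P$ and $p = 2$, the hypothesis $2^k P \not\equiv (0:1:0) \pmod 2$ becomes $2 \nmid e_k$. By Lemma \ref{ek lemma}, $e_k = F_0(E,P) \cdots F_k(E,P)$, so $2 \nmid F_k(E,P)$; that is, $F_k(E,P)$ is odd.

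For the second claim, Theorem \ref{tau def} gives $\tau_k F_k(E,P) = 2 n_{k-1}$. The left-hand side is even, and since we have just shown $F_k(E,P)$ is odd, the factor of $2$ must lie in $\tau_k$. Hence $\tau_k$ is even, completing the proof. The only thing one must be mildly careful about is that the relation $\tau_k F_k = 2n_{k-1}$ is an equality of integers (which follows from Theorem \ref{tau def}), so parity arguments apply without issue.
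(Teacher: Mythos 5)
Your proof is correct and follows essentially the same route as the paper: both deduce $2 \nmid e_k$ from the hypothesis, conclude $F_k$ is odd (you via Lemma \ref{ek lemma}, the paper via $2\nmid e_k$ and $2\nmid e_{k-1}$ together with $F_k = e_k/e_{k-1}$, which amounts to the same thing), and then extract the evenness of $\tau_k$ from the integer identity $\tau_k F_k = 2n_{k-1}$. The only nitpick is a slip of wording where you call $\tau_k F_k$ "the left-hand side is even" — you mean the right-hand side $2n_{k-1}$ is manifestly even, hence so is the product — but the logic is sound.
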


\begin{proof}
Suppose that $2^{k}P\not\equiv (0:1:0) \pmod{2}$. Thus $2\nmid e_{k}$ and $2\nmid e_{k-1}$ and therefore $F_k$ is odd. Recall that $F_k \tau_k=2(\num(Y(2^{k-1}P))$, so $\tau_k$ must be even.
\end{proof}

The case in which $F_k$ is even is a little more complicated than the previous case, but the parity of $\tau_k$ can still be determined by looking at one extra condition.

\begin{lemma}
\label{even Fk}
If $2^{k}P\equiv (0:1:0) \pmod{2}$ then $F_k(E,P)$ is even. If in addition $2^{k-1}P\equiv (0:1:0)\pmod{2}$, then $\tau_k$ is odd.
\end{lemma}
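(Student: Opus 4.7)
The plan is to prove the two assertions in reverse order, since the parity of $\tau_k$ controls the parity of $F_k$ in the non-trivial case.

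For the $\tau_k$-odd claim, I would begin from the hypothesis $2 \mid e_{k-1}$. Since $\gcd(m_{k-1}, e_{k-1}) = 1$, this forces $m_{k-1}$ to be odd, so the leading term $m_{k-1}^4$ of the numerator $A$ in the duplication formula \eqref{duplication A} is odd. Each of the remaining four terms carries either an explicit factor of $2$ (from the coefficients $2b$, $8c$, $4ac$) or an even power of the now-even $e_{k-1}$, so they are all even. Hence $A \equiv 1 \pmod{2}$. Corollary \ref{tau gcd} tells us $\tau_k^2 = \gcd(A, B)$, so $\tau_k^2 \mid A$, forcing $\tau_k^2$, and therefore $\tau_k$, to be odd.

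For the $F_k$-even claim, assume $2 \mid e_k$ and split on the parity of $e_{k-1}$. If $e_{k-1}$ is odd, the identity $e_k = e_{k-1} F_k$ from Lemma \ref{ek lemma} immediately puts the factor of $2$ into $F_k$. If $e_{k-1}$ is even, the previous paragraph gives $\tau_k$ odd; combining the recurrence $F_k \tau_k = 2 n_{k-1}$ from Equation \eqref{Fk rec} with the fact that $n_{k-1}$ is odd (again by $\gcd(n_{k-1}, e_{k-1}) = 1$), we read off $\ord_2(F_k) = 1$, so $F_k$ is even.

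The only real content is the parity computation of $A$ together with the divisibility $\tau_k^2 \mid A$; everything else is bookkeeping using identities already established. I do not anticipate needing the formal group at $p=2$ or any fact about the reduction map in characteristic $2$, since $\tau_k^2 = \gcd(A, B)$ already supplies all of the $2$-adic information we need.
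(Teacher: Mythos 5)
Your proposal is correct and follows essentially the same route as the paper: the paper also splits on the parity of $e_{k-1}$, handles the odd case via $e_k = e_{k-1}F_k$, and in the even case observes that $m_{k-1}$ is odd so that $A$ is odd, whence $\tau_k^2 = \gcd(A,B)$ is odd. The only cosmetic difference is that you extract $\ord_2(F_k)=1$ from $F_k\tau_k = 2n_{k-1}$ while the paper phrases it as $\ord_2(e_k)=\ord_2(e_{k-1})+1$; these are equivalent.
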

\begin{proof}

Suppose that $2^{k}P\equiv (0:1:0) \pmod{2}$. This tells us that $2\vert e_k$. 

\textbf{Case I}: Suppose that $2^{k-1}P\not\equiv (0:1:0)\pmod{2}$. Then $2\nmid e_{k-1}$, and $F_k$ is even as $2\vert e_k$ but $2\nmid e_{k-1}$.

\textbf{Case II}:  Suppose that $2^{k-1}P\equiv (0:1:0)\pmod{2}$. Thus $2\vert e_{k-1}$ and $2\nmid n_{k-1}$ and $2\nmid m_{k-1}$. Looking at the duplication formula it can seen that $\gcd(A,B)$ must be odd as $m_{k-1}$ is odd and thus $A$ is odd and $B$ is even. It follows that $\tau_k$ is odd. Thus $\ord_2(e_k)= \ord_2(e_{k-1})+1$. Hence $F_k$ must be even.

Therefore $F_k$ is always even when $2^kP\equiv (0:1:0)\pmod{2}$.
\end{proof}

Combining the past few theorems and lemmas, we can make a nice conclusion about the relationship between the parity of the $\tau_k$ and $F_k$ sequences. Examining this relationship may help us to understand both sequences with more clarity and could lead to a simplified recurrence relation.

\begin{corollary}
The numbers $\tau_k$ and $F_k$ have opposite parity unless $2^{k}P$ reduces to the point at infinity mod $2$ and $2^{k-1}P\not\equiv(0:1:0)\pmod{2}$.
\end{corollary}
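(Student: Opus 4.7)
The plan is to prove this by a short case analysis on the reductions of $2^kP$ and $2^{k-1}P$ modulo $2$, invoking Theorem \ref{odd Fk} and Lemma \ref{even Fk} in each case. There are three relevant configurations: (a) $2^kP \not\equiv (0:1:0) \pmod 2$; (b) $2^kP \equiv (0:1:0) \pmod 2$ and $2^{k-1}P \equiv (0:1:0) \pmod 2$; and (c) $2^kP \equiv (0:1:0) \pmod 2$ but $2^{k-1}P \not\equiv (0:1:0) \pmod 2$. Case (c) is precisely the exceptional situation described in the statement, so the task reduces to verifying opposite parity in cases (a) and (b).

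In case (a), Theorem \ref{odd Fk} immediately yields $F_k$ odd and $\tau_k$ even, giving opposite parity. In case (b), Case II of Lemma \ref{even Fk} applies: it states that $\tau_k$ is odd while $F_k$ is even (indeed $\ord_2(e_k) = \ord_2(e_{k-1}) + 1$), so again the parities are opposite. Stringing these two conclusions together establishes the corollary, with the behavior in case (c) explicitly carved out as the exception.

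The argument is essentially a bookkeeping assembly of results already proved, so there is no real obstacle; the only thing requiring minor care is making sure the three cases above are exhaustive and that the hypothesis of each invoked lemma is matched correctly. In particular, one should observe that case (c) genuinely cannot be pinned down: Lemma \ref{even Fk} tells us $F_k$ is even there, but the parity of $\tau_k$ is not forced, which is why the statement must exclude that configuration rather than assert opposite parity universally.
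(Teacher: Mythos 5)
Your proof is correct and is exactly the argument the paper intends: the corollary is stated as an immediate consequence of Theorem \ref{odd Fk} and Lemma \ref{even Fk}, and your three-way case split (with the exceptional case carved out) is precisely how those two results combine. No issues.
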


While it is nice to know all of these properties, we need to know exactly what $\tau_k$ is in order for the recurrence relations to be useful. In accordance with Theorem \ref{tau alg thm}, we can calculate $|\tau_k|$ for all but finitely many $k$ using the following algorithm:

\begin{enumerate}

\item Find and factor the discriminant $\Delta(E)$.

\item For each prime $p$ such that $p^2|\Delta(E)$, complete the following:

	\begin{enumerate}
	
	\item Find the smallest $\ell \in \mathbb{Z}^+$ such that $\ell P \equiv (0:1:0) \pmod{p}$.
	
	\item \label{tau alg case1} If $\ell$ is a power of 2, then $\ord_p(\tau_k) = 0$ for all $k \geq \ell + 1$.
	
		\begin{enumerate}
		
		\item Move on to the next $p^2|\Delta(E)$.		
		
		\end{enumerate}
	
	\item \label{tau alg case2} If $\ell$ is not a power of 2, then $\ord_p(\tau_k) = \ord_p(2num(Y(2^{k-1}P)))$.
	
		\begin{enumerate}
		
		\item \label{tau alg periodic step}Find some $r \in \mathbb{Z}^+$ such that $rP = \left( \dfrac{m}{e^2}	, \dfrac{n}{e^3} \right)$ with $p^s | e$. Choose $s$ such that either $p^{2s} || \Delta(E)$ or $p^{2s+1} || \Delta(E)$.
		
		\item \label{tau alg lowest ks} Now $\ord_p(Y(tP))$ depends only on $t \mod{r}$. Find all possible values of $2^k \mod{r}$ and note the lowest $k$ which generates each value.
		
		\item Calculate $\ord_p(Y(2^{k-1}P))$ for each $k$ noted in \ref{tau alg lowest ks}. Use this to calculate $\ord_p(\tau_k)$.
		
		\item Move on to the next $p^2|\Delta(E)$.
		
		\end{enumerate}
	
	\end{enumerate}
	
\item \label{tau alg final step} We now know $\ord_p(\tau_k)$ for all (but finitely many, in some cases) $k$ for each $p$ such that $p^2|\Delta(E)$, which are all the $p$ that could divide $\tau_k$. Use this to calculate $|\tau_k|$.

\end{enumerate}

The finitely many $\tau_k$ that this algorithm cannot compute will be at the beginning of the $\tau_k$ sequence, so they can be computed from the definition of $\tau_k$ using finitely many calculations.

Note that sometimes it is difficult to find $r$ in step \ref{tau alg periodic step}, as this step requires being able to add points on the curve, which can not always be done efficiently. If a smaller $s$ is chosen in order to find an $r$, this algorithm can still show that $\tau_k$ is eventually periodic.

Now we will prove that this algorithm is correct. In order to do this, we must first prove the following theorem:

\begin{theorem}
\label{P+Q mod p^k thm}
Let $E: y^2 = x^3 + ax^2 + bx + c$ be an elliptic curve. Assume $Q,R \in E(\mathbb{Q})$ are such that
\begin{equation*}
Q  = (x_1,y_1) = \left( \dfrac{m_1}{e_1^2}, \dfrac{n_1}{e_1^3} \right) \text{, } p\nmid e_1 \text{; }R  = (x_2,y_2) = \left( \dfrac{m_2}{e_2^2}, \dfrac{n_2}{e_2^3} \right) \text{, } p^k\mid \mid e_2.
\end{equation*}
Let $$Q+R = (x_3,y_3) = \left( \dfrac{m_3}{e_3^2}, \dfrac{n_3}{e_3^3} \right).$$
Then
\begin{equation*}
X(Q+R) \equiv X(Q) \pmod{p^k} \text{, } Y(Q+R) \equiv Y(Q) \pmod{p^k}
\end{equation*}
\end{theorem}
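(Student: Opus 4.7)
The intuition is that $R$, written projectively as $(m_2 e_2 : n_2 : e_2^3)$, reduces to the point at infinity $(0:1:0)$ modulo $p^k$ (since $p^k \mid e_2$ but $p \nmid n_2$), so one expects $Q+R \equiv Q \pmod{p^k}$. Because the theorem places no restriction on $p$ relative to $6\Delta(E)$, I cannot simply apply Lemma \ref{modNhom}; instead I would verify the claim by a direct $p$-adic computation using the standard Weierstrass addition formula
$$\lambda = \frac{y_2 - y_1}{x_2 - x_1}, \qquad x_3 = \lambda^2 - a - x_1 - x_2, \qquad y_3 = \lambda(x_1 - x_3) - y_1.$$
Writing $v = v_p$, the hypotheses give $v(x_1), v(y_1) \geq 0$, $v(x_2) = -2k$, and $v(y_2) = -3k$, from which $v(x_2 - x_1) = -2k$, $v(y_2 - y_1) = -3k$, and $v(\lambda) = -k$.

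For the $x$-coordinate I would first derive the identity
$$(x_3 - x_1)(x_2 - x_1)^2 = -x_1^3 + b(x_1 + x_2) + 2c - 2 y_1 y_2 + 2 a x_1 x_2 + 3 x_1^2 x_2$$
by expanding $\lambda^2 (x_2 - x_1)^2 = (y_2 - y_1)^2$ and substituting $y_i^2 = x_i^3 + a x_i^2 + b x_i + c$. Every summand on the right has $v \geq -3k$, while $v((x_2 - x_1)^2) = -4k$, so immediately $v(x_3 - x_1) \geq k$; this yields $X(Q+R) \equiv X(Q) \pmod{p^k}$.

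The $y$-coordinate is the main obstacle. Substituting into $y_3 - y_1 = \lambda(x_1 - x_3) - 2 y_1$ and clearing denominators, the resulting numerator over $(x_2 - x_1)^3$ is $-\bigl[(y_2 - y_1)N + 2 y_1 (x_2 - x_1)^3\bigr]$, where $N$ is the right-hand side of the identity above. A term-by-term valuation bound only yields $v \geq -6k$ for this numerator, which would give $v(y_3 - y_1) \geq 0$ and is insufficient. The key observation is that the two extremal contributions of valuation $-6k$ combine as
$$-2 y_1 y_2^2 + 2 y_1 x_2^3 \;=\; 2 y_1(x_2^3 - y_2^2) \;=\; -2 a y_1 x_2^2 - 2 b y_1 x_2 - 2 c y_1,$$
an expression of valuation $\geq -4k$, after invoking the curve equation $y_2^2 = x_2^3 + a x_2^2 + b x_2 + c$ exactly once. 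With this cancellation applied, every remaining term of the numerator has $v \geq -5k$, giving $v(y_3 - y_1) \geq -5k - (-6k) = k$ as desired. The entire argument is elementary polynomial manipulation; the only non-mechanical step is spotting that the curve equation must be invoked precisely to rescue the top-order $y$-coordinate estimate, which a naive valuation count misses by a single factor of $p^k$.
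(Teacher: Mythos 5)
Your proof is correct and takes essentially the same route as the paper's: a direct computation with the chord addition formula, in which the $x$-coordinate congruence follows from a straightforward valuation count and the $y$-coordinate requires exactly one application of the curve equation at $R$ to kill the top-order term that a naive count misses. The paper organizes this by first refining the $x$-computation to $x_3 \equiv x_1 - \dfrac{2y_1\tilde{y}_2p^k}{\tilde{x}_2^2} \pmod{p^{2k}}$ and substituting that correction $r$ into $y_3 = -\lambda x_3 - v$, whereas you keep $y_3 - y_1$ as a single fraction and cancel $-2y_1y_2^2 + 2y_1x_2^3$ via $y_2^2 = x_2^3 + ax_2^2 + bx_2 + c$ — the same cancellation in different clothing.
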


The above result follows from Lemma~\ref{modNhom} in the case when $p \nmid 6\Delta(E)$, but in light of the algorithm above, we are primarily interested in the case that $p | \Delta(E)$.

\begin{proof}
From Silverman \cite[p. 58-59]{Silverman}, we know that if we let $\lambda = \dfrac{y_2-y_1}{x_2-x_1}$ and
$v = \dfrac{y_1x_2 - y_2x_1}{x_2-x_1}$, then we have that
\begin{eqnarray*}
x_3 & = & \lambda^2 - a - x_1 - x_2 \\
 & = & \dfrac{ax_2^2 + bx_2 + c - 2y_1y_2 + y_1^2 + 2x_1x_2^2 - x_1^2x_2}{x_2^2 - 2x_1x_2 + x_1^2} - a - x_1.
\end{eqnarray*}

Now since $p^k \mid \mid e_2$, we can let $x_2 = \tilde{x}_2p^{-2k}$ and $y_2 = \tilde{y}_2p^{-3k}$. Plugging this in yields
\begin{equation}
\label{P+Q proof x3 middle}
 x_3 =  \dfrac{a\tilde{x}_2^2 + b\tilde{x}_2p^{2k} + cp^{4k} - 2y_1\tilde{y}_2p^k + y_1^2p^{4k} + 2x_1\tilde{x}_2^2 - x_1^2\tilde{x}_2p^{2k}}{\tilde{x}_2^2 - 2x_1\tilde{x}_2p^{2k} + x_1^2p^{4k}} - a - x_1.
\end{equation}
Reducing mod $p^k$ and mod $p^{2k}$ give us
\begin{gather}
\label{P+Q proof x3 = x1}
x_3 \equiv x_1 \pmod{p^k} \text{ and} \\
\label{P+Q proof x3 mod p2k}
x_3 \equiv x_1 - \dfrac{2y_1\tilde{y}_2p^k}{\tilde{x}_2^2} \pmod{p^{2k}}.
\end{gather}
Now that we have shown that $x_3 \equiv x_1 \pmod{p^k}$, we just need to show that $y_3 \equiv y_1 \pmod{p^k}$.
Since $x_3 \equiv x_1 \pmod{p^k}$, we can write $x_3 = x_1 + rp^k$. And again using $\lambda = \dfrac{y_2-y_1}{x_2-x_1}$ and $v = \dfrac{y_1x_2 - y_2x_1}{x_2-x_1}$, we have that
\begin{eqnarray*}
y_3 & = & -\lambda x_3 - v \\
 & = & \dfrac{-n_1m_1e_2^3 + n_1m_2e_1^2e_2 - n_1e_1^2e_2^3rp^k + n_2e_1^5rp^k}{m_1e_1^3e_2^3 - m_2e_1^5e_2}.
\end{eqnarray*}
Once again, since $p^k \mid \mid e_2$, we can let $e_2 = \tilde{e}_2p^k$. Then
\begin{equation*}
y_3 = \dfrac{-n_1m_1\tilde{e}_2^3p^{2k} + n_1m_2e_1^2\tilde{e}_2 - n_1e_1^2\tilde{e}_2^3rp^{3k} + n_2e_1^5r}{m_1e_1^3\tilde{e}_2^3p^{2k} - m_2e_1^5\tilde{e}_2}.
\end{equation*}
Reducing mod $p^k$ gives us
\begin{equation}
\label{P+Q proof y3 middle}
y_3 \equiv \dfrac{-n_1}{e_1^3} - \dfrac{n_2r}{m_2\tilde{e}_2} \pmod{p^k}.
\end{equation}
Now from equation \eqref{P+Q proof x3 mod p2k}, we know that $r \equiv -\dfrac{2y_1\tilde{y}_2}{\tilde{x}_2^2} \pmod{p^k}$. Simple algebra allows us to see that $r \equiv \dfrac{-2n_1n_2\tilde{e}_2}{m_2^2e_1^3} \pmod{p^k}$.
Plugging this into equation \eqref{P+Q proof y3 middle}, we get
\begin{eqnarray*}
y_3 & \equiv & \dfrac{-n_1}{e_1^3} - \dfrac{n_2}{m_2\tilde{e}_2} \cdot \dfrac{-2n_1n_2\tilde{e}_2}{m_2^2e_1^3} \pmod{p^k} \\
 & \equiv & \dfrac{-n_1}{e_1^3} + \dfrac{2n_1(m_2^3 + am_2^2e_2^2 + bme_2^4 + ce_2^6)}{m_2^3e_1^3} \pmod{p^k}.
\end{eqnarray*}
And since $e_2 \equiv 0 \pmod{p^k}$, we have that
\begin{eqnarray}
y_3 & \equiv & \dfrac{-n_1}{e_1^3} + \dfrac{2n_1m_2^3}{m_2^3e_1^3} \pmod{p^k} \nonumber \\
\label{P+Q proof y3 = y1}
 & \equiv & y_1 \pmod{p^k}.
\end{eqnarray}
\end{proof}

Now we can go on to prove that the algorithm to calculate $\tau_k$ is correct.

\begin{proof}
From Lemma \ref{disc of cubic lemma}, we can conclude that for any $p$ dividing $\tau_k$, we must have $p^2|\Delta(E)$. So we only need to consider primes $p$ which satisfy this condition.
We now break this problem into 2 cases.

\textbf{Case I:} There exists a $d \in \mathbb{Z}^+$ such that $2^dP \equiv (0:1:0) \pmod{p}$.
By Corollary \ref{elliptic fermat order universality prime corollary}, this implies that $p|F_d$. 
It also means that for all sufficiently large $k$ (i.e. $k \geq d$), $2^kP \equiv (0:1:0) \pmod{p}$. This means that $p$ divides the denominator of $X(2^kP)$ and $Y(2^kP)$ and thus $p$ does not divide $num(Y(2^kP))$.
Then since $F_k\tau_k = 2num(Y(2^{k-1}P))$ and we have that $p|F_k$ but $p \nmid num(Y(2^{k-1}P))$, we know that $p \nmid \tau_{k+1}$. So $\ord_p(\tau_k) = 0$ for all $k \geq d+1$.

\textbf{Case II:} $2^kP \not\equiv (0:1:0) \pmod{p}$ for any $k$. 
By Corollary \ref{elliptic fermat order universality prime corollary}, this implies that $p \nmid F_k$ for all $k$. Then since $F_k\tau_k = 2num(Y(2^{k-1}P))$, we have that $\ord_p(\tau_k) = \ord_p(2num(Y(2^{k-1}P)))$. And since $2^kP \not\equiv (0:1:0) \pmod{p}$, we know that $p$ does not divide the denominator of $Y(2^kP)$ for any $k$. Then we have $\ord_p(\tau_k) = \ord_p(2num(Y(2^{k-1}P))) = \ord_p(2Y(2^{k-1}P))$.
Now, we can find some $r \in \mathbb{Z}^+$ such that $rP = \left( \dfrac{m}{e^2}	, \dfrac{n}{e^3} \right)$ with $p^s | e$. Choose $s$ such that either $p^{2s} || \Delta(E)$ or $p^{2s+1} || \Delta(E)$.
Then $rP \equiv (0:1:0) \pmod{p^s}$. Using Theorem \ref{P+Q mod p^k thm}, we can see that $jP + rP \equiv jP \pmod{p^s}$ and conclude that $\ord_p(Y(tP))$ depends only on $t \mod{r}$. Then, since $2^k \mod{r}$ will repeat, we can use a finite number of calculations to determine $\ord_p(Y(2^kP))$ for all $k \geq 1$.

Now all that is left to show is that the ``if" statements in steps \ref{tau alg case1} and \ref{tau alg case2} correspond to the correct case.
It is obvious that if $\ell$ is a power of 2 (as required for step \ref{tau alg case1}), we are in Case I, and step \ref{tau alg case1} corresponds to this case.
Now, if $\ell \in \mathbb{Z}^+$ such that $\ell P \equiv (0:1:0) \pmod{p}$ is minimal but not a power of 2 (as required for step \ref{tau alg case2}), then any other $\ell'$ satisfying $\ell'P \equiv (0:1:0) \pmod{p}$ will be a multiple of $\ell$ and thus will not be a power of 2. Then we are in Case II, which corresponds to step \ref{tau alg case2}.

\end{proof}

\section{Primality}
\label{primality sec}

In this section, we will discuss a few theorems about the primality of the elliptic Fermat numbers. Our first theorem on this topic focuses on sequences for which the denominators of the coordinates of $P$ are even.

\begin{theorem}
\label{primality ek even}
If $2|e_t$ for some $t$, then for all $k \geq t$, either $F_k = 2$ or $F_k$ is composite.
\end{theorem}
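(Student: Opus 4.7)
The plan is to deduce the statement directly from the subgroup structure introduced in Section \ref{background sec} together with Lemma \ref{even Fk}, which was already proved in Section \ref{recurrence sec}. Recall that Lemma \ref{even Fk} asserts: whenever $2^k P \equiv (0:1:0) \pmod 2$, the elliptic Fermat number $F_k(E,P)$ is even. Since an even integer is either $2$ or composite, it suffices to verify that $2 \mid e_k$ (equivalently, $2^k P \equiv (0:1:0) \pmod 2$) for every $k \geq t$.

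The first step is to translate the hypothesis into the language of $2$-adic reduction. By the remark in Section \ref{background sec}, the condition $2 \mid e_t$ is precisely the statement that $2^t P \in E_1(\mathbb{Q}_2)$. The key observation I will use is that $E_1(\mathbb{Q}_2)$ is a subgroup of $E(\mathbb{Q}_2)$, so it is closed under integer multiplication. In particular, for each $k \geq t$ I can write $2^k P = 2^{k-t} \cdot (2^t P)$, and this forces $2^k P \in E_1(\mathbb{Q}_2)$, i.e.\ $2 \mid e_k$ for every $k \geq t$.

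The second step is to apply Lemma \ref{even Fk} at each such $k$, giving that $F_k(E,P)$ is even for every $k \geq t$. Since $F_k(E,P) \in \mathbb{Z}_{>0}$ and is divisible by $2$, it is either equal to $2$ or has $2$ as a proper divisor and is therefore composite. This is exactly the conclusion of Theorem \ref{primality ek even}.

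I do not expect a serious obstacle here: the main content was already packaged into the observation that $E_1(\mathbb{Q}_2)$ is a subgroup, and into Lemma \ref{even Fk}. The only thing one should take care with is to note that the statement is vacuous if $2 \nmid e_k$ for all $k$, and that the hypothesis puts no restriction on how $2 \mid F_t$ factors, which is why the conclusion is phrased as ``$F_k = 2$ or composite'' rather than simply ``composite.''
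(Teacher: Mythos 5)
Your proof is correct and takes essentially the same route as the paper: the paper likewise observes that $2\mid e_t$ means $2^tP\equiv(0:1:0)\pmod 2$, concludes that $F_k$ is even for all $k\geq t$, and notes that an even positive integer is either $2$ or composite. Your version is slightly more careful in justifying the step ``$2\mid e_k$ for all $k\geq t$'' by appealing to the subgroup $E_1(\mathbb{Q}_2)$ and to Lemma~\ref{even Fk}, which the paper leaves implicit.
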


\begin{proof}
Suppose that $2^tP = \left( \dfrac{m_t}{e_{t}^2}, \dfrac{n_t}{e_{t}^3} \right)$ and $2|e_t$. This tells us that $2^tP\equiv (0:1:0)\pmod{2}$. Therefore $F_k$ is even for all $k\geq t$. In which case $F_k=2$ or $F_k$ is a multiple or $2$ greater than $2$ and is therefore composite. 
\end{proof}

From this theorem, we also have the following corollary:

\begin{corollary}
If $2|e_t$ for some $t$, then for all $k \geq \ell$ for some sufficiently large $\ell$, $F_k$ is composite.
\end{corollary}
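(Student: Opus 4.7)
The plan is to derive this corollary as an almost immediate consequence of the preceding Theorem \ref{primality ek even} together with the growth rate result (Theorem \ref{growth rate thm}). By Theorem \ref{primality ek even}, once $2 \mid e_t$ we have, for every $k \geq t$, that $F_k$ is either $2$ or composite. Hence the corollary follows as soon as we can rule out the possibility that $F_k = 2$ for infinitely many $k$; equivalently, it suffices to show $F_k \to \infty$ so that $F_k > 2$ for all sufficiently large $k$.

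For this I would invoke Theorem \ref{growth rate thm}, which gives
\[
\lim_{k \to \infty} \frac{\log F_k}{4^k} = \frac{3}{8}\,\hat{h}(P).
\]
Since $P$ has infinite order by hypothesis, the standard fact that the canonical height vanishes precisely on torsion points yields $\hat{h}(P) > 0$. Consequently $\log F_k \to \infty$, and in particular there exists some $\ell_0$ such that $F_k > 2$ for all $k \geq \ell_0$. Setting $\ell = \max(t, \ell_0)$, the dichotomy from Theorem \ref{primality ek even} then forces $F_k$ to be composite for every $k \geq \ell$, which is the desired conclusion.

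There is essentially no obstacle here beyond citing the two results cleanly and noting the positivity of $\hat{h}(P)$; the only minor subtlety is that Theorem \ref{primality ek even} on its own still leaves open infinitely many occurrences of the value $2$, and it is precisely the doubly exponential growth of $F_k$ that eliminates this. If one wished to avoid invoking Theorem \ref{growth rate thm}, a self-contained alternative would be to observe that by the remark following the proof of Theorem \ref{coprimality thm}, $\operatorname{ord}_2(e_k)$ is strictly increasing for $k \geq t$, which forces $e_k \to \infty$ and then, via Lemma \ref{ek lemma} together with a bound on how many consecutive $F_k$ can equal $2$, again yields $F_k > 2$ eventually; but the appeal to the growth rate theorem is the most direct route.
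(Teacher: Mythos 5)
Your proof is correct, but it takes a genuinely different route from the paper's. The paper argues directly from the identity $F_k = 2n_{k-1}/\tau_k$: for $F_k$ to equal $2$ one would need $n_{k-1} = \tau_k$, and since the $\tau_k$ are eventually periodic (Theorem \ref{tau alg thm}) — hence take only finitely many values — while $n_k$ is unbounded, this can happen for at most finitely many $k$. You instead invoke Theorem \ref{growth rate thm} together with the positivity of $\hat{h}(P)$ for a point of infinite order to conclude $F_k \to \infty$, and then combine with the dichotomy of Theorem \ref{primality ek even}. Both arguments are sound and both ultimately rest on the unboundedness of the sequence; yours is arguably the more self-justifying one, since the paper leaves the non-periodicity of $n_k$ implicit (it too really comes from height growth), while the paper's version has the organizational advantage of not forward-referencing a theorem whose proof appears in a later section (though there is no circularity, as the growth-rate proof is independent of the primality results). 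One caution on your proposed alternative: the observation that $\ord_2(e_k)$ is strictly increasing is, by itself, consistent with $F_k = 2$ for all large $k$, so that route genuinely needs the extra bound you mention; you correctly flag this and do not rely on it.
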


To prove the corollary, we need only show that $F_k\ne 2$ for all sufficiently large $k$.

\begin{proof}
We know that $F_k = \dfrac{2n_{k-1}}{\tau_k}$. Then in order for $F_k$ to equal 2, we must have that $n_{k-1} = \tau_k$. But since $\tau_k$ is periodic for all $k \geq \ell$ for some finite $\ell$, and $n_k$ is not, we know that $F_k \neq 2$ for all $k \geq \ell$ for some finite $\ell$.
\end{proof}

The case in which the denominator of $2^kP$ is always odd is trickier, and in fact we have not come up with a theorem covering all such elliptic Fermat sequences. The theorem that we do have requires a few lemmas.

\begin{lemma}
\label{only one int pt}
Assume that $E(\mathbb{Q}) \cong \mathbb{Z} \times \mathbb{Z}/2\mathbb{Z}$ and $E(\mathbb{Q}) = \langle P,T \rangle$, where $P$ is a generator of $E(\mathbb{Q})$ and $T$ is a rational point of order $2$. Assume that:
\begin{enumerate}[(i)]
\item $E$ has an egg.
\item $T$ is on the egg.
\item $T$ is the only integral point on the egg.
\item $P$ is not integral.
\end{enumerate}
Then $T$ is the only integral point on $E$.
\end{lemma}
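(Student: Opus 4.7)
The strategy is to take an arbitrary integral point $Q \in E(\mathbb{Q})$ and prove $Q = T$. Since $E(\mathbb{Q}) = \langle P, T\rangle$ with $T$ of order $2$, write $Q = nP + \epsilon T$ for $n \in \mathbb{Z}$ and $\epsilon \in \{0, 1\}$. By hypothesis (i), $E(\mathbb{R})$ has two connected components, and the map sending a real point to its component is a group homomorphism $E(\mathbb{R}) \to \mathbb{Z}/2\mathbb{Z}$, which lets me split the analysis according to which component $Q$ lies on.

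If $Q$ lies on the egg, hypothesis (iii) immediately gives $Q = T$. So the substantive task is to rule out affine integral $Q$ on the nose. The key tool is the subgroup $E_1(\mathbb{Q}_p) \subset E(\mathbb{Q}_p)$ of points reducing to the identity modulo $p$. By (iv), $e_0 > 1$, so there is a prime $p \mid e_0$; writing $P$ projectively as $(m_0 e_0 : n_0 : e_0^3)$ and reducing, $P \equiv (0:1:0) \pmod p$, so $P \in E_1(\mathbb{Q}_p)$. Since $E_1(\mathbb{Q}_p)$ is a subgroup, $nP \in E_1(\mathbb{Q}_p)$ for every $n$; equivalently, $p \mid e_n$, so $nP$ is not integral for any $n \neq 0$.

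In the case that $P$ itself lies on the nose, the component map sends $P \mapsto 0$ and $T \mapsto 1$, so the nose part of $E(\mathbb{Q})$ is exactly $\langle P\rangle$; an integral $Q$ on the nose must then be $nP$, which forces $n = 0$, i.e., $Q = O$, which is not an affine point. This resolution in particular covers the curve $E\colon y^2 = x^3 - 199x^2 - x$ used to illustrate Theorem \ref{primality ek odd}.

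The main obstacle is the case that $P$ lies on the egg, for then the nose contains the extra coset $\{(2k+1)P + T : k \in \mathbb{Z}\}$ that the bare $E_1$ argument does not eliminate. To dispose of these I would use the identity $(x_{R+T} - x_T)(x_R - x_T) = f'(x_T)$ (with $f(x) = x^3 + ax^2 + bx + c$), which follows from the secant construction of $R + T$ when $T = (x_T, 0)$: integrality of $R + T = (2k+1)P + T$ forces the integer $m - x_T e^2$, where $(2k+1)P = (m/e^2, n/e^3)$, to divide $f'(x_T)$. Combined with the unboundedness of $|m - x_T e^2|$ along the orbit (a standard canonical-height consequence of $P$ having infinite order), this divisibility can hold for at most finitely many $k$, which may then be ruled out individually. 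This last subcase is where the real technical work lies.
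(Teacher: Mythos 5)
Your decomposition and the first two cases coincide with the paper's proof: the paper also writes every rational point as $mP$ or $mP+T$, disposes of integral points on the egg by hypothesis (iii), and disposes of $mP$ on the nose with the assertion that ``$mP$ is not integral because $P$ is not integral'' --- for which your $E_1(\mathbb{Q}_p)$ argument (a prime $p$ dividing $e_0$ divides the denominator of $mP$ for every $m\neq 0$) is exactly the right substantiation. Up to that point the proposal is correct and matches the paper.

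The subcase you flag as ``the main obstacle'' is, however, not actually resolved by your proposal, and this is a genuine gap. Your identity $(x_{R+T}-x_T)(x_R-x_T)=f'(x_T)$ is correct and does yield $(m-x_Te^2)\mid f'(x_T)$, hence $|m-x_Te^2|\le |f'(x_T)|$. But the claimed unboundedness of $|m-x_Te^2|=e^2|x_R-x_T|$ along the orbit is not ``a standard canonical-height consequence'': the canonical height only gives $e^2\to\infty$, while $|x_R-x_T|$ could a priori decay like $e^{-2}$ (the trivial bound in the other direction is merely $|m-x_Te^2|\ge 1$), and excluding such decay is a Diophantine-approximation statement about how fast $(2k+1)P$ can approach $T$ in $E(\mathbb{R})$ --- essentially a Siegel/Roth-type input, not a formal consequence of $\hat{h}$. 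Even granting finiteness, ``ruled out individually'' is vacuous in an abstract lemma, since no finite computation is specified. The paper's own resolution of this case is much shorter: when $P$ is on the egg it sets $P'=P+T$, which lies on the nose and satisfies $E(\mathbb{Q})=\langle P',T\rangle$, and reruns the first argument with $P'$ in place of $P$; your problematic coset $\{(2k+1)P+T\}$ becomes part of $\{mP'\}$ and is killed by the same $E_1(\mathbb{Q}_p)$ step. Be aware, though, that this substitution silently requires $P'=P+T$ to be non-integral, and that does \emph{not} follow from hypothesis (iv) by the $E_1$ argument: a prime dividing $e_0$ cannot divide the denominator of $P+T$, precisely because $T$ is integral and $E_1(\mathbb{Q}_p)$ is a subgroup. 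The paper writes only ``the proof is the same as before'' at this point, so the difficulty you correctly isolated is real; the intended fix is the substitution $P\mapsto P+T$, not a height-versus-divisibility argument.
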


\begin{proof}
Every point in $E(\mathbb{Q})$ is of the form $mP$ or $mP+T$. If $P$ is on the nose, then  we have that for any $m\ne 0$, $mP$ is on the nose, and $mP$ is not integral because $P$ is not integral. We also have that $mP+T$ is on the egg and thus is not integral because $T$ is the only integral point on the egg by assumption. If $P$ is on the egg, then let $P' = P+T$. Then $P'$ is on the nose, and the proof is the same as before.
\end{proof}

\begin{lemma}
\label{gcd(m,b) = 1}
Let elliptic curve $E$ be of the form $y^2 = x^3 + ax^2 + bx$ and suppose $\gcd(m_0,b) = 1$. Then $\gcd(m_k,b) = 1$ for all $k$.
\end{lemma}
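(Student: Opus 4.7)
The plan is to use the recurrence \eqref{mk rec} from Theorem \ref{recurrence thm} and exploit the fact that setting $c=0$ makes the numerator a perfect square. Specifically, with $c=0$, equation \eqref{mk rec} becomes
\begin{equation*}
m_k \tau_k^2 \;=\; m_{k-1}^4 - 2bm_{k-1}^2 e_{k-1}^4 + b^2 e_{k-1}^8 \;=\; \bigl(m_{k-1}^2 - b e_{k-1}^4\bigr)^2.
\end{equation*}
This identity is the key algebraic observation; everything else is formal.

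From here I would argue by induction on $k$. Suppose as inductive hypothesis that $\gcd(m_{k-1},b)=1$, and let $p$ be any prime dividing $b$. Then reducing the identity above modulo $p$ gives $m_k \tau_k^2 \equiv m_{k-1}^4 \pmod p$. Since $p \mid b$ and $\gcd(m_{k-1},b)=1$, we have $p \nmid m_{k-1}$, so $m_k \tau_k^2 \not\equiv 0 \pmod p$, and in particular $p \nmid m_k$. Because this holds for every prime divisor $p$ of $b$, we conclude $\gcd(m_k,b)=1$, completing the induction step; the base case $k=0$ is the hypothesis.

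I do not expect any real obstacle here, since the argument is essentially a one-line consequence of the fact that the $c=0$ specialization of the duplication numerator is a square. The only thing to be mildly careful about is that $\tau_k$ could in principle share a factor with $b$ and thus contribute to the left side; the argument above handles this correctly by noting that regardless of what $\tau_k$ contributes, the right-hand side is coprime to $p$, so $m_k$ itself must be coprime to $p$. No appeal to the integrality of intermediate quantities beyond what is already guaranteed by Theorem \ref{recurrence thm} is needed.
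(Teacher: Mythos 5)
Your proof is correct and is essentially the paper's own argument: both apply the $c=0$ specialization of the recurrence \eqref{mk rec} and observe that modulo any prime divisor of $b$ the numerator reduces to $m_{k-1}^4$, which is nonzero by the inductive hypothesis, so $m_k$ cannot pick up that prime regardless of what $\tau_k^2$ absorbs. The perfect-square factorization you highlight is a pleasant but inessential aside; the reduction-mod-$p$ step is what carries the proof in both versions.
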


\begin{proof}
We use induction. The base case $\gcd(m_0,b) = 1$ is true by assumption. Now assume that $\gcd(m_{k-1},b) = 1$.
Since $c = 0$, from our recurrence relations, we can see that
\begin{equation*}
m_k = \dfrac{m_{k-1}^4 - 2bm_{k-1}^2e_{k-1}^4 + b^2e_{k-1}^8}{\tau_k^2}.
\end{equation*}
Now since $b$ divides the $-2bm_{k-1}^2e_{k-1}^4$ and $b^2e_{k-1}^8$ terms in the numerator but is coprime to the $m_{k-1}^4$ term, $b$ is coprime to the numerator. Dividing by $\tau_k^2$ will not change this. Thus $\gcd(m_k,b) = 1$ for all $k$.
\end{proof}

With these two lemmas, we can now prove Theorem \ref{primality ek odd}.

Note that the condition that $2 \nmid e_k$ for all $k$ can be checked with finitely many calculations by looking at the Tamagawa number at $2$ for the curve $E$. If the curve has additive reduction and $P\not\in E_0(\mathbb{Q}_p)$, then the Tamagawa number can only be $1$, $2$, $3$, or $4$. The condition holds when the Tamagawa number at $2$ is $3$ because $\ell P$ is a singular point mod $2$ unless $3\vert \ell$. Also note that the condition that $x^4 + ax^2y^2 + by^4 = 1$ has no integer solutions where $y \not \in \{0,\pm 1 \}$ can also be checked with finitely many calculations, as this is a Thue equation. Such an
equation has finitely many solutions (by \cite{Thue}), and the solutions
can be found effectively (see \cite{TzanakisWeger}). We will now go on to prove the theorem.

\begin{proof}[Proof of Theorem \ref{primality ek odd}]
Without loss of generality, let $c = 0$ and let $T = (0,0)$. (If not, we can easily shift the curve so that this is true.)
Let $2^{k-1}P=\left(\dfrac{m_{k-1}}{e_{k-1}^2},\dfrac{n_{k-1}}{e_{k-1}^3}\right)$, and let $2^{k-1}P+T=\left(\dfrac{m_{T}}{e_{T}^2},\dfrac{n_{T}}{e_{T}^3}\right)$.

Using the formulas for adding points given by Silverman \cite[p. 58-59]{Silverman}, we can see that
\begin{gather}
\label{prim proof X2PT}
X(2^{k-1}P + T) = \dfrac{be_{k-1}^2}{m_{k-1}}, \\
\label{prim proof Y2PT not lowest terms}
Y(2^{k-1}P + T) = \dfrac{-bn_{k-1}e_{k-1}}{m_{k-1}^2}. \nonumber
\end{gather}

By the assumption that $\gcd(b,m_0) = 1$ and by Lemma \ref{gcd(m,b) = 1}, we know that $\gcd(b,m_{k-1}) = 1$. And since $\gcd(m_{k-1},e_{k-1}) = 1$, equation \eqref{prim proof X2PT} must be in lowest terms. Then $e_T = \sqrt{|m_{k-1}|}$, so we can set up the following equation:

\begin{equation*}
\dfrac{-bn_{k-1}e_{k-1}}{m_{k-1}^2} = \dfrac{n_T}{\sqrt{|m_{k-1}|}^3}.
\end{equation*}

Solving for $n_T$ yields

\begin{equation}
\label{prim proof nT}
n_T = \dfrac{-bn_{k-1}e_{k-1}}{\sqrt{|m_{k-1}|}}.
\end{equation}

Now, notice that $2(2^{k-1}P)=2^kP$ and also $2(2^{k-1}P+T)=2^kP$ as $T$ has order 2 in which case $2T$ is the point at infinity. Then

\begin{eqnarray*}
\denom(2(2^{k-1}P)) & = & \denom(2(2^{k-1}P+T)) \\
\dfrac{4n_{k-1}^2e_{k-1}^2}{\tau_k^2} & = & \dfrac{4n_T^2e_T^2}{\tau_T^2} \\
\dfrac{2n_{k-1}e_{k-1}}{\tau_k} & = & \dfrac{2n_Te_T}{\tau_T}.
\end{eqnarray*}

Solving for $\tau_T$ yields

\begin{equation*}
\tau_T = \dfrac{\tau_kn_Te_T}{n_{k-1}e_{k-1}}.
\end{equation*}

Plugging in $n_T = \dfrac{-bn_{k-1}e_{k-1}}{\sqrt{m_{k-1}}}$, $e_T = \sqrt{|m_{k-1}|}$, and $|\tau_k| = 2$ gives us 

\begin{equation*}
\label{prim proof tauT}
|\tau_T| = \left|\dfrac{-2bn_{k-1}e_{k-1}\sqrt{|m_{k-1}|}}{n_{k-1}e_{k-1}\sqrt{|m_{k-1}|}}\right| = 2|b|.
\end{equation*}

Now, the duplication formula tells us that

\begin{equation*}
\denom(2(2^{k-1}P+T)) = \dfrac{4n_T^2e_T^2}{\tau_T^2} = \dfrac{n_T^2e_T^2}{b^2}.
\end{equation*}

And since $\denom(2(2^{k-1}P))=\denom(2^kP)$, we have that $F_k = \left| \dfrac{n_Te_T}{be_{k-1}} \right|$.

Note that if $p$ is a prime and $p|e_{k-1}$ then $2^{k-1}P\equiv(0:1:0)\pmod{p}$ in which case $2^{k-1}P+T\equiv T\pmod{p}$. And since $T$ is not the point at infinity, $2^{k-1}P+T\not\equiv (0:1:0)\pmod{p}$. Therefore $p\nmid e_T$. Hence $\gcd(e_{k-1},e_T)=1$.
And since $\gcd(m_{k-1},b) = 1$ and $e_T = \sqrt{|m_{k-1}|}$, we have that $\gcd(e_T, b) = 1$.
Thus $F_k = \left| \dfrac{-n_T}{be_{k-1}} \right| \cdot e_T$. Note that $e_T \ne 1$ as there is only one integral point on this curve. Therefore $F_k$ is composite as long as $\dfrac{n_T}{be_{k-1}} \ne \pm 1$. Plugging in \eqref{prim proof nT} for $n_T$ yields

\begin{eqnarray*}
\dfrac{n_T}{be_{k-1}} & = & \dfrac{\left( \frac{-bn_{k-1}e_{k-1}}{\sqrt{|m_{k-1}|}} \right)}{be_{k-1}} \\
 & = & \dfrac{n_{k-1}}{\sqrt{|m_{k-1}|}}
\end{eqnarray*}

Thus $\dfrac{n_T}{be_{k-1}} = \pm 1$ if and only if $n_{k-1} = \pm \sqrt{|m_{k-1}|}$.

We now proceed by contradiction. Assume that $n_{k-1} = \pm \sqrt{|m_{k-1}|}$. Then $n_{k-1}^2 = |m_{k-1}|$. Plugging this into the equation for the cubic yields $|m_{k-1}|=m_{k-1}^3+am_{k-1}^2e_{k-1}^2+bm_{k-1}e_{k-1}^4$. And thus $m_{k-1}^2+am_{k-1}e_{k-1}^2+be_{k-1}^4= \pm 1$. But by assumption, this equation has no solutions where $e_{k-1} \not \in \{0, \pm 1 \}$. Therefore $F_k$ is composite for all $k \geq 1$.

\end{proof}

\section{Growth Rate}
\label{growth rate sec}

In this section, we will discuss the growth rate of the elliptic Fermat numbers. In order to do so, we need a few more tools. The first new definition we need is the \emph{height} of a point.

\begin{definition}
The \emph{height} of a point $P = \left( \dfrac{m}{e^2}, \dfrac{n}{e^3} \right)$ on an elliptic curve is defined as
\begin{equation*}
h(P) = \log(\max(|m|, e^2)).
\end{equation*}
\end{definition}

The height of a point gives us a way to express how ``complicated" the coordinates of the point are. We also need to make use of the \emph{canonical height}.

\begin{definition}
The \emph{canonical height} of a point $P$ on an elliptic curve is defined as
\begin{equation*}
\hat{h}(P) = \lim_{k \to \infty} \dfrac{h(2^kP)}{4^k}.
\end{equation*}
\end{definition}

Interestingly, if we let $\ell P = \left( \dfrac{A_\ell}{C_\ell^2}, \ast \right)$ with $\gcd(A_\ell, C_\ell) = 1$, then $\lim\limits_{\ell \to \infty} \dfrac{\log(C_\ell^2)}{\ell^2} = \lim\limits_{\ell \to \infty} \dfrac{|A_\ell|}{\ell^2} = \hat{h}(P)$ \cite[p. 250]{Silverman}. This allows us to derive Theorem \ref{growth rate thm}.

Note that this theorem can also be stated as $F_k \approx e^{4^k \cdot \frac{3}{8} \hat{h}(P)}$. So the elliptic Fermat sequences grow doubly exponentially, like the classic Fermat sequence, albeit much more quickly. The proof is as follows:

\begin{proof}
\begin{eqnarray*}
\lim_{k \to \infty} \dfrac{\log(F_k(E,P))}{4^k} & = & \lim_{k \to \infty} \dfrac{\log(\frac{e_k}{e_{k-1}})}{4^k} \\
 & = & \lim_{k \to \infty} \dfrac{\frac{1}{2}\log(e_k^2)}{4^k} - \lim_{k \to \infty} \dfrac{\frac{1}{2}\log(e_{k-1}^2)}{4 \cdot 4^{k-1}} \\
 & = & \dfrac{1}{2} \lim_{k \to \infty} \dfrac{\log(e_k^2)}{4^k} - \dfrac{1}{8} \lim_{k \to \infty} \dfrac{\log(e_{k-1}^2)}{4^{k-1}} \\
 & = & \dfrac{1}{2}\hat{h}(P) - \dfrac{1}{8}\hat{h}(P) \\
 & = & \dfrac{3}{8}\hat{h}(P).
\end{eqnarray*}
\end{proof}

\section{$y^2 = x^3 - 2x$}
\label{specialcurve}

In this section, we apply the hitherto developed theory of elliptic Fermat numbers to examine properties of the curve $E: y^2 = x^3 -2x$ and the point $P = (2,2) \in E(\Q)$. 

We begin with some remarks on $E$ and the point $P$. Recall that $E$ is equipped with complex multiplication and so Proposition \ref{specialorder} gives a formula for $|E(\F_p)|$ for all $p$. Elliptic curves
with complex multiplication are the key to the Atkin-Goldwasser-Kilian-Morain
elliptic curve primality proving algorithm, and elliptic curve algorithms
to prove primality of Fermat numbers and other special sequences have
been considered previously in \cite{Gross}, \cite{DenommeSavin}, \cite{Tsumura},
and most recently \cite{Silverbergetal}. The last remark we make is about the elliptic Fermat sequence $\{F_n(E,P)\}$ and the appearance of Fermat and Mersenne primes, primes of the form $2^p-1$ for a prime $p$, in the factorization of $F_k(E,P)$.

\begin{center}
\begin{tabular}[c]{|l| l|}
\hline
$n$ & $F_n(E,P)$ \\
\hline
0 & 1 \\
\hline
1 & 2 \\
\hline
2 & $ 2\cdot \mathbf{3}\cdot \mathbf{7}$ \\
\hline
3 & $ 2\cdot \mathbf{31} \cdot 113 \cdot \mathbf{257}$ \\
\hline
4 & $2 \cdot 2113 \cdot 2593 \cdot 46271 \cdot 101281 \cdot 623013889$\\
\hline
5 & $2 \cdot \mathbf{127} \cdot \mathbf{65537} \cdot 33303551 \cdot 70639871 \cdot 364024274689 \cdot \cdots \cdot 676209479362440577$\\
\hline
\end{tabular}
\end{center}

The table above provides a factorization of the first 6 elliptic Fermat numbers for $E$ at $P$, with known Fermat and Mersenne primes in bold. In fact, every odd prime factor dividing $F_n(E,P)$ for $n \geq 2$ will have a congruence that is either Mersenne-like or Fermat-like. We now present the proof of Theorem \ref{cong}, beginning with the congruence result for a prime divisor $p \equiv -1 \pmod 4$, which yields a tidy Mersenne-like congruence.

\begin{proof}[Proof of Theorem \ref{cong} for $p \equiv 3 \pmod 4$]
By Theorem \ref{elliptic fermat order universality thm}, $p \mid F_n(E,P)$ tells us that $P$ has order $2^n$ in $E(\F_p)$. Then by Lagrange's theorem and Proposition \ref{specialorder}, $2^n \vert |E(\F_p)| = p+1$, and so $p \equiv -1 \pmod {2^n}$.
\end{proof}

Proving the congruence in the case of a prime divisor of an elliptic Fermat number congruent to 1 modulo 4 will require multiple steps. We will eventually show that such a prime divisor of $F_n(E,P)$ is congruent to $1$ modulo $2^n$, but we begin by showing an initial congruence result:

\begin{lemma}
Let $E: y^2 = x^3 - 2x$ be an elliptic curve, $P = (2,2)$ a point of infinite order and $F_n(E,P)$ the $n$th elliptic Fermat number associated to $E$ at the point $P$. Then for any odd prime divisor $p \equiv 1 \pmod 4$ of $F_n(E,P)$, $n \geq 3$, 
$ p \equiv  1 \pmod{\max(2^{\floor{n/2}}, 8)}.$
\end{lemma}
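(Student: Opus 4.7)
The plan is to combine the order-universality information with the Gaussian-integer structure of $p$. By Corollary \ref{elliptic fermat order universality prime corollary}, $p \mid F_n(E,P)$ implies $P$ has order exactly $2^n$ in $E(\F_p)$, so Lagrange's theorem gives $2^n \mid |E(\F_p)|$. Proposition \ref{specialorder} writes $|E(\F_p)| = p + 1 - a_p$ with $|a_p| \in \{2|a|, 2|b|\}$ for the unique decomposition $p = a^2 + b^2$ with $a \equiv -1 \pmod 4$ (so $a$ is odd and $b$ is even). My first step is to rule out $|a_p| = 2|b|$: in that case $|E(\F_p)| = a^2 + (b \mp 1)^2$ is a sum of two odd squares, hence $\equiv 2 \pmod 8$, which contradicts $2^n \mid |E(\F_p)|$ for $n \geq 3$. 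So $a_p = \pm 2a$. Reading Proposition \ref{specialorder} backwards, this means $2^{(p-1)/4} \equiv \pm 1 \pmod{p}$, so $2$ is a quadratic residue mod $p$; combined with $p \equiv 1 \pmod 4$ this yields $p \equiv 1 \pmod 8$.

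To obtain the further congruence $p \equiv 1 \pmod{2^{\floor{n/2}}}$, I would pass to $\mathbb{Z}[i]$. Factor $p = \pi \bar{\pi}$ and choose the Gaussian prime $\pi$ (among the associates $\pm a \pm bi$) so that $\pi + \bar{\pi} = a_p$; then $|E(\F_p)| = (\pi - 1)(\bar{\pi} - 1) = N(\pi - 1)$. The key dictionary is at the ramified prime $(1+i)$ above $2$: since $2 = -i(1+i)^2$, one has $v_{(1+i)}(m) = 2 v_2(m)$ for any $m \in \mathbb{Z}$, and since the ideals $(1+i)$ and $(1-i)$ coincide, complex conjugation preserves $v_{(1+i)}$. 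Combining these, $v_{(1+i)}(z) = v_2(N(z))$ for $z \in \mathbb{Z}[i]$, so $v_{(1+i)}(\pi - 1) = v_2(N(\pi - 1)) \geq n$, and conjugating gives $(1+i)^n \mid \bar{\pi} - 1$ as well.

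Expanding $p - 1 = \pi \bar\pi - 1 = (\pi - 1)(\bar\pi - 1) + (\pi - 1) + (\bar\pi - 1)$ shows that every term on the right is divisible by $(1+i)^n$, hence $(1+i)^n \mid p - 1$. Since $p - 1 \in \mathbb{Z}$, the dictionary gives $2 v_2(p-1) = v_{(1+i)}(p - 1) \geq n$, so $v_2(p-1) \geq \lceil n/2 \rceil \geq \floor{n/2}$. Combining with $p \equiv 1 \pmod 8$ yields $p \equiv 1 \pmod{\max(2^{\floor{n/2}}, 8)}$. The main conceptual step is the $v_{(1+i)}$-vs.-$v_2$ dictionary; once that is in place the rest is short bookkeeping, with the only mild care needed in choosing the correct associate of $\pi$ so that $|E(\F_p)| = N(\pi - 1)$ rather than $N(\pi + 1)$.
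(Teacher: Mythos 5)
Your proof is correct. The opening steps coincide with the paper's: order universality plus Lagrange gives $2^n \mid |E(\F_p)|$, and the case $|a_p| = 2|b|$ is eliminated because $a^2 + (b \mp 1)^2$ is a sum of two odd squares, hence has $2$-adic valuation exactly $1$ (the paper phrases this as ``$b$ would have to be odd,'' but the content is the same), which forces $2$ to be a quadratic residue and so $p \equiv 1 \pmod 8$. Where you diverge is the main congruence: the paper stays elementary, arguing directly from $(a \mp 1)^2 + b^2 \equiv 0 \pmod{2^n}$ that $a \mp 1 \equiv b \equiv 0 \pmod{2^{\floor{n/2}}}$ by comparing $2$-adic valuations of the two squares, and then expanding $p = (a\mp 1)^2 \pm 2(a\mp 1) + 1 + b^2$. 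You instead package the same information as $(1+i)^n \mid \pi - 1$ in $\Z[i]$ via the dictionary $v_{(1+i)}(z) = v_2(N(z))$, and read off the conclusion from $p - 1 = (\pi-1)(\bar\pi-1) + (\pi-1) + (\bar\pi-1)$. These are really two presentations of one computation --- ``$2^{\floor{n/2}}$ divides both $a\mp 1$ and $b$'' is exactly ``$(1+i)^{2\floor{n/2}}$ divides $\pi - 1$'' --- but your bookkeeping is cleaner, dispenses with the case split on the quartic character of $2$ at this stage, and in fact delivers the marginally stronger exponent $\lceil n/2\rceil$. One cosmetic point: the eight elements of norm $p$ are $\pm a \pm bi$ and $\pm b \pm ai$, with traces $\pm 2a$ and $\pm 2b$; since your first step already pins $a_p = \pm 2a$, choosing $\pi$ with $\pi + \bar\pi = a_p$ from among $a+bi$ and $-a+bi$ is legitimate, so this is imprecision rather than a gap.
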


\begin{proof}
If $p \equiv 1 \pmod 4$, then $p = a^2 + b^2$ where $a \equiv -1 \pmod 4$. Recall that in this situation, the value of $|E(\F_p)|$ depends on the quartic character of 2 modulo $p$. Let us first consider the case where 2 is a fourth power. Then $|E(\F_p)| = p+1 - 2a$. 

Like the proof of the previous theorem, we use Lagrange's theorem to show that $2^n \mid E(\F_p) = a^2+b^2+1- 2a = (a-1)^2 + b^2$. So $(a-1)^2 + b^2 \equiv 0 \pmod {2^n}$. Then $a-1 \equiv b \equiv 0 \pmod{ 2^{\lfloor{n/2}\rfloor}}$, giving $p = a^2 + b^2 = (a-1)^2 + 2a - 1 b^2 \equiv 1 \pmod { 2^{\lfloor{n/2}\rfloor}}$. A symmetric argument follows when 2 is a quadratic residue but not a fourth power. In this situation we arrive at the equation $(a+1)^2 + b^2 \equiv 0 \pmod{2^n}$, however the result is precisely the same.

To conclude, we rule out the case where 2 is not a quadratic residue modulo $p$. This would imply $|E(\F_p)| = p+1 \pm 2b$. The same algebraic manipulation leads to a similar situation where $a^2 + (b \mp 1)^2 \equiv 0 \pmod{2^n}$, but this means $b \equiv \pm 1 \pmod {2^{\floor{n/2}}}$, however $b$ is the even part of the two-square representation of $p$. So it cannot be the case that 2 is not a quadratic residue modulo 8, which happens only when $p \equiv 5 \pmod 8$.
\end{proof}

Because of the lemma, we have $p \equiv 1 \pmod 8$, and so we can make sense of $\sqrt 2$ and $i$ modulo $p$. We now define the recklessly-notated action $i$ on $E(\F_p)$ as $i(x,y) \mapsto (-x,iy)$, where the point $(-x,iy)$ uses $i$ as the square root of $-1$ modulo $p$.

This action makes $E(\F_p)$ into a $\Z[i]$-module. We will prove one last lemma concerning the action of $(1+i)$ before moving on to the full congruence.

\begin{lemma} 
Let $E: y^2 = x^3 - 2x$ be an elliptic curve, $P = (2,2)$ a point of infinite order and $F_n(E,P)$ the $n$th elliptic Fermat number associated to $E$ at the point $P$. Then for any odd prime factor $p \equiv 1 \pmod 4$ of $F_n(E,P)$, $n \geq 3$, we have that $(1+i)^{2n}P = 0$ and $(1+i)^{2n-2}P \neq 0 $. 
\end{lemma}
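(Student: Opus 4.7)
The plan is to leverage the elementary identity $(1+i)^2 = 2i$ in $\Z[i]$, which translates the $(1+i)$-order of $P$ in the $\Z[i]$-module $E(\F_p)$ into its ordinary order under doubling, a quantity we already control by order universality.

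First I would apply Corollary \ref{elliptic fermat order universality prime corollary}: since $p$ is an odd prime with $p \mid F_n(E,P)$, and since $\Delta(E)=2^9$ ensures $p \nmid 6\Delta(E)$, the point $P$ has order exactly $2^n$ in $E(\F_p)$. So $2^n P = (0:1:0)$ while $2^{n-1} P \neq (0:1:0)$. Before using the $\Z[i]$-action, I would briefly confirm that $i:(x,y) \mapsto (-x, iy)$ really is an automorphism of $E(\F_p)$: the previous lemma gives $p \equiv 1 \pmod 8$, so $\sqrt{-1}$ exists modulo $p$; the map sends curve points to curve points because $(iy)^2 = -y^2 = (-x)^3 - 2(-x)$; and $i \circ i$ is negation on $E$, so $i$ has order $4$ and is therefore a bijection on $E(\F_p)$.

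Next I would compute $(1+i)^2 = 2i$ in $\Z[i]$, so that $(1+i)^{2n} = 2^n i^n$ and $(1+i)^{2n-2} = 2^{n-1} i^{n-1}$ as elements of $\Z[i]$. Applying these to $P$ inside the $\Z[i]$-module $E(\F_p)$ yields $(1+i)^{2n} P = i^n(2^n P) = (0:1:0)$ and $(1+i)^{2n-2} P = i^{n-1}(2^{n-1} P)$. Since $i^{n-1}$ is an automorphism of $E(\F_p)$ (hence sends nonzero points to nonzero points) and $2^{n-1} P \neq (0:1:0)$, the second expression is nonzero, proving the lemma.

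The main obstacle is essentially bookkeeping rather than genuine difficulty: one must have the $\Z[i]$-module structure on $E(\F_p)$ in hand (established in the paragraph just preceding the lemma) together with the precise $2$-adic order of $P$ in $E(\F_p)$ (Corollary \ref{elliptic fermat order universality prime corollary}). Once both are in place, the whole argument collapses onto the single identity $(1+i)^2 = 2i$ and the observation that multiplying by a unit of $\Z[i]$ preserves nonvanishing.
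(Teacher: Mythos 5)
Your proposal is correct and follows essentially the same route as the paper: both rest on the identity $(1+i)^2 = 2i$, the fact that $P$ has exact order $2^n$ in $E(\F_p)$ by Corollary \ref{elliptic fermat order universality prime corollary}, and the observation that the unit $i$ acts as an automorphism so cannot kill the nonzero point $2^{n-1}P$. Your extra checks (that $(x,y)\mapsto(-x,iy)$ preserves the curve and that $p \nmid 6\Delta(E) = 6\cdot 2^9$) are sound and only make the argument more complete.
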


\begin{proof}
Note that $(1+i)^k P = 2^ki^kP$. Recall that $P$ has order $2^n$, so $(1+i)^{2n} P = (2i)^nP = i^n(2^n P) = i^n\cdot 0 = 0$. It suffices to show that $(1+i)^{x} P \neq 0$ for $x \leq 2n-2$. Suppose not, and $(1+i)^{x}P = 0$. Then certainly $(1+i)^{2n-2} = i^{n-1} 2^{n-1} P = 0$.
The action of $i^{n-1}$ makes no difference on the identity. This implies that $2^{n-1} P = 0$, contradicting order universality since $P$ has order $2^n$.
\end{proof}

With this last lemma proven, we are ready to introduce the Fermat-like congruence in full regalia and finish Theorem \ref{cong}.

\begin{proof}[Proof of Theorem \ref{cong} for $p \equiv 1 \pmod 4$]
As a consequence of the above lemma, we have that either $(1+i)^{2n}P = 0$ or $(1+i)^{2n-1}P = 0$. We are able to bolster the $2n-1$ case by introducing a new point $Q = (-i(\sqrt 2 - 2), (2-2i)(\sqrt 2 - 1))$. It is routine point addition to see that $(1+i)Q = (2,2) = P$.  In either case we have that $(1+i)^{2n+1}Q = 0$ and $(1+i)^{2n-1}Q \neq 0$. 

Consider the $\Z[i]$-module homomorphism $\phi: \Z[i] \to E(\F_p)$ given by $\phi(x) = xQ$. The image of $\phi$ is $\Z[i]Q = \{(a+bi)Q\mid a,b \in \Z\}$, the orbit of $\Z[i]$ on $Q$. By the first isomorphism theorem, $\Z[i]Q$ is isomorphic to $\Z[i] / \ker(\phi)$. Since $(1+i)^{2n-1} \not\in \ker(\phi)$ and $(1+i)^{2n+1} \in \ker(\phi)$, and $(1+i)$ is an irreducible ideal in $\Z[i]$, then the kernel is either the ideal $((1+i)^{2n})$ or $((1+i)^{2n+1})$, hence $\Z[i] / \ker(\phi)$ is a group of size $2^k$ where $k = 2n$ or $k = 2n+1$.

Like the previous congruence results, we use Lagrange's theorem to assert $2^k \mid |E(\F_p)|$ and through the same reasoning as before, we arrive at $p \equiv 1 \pmod {2^{\floor{k/2}} = 2^{n}}$.
\end{proof}

We now present the proofs of Theorems \ref{congversef} and \ref{congversem}, which give us information about sufficiently large Fermat and Mersenne primes dividing the elliptic Fermat sequence $\{F_n(E,P)\}$. First, we provide two lemmas.

\begin{lemma}\label{uniroot}
Let $p \equiv \pm 1 \pmod{2^n}$ be an odd prime. Let $\zeta_{\ell}$ denote a primitive $\ell$th root of unity in some extension of $\F_p$. Then $\zeta_{2^k} + \zeta_{2^k}^{-1}$ exists in $\F_p$ for all $k \leq n$.
\end{lemma}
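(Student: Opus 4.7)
The plan is to exploit the fact that an element of an extension of $\F_p$ lies in $\F_p$ precisely when it is fixed by the Frobenius automorphism $\sigma : x \mapsto x^p$. First I would note that since $p$ is odd, $\gcd(2^k, p) = 1$, so the polynomial $x^{2^k} - 1$ is separable over $\F_p$, and a primitive $2^k$-th root of unity $\zeta_{2^k}$ exists in some extension of $\F_p$ (e.g.\ in $\overline{\F_p}$). In particular, $\zeta_{2^k} + \zeta_{2^k}^{-1}$ is a well-defined element of $\overline{\F_p}$, and the only question is whether it actually lies in $\F_p$.

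Next I would use the hypothesis $p \equiv \pm 1 \pmod{2^n}$ together with $k \leq n$. Since $2^k \mid 2^n$, reducing the congruence mod $2^k$ gives $p \equiv \pm 1 \pmod{2^k}$. Consequently $\zeta_{2^k}^p = \zeta_{2^k}^{\pm 1}$, i.e.\ Frobenius sends $\zeta_{2^k}$ either to itself or to its inverse.

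In either case, applying $\sigma$ to $\zeta_{2^k} + \zeta_{2^k}^{-1}$ yields
\[
\sigma(\zeta_{2^k} + \zeta_{2^k}^{-1}) = \zeta_{2^k}^{\pm 1} + \zeta_{2^k}^{\mp 1} = \zeta_{2^k} + \zeta_{2^k}^{-1},
\]
so the sum is Frobenius-fixed and therefore lies in $\F_p$, as desired. I do not anticipate any serious obstacle here: the argument is essentially just a Galois-theoretic observation once one notices that $\zeta_{2^k} + \zeta_{2^k}^{-1}$ is symmetric under $\zeta \leftrightarrow \zeta^{-1}$. The only minor subtlety worth flagging is the choice of primitive root: the conclusion is independent of the choice because replacing $\zeta_{2^k}$ by another primitive $2^k$-th root of unity $\zeta_{2^k}^a$ (with $a$ odd) only changes $\zeta_{2^k} + \zeta_{2^k}^{-1}$ by the same $\F_p$-valued symmetric-function relation, and in any case the preceding Frobenius argument applies verbatim.
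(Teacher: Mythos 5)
Your proof is correct and takes essentially the same approach as the paper: both arguments reduce to the observation that $p \equiv \pm 1 \pmod{2^k}$ forces the Frobenius $x \mapsto x^p$ to either fix or invert $\zeta_{2^k}$, hence to fix $\zeta_{2^k} + \zeta_{2^k}^{-1}$. The paper merely presents this less directly, splitting into the cases $p \equiv 1$ and $p \equiv 3 \pmod 4$ and verifying the fixed-point condition via the factorization $(\zeta_{2^k}^p - \zeta_{2^k})(\zeta_{2^k}^p - \zeta_{2^k}^{-1}) = 0$, whereas you use the hypothesis in the forward direction only, which is all that is needed.
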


\begin{proof}
If $p \equiv 1 \pmod {2^k}$, then clearly there is a primitive $2^k$th root of unity in $\F_p$. 

If $p \equiv 3 \pmod 4$, then we employ methods from Galois theory. First, because $p \equiv -1 \pmod{2^k}$, then $p^2 \equiv 1 \pmod{2^k}$. Then there is a primitive $2^k$th root of unity in $\F_{p^2}$. Then we have that $\alpha = \zeta_{2^k} + \zeta_{2^k}^{-1}$ is in $\F_p$ if and only if $\sigma(\alpha) = \alpha$, where $\sigma(x) = x^p$ the Frobenius endomorphism.

This says that $\alpha \in \F_p$ if and only if $\alpha^p = (\zeta_{2^k} + \zeta_{2^k}^{-1})^p = \zeta_{2^k}^p + \zeta_{2^k}^{-p} = \zeta_{2^k} + \zeta_{2^k}^{-1}$. We may write this equality as $\zeta_{2^k}^{2p} + \zeta_{2^k}^{p+1} + \zeta_{2^k}^{-p+1} + 1 = 0$. This factors into $(\zeta_{2^k}^p - \zeta_{2^k})(\zeta_{2^k}^p - \zeta_{2^k}^{-1}) = 0$. Then the equality holds if and only if $\zeta_{2^k}^p = \zeta_{2^k}$, meaning $p \equiv 1 \pmod {2^k}$, or $\zeta_{2^k}^p = \zeta_{2^k}^{-1}$, hence $p \equiv -1 \pmod{2^k}$.
\end{proof}

\begin{lemma}\label{Qexist}
Let $p$ be a Fermat or Mersenne prime that is at least 31. Then there exists a $Q \in E(\F_p)$ such that $2Q = P$.
\end{lemma}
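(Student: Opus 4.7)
The plan is to exhibit $Q$ explicitly by solving $X(2Q)=X(P)=2$ for $x_Q\in\F_p$ and then checking that an associated $y_Q$ also lies in $\F_p$. The duplication formula on $E\colon y^2=x^3-2x$ collapses to $X(2Q)=(x^2+2)^2/\bigl(4x(x^2-2)\bigr)$, so the condition $X(2Q)=2$ is equivalent to the quartic
\[
x^4-8x^3+4x^2+16x+4=0.
\]
Both Fermat primes $\ge 17$ and Mersenne primes $\ge 7$ are $\equiv \pm 1\pmod 8$, so $\sqrt{2}\in\F_p$, and a direct check shows this quartic factors over $\F_p$ as
\[
\bigl(x^2-(4+2\sqrt 2)x-2\bigr)\bigl(x^2-(4-2\sqrt 2)x-2\bigr).
\]

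Next I would solve the first quadratic factor, whose discriminant is $16(2+\sqrt 2)$. Its roots therefore lie in $\F_p$ precisely when $\sqrt{2+\sqrt 2}\in\F_p$, and this is exactly where Lemma~\ref{uniroot} is brought to bear: the identity $\zeta_{16}+\zeta_{16}^{-1}=2\cos(\pi/8)=\sqrt{2+\sqrt 2}$ shows that the needed square root exists in $\F_p$ whenever $p\equiv\pm 1\pmod{16}$. The hypothesis $p\ge 31$ secures this congruence for every prime in play: any Fermat prime $p\ge 257$ is $\equiv 1\pmod{256}$, and any Mersenne prime $p=2^q-1\ge 31$ has $q\ge 5$, hence $p\equiv -1\pmod{32}$. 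Choosing $r\in\F_p$ with $r^2=2+\sqrt 2$, I would set $x_Q=(2+\sqrt 2)+2r\in\F_p$.

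To produce $y_Q\in\F_p$ I would use the defining quadratic $x_Q^2=(4+2\sqrt 2)x_Q+2$ to simplify
\[
y_Q^2 = x_Q(x_Q^2-2) = (4+2\sqrt 2)\,x_Q^2 = 2r^2 x_Q^2 = \bigl(\sqrt 2\cdot r\cdot x_Q\bigr)^2,
\]
which is manifestly a square in $\F_p$; then $y_Q=\sqrt 2\,r\,x_Q\in\F_p$ works. This exhibits $Q=(x_Q,y_Q)\in E(\F_p)$ with $X(2Q)=2$, hence $2Q=\pm P$; if $2Q=-P$, replace $Q$ by $-Q\in E(\F_p)$ to obtain $2Q=P$.

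The rest of the argument is elementary algebra, so the single real obstacle is producing $\sqrt{2+\sqrt 2}$ in $\F_p$. That nested radical is precisely the object that Lemma~\ref{uniroot} was set up to deliver (applied with $k=4$), and it is also what forces the lower bound $p\ge 31$: the small Fermat and Mersenne primes such as $3,5,7$ are not $\equiv\pm 1\pmod{16}$, and indeed a direct check in $E(\F_7)$ shows that $P$ fails to be $2$-divisible there, so some such hypothesis is genuinely necessary.
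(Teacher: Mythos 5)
Your proof is correct, and it takes a genuinely more explicit route than the paper's. The paper factors multiplication by $2$ through the $2$-isogenous curve $E'\colon y^2=x^3+8x$ as $[2]=\phi\circ\psi$, identifies the tower $\Q\subset\Q(\sqrt2)\subset\Q\bigl(\sqrt2,\sqrt{2+\sqrt2}\bigr)$ over which the two successive square roots can be taken, verifies in Magma that $P$ is divisible by $2$ over that field, and then checks that $\sqrt2$ and $\sqrt{2+\sqrt2}$ reduce into $\F_p$. You reach the same field by brute force: setting $X(2Q)=2$ in the duplication formula gives the quartic $x^4-8x^3+4x^2+16x+4=0$, which you correctly factor over $\F_p(\sqrt2)=\F_p$ into two quadratics with discriminants $16(2\pm\sqrt2)$, and Lemma~\ref{uniroot} (via $(\zeta_{16}+\zeta_{16}^{-1})^2=2+\sqrt2$) supplies the root $x_Q=(2+\sqrt2)+2\sqrt{2+\sqrt2}$; the identity $y_Q^2=(4+2\sqrt2)x_Q^2=(\sqrt2\,\sqrt{2+\sqrt2}\,x_Q)^2$ then hands you $y_Q$ for free. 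What your version buys is a fully self-contained, computer-free construction of $Q$ with explicit coordinates, and it makes transparent exactly where the hypothesis $p\ge 31$ enters (forcing $p\equiv\pm1\pmod{16}$). Two pedantic loose ends you could add in one line each: the labelling of $\sqrt2$ is immaterial because $(2+\sqrt2)(2-\sqrt2)=2$ is a square for $p\equiv\pm1\pmod 8$, so $2+\sqrt2$ and $2-\sqrt2$ are squares simultaneously and Lemma~\ref{uniroot} forces both to be squares; and $y_Q\ne 0$ (since $x_Q\ne0$ and $2+\sqrt2\ne0$ in $\F_p$), so $Q$ is not $2$-torsion and $2Q$ genuinely has $x$-coordinate $2$, after which the sign adjustment $Q\mapsto -Q$ finishes the argument.
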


\begin{proof}
From Silverman and Tate \cite[p. 76]{SilvermanTate}, for $E$ we have its isogenous curve $E': y^2 = x^3 + 8x$ and two homomorphisms, $\phi: E \to E'$ and $\psi: E' \to E$ given by:

$$\phi(x,y) = \begin{cases}
\left(\frac{y^2}{x^2}, \frac{y(x^2 + 2)}{ x^2}\right) & \text{if } (x,y) \neq (0:0:1), (0:1:0)  \\
(0:1:0), & \text{otherwise},\\
\end{cases}
$$

$$\psi(x,y) = \begin{cases}
\left(\frac{y^2}{4x^2}, \frac{y(x^2 - 8)}{ 8x^2}\right) & \text{if } (x,y) \neq (0:0:1), (0:1:0)  \\
(0:1:0), & \text{otherwise}.\\
\end{cases}
$$

The maps hold the special property $\phi \circ \psi (S) = 2S$. The advantage of this framework is that we are able to break point-halving, a degree 4 affair, into solving two degree 2 problems. Another fact from Silverman and Tate \cite[p. 85]{SilvermanTate} is that $P = (x,y) \in \psi(E'(\Q))$ if and only if $x$ is a square.

We now use this to show there is a $Q \in E(\F_p)$ such that $2Q = P$. For brevity, let $z = \sqrt{2+\sqrt{2}}$. and we define the following ascending chain of fields: $\Q, K = \Q\left(\sqrt2\right)$ and $L = K(z)$. Here $K$ is the minimal subfield where $P$ has a $\psi$ preimage $Q_1$ in $E'$, and $L$ is the minimal subfield where that preimage has its own $\phi$ preimage $Q$ in $E$. It is a quick check in Magma to verify that for $E(L)$, $P$ is divisible by 2. It then remains to verify that the elements $\sqrt{2}$ and $z = \sqrt{2+\sqrt{2}}$ are in $\F_p$.

First, we have that since $2$ has order $p$, which is odd, then there exists $h_k \in (\F_p)^\times$ such that $(h_k)^{2^k} = 2$. So any 2-power root of 2 is sure to exist.

For $z = \sqrt{2+\sqrt{2}}$ itself, we use Lemma \ref{uniroot} and $p \equiv \pm1 \pmod {16}$ to show that we have an element $z = \zeta_{16} + \zeta_{16}^{-1} \in \F_{p}$, so we have all the necessary elements of $L$ in $E(\F_p)$ to show there exists a $Q \in E(\F_p)$ such that $2Q = P$.

\end{proof}

These two lemmas will allow us to sharpen the threshold to search for Fermat and Mersenne primes in the elliptic Fermat sequence. We now prove Theorem \ref{congversef}.

\begin{proof}
First, it is a quick computation in Magma to verify that for $p = 5,17$, $P$ does not have a 2-power order in $E(\F_p)$, and so by Corollary \ref{elliptic fermat order universality prime corollary}, 5 and 17 do not divide any elliptic Fermat number generated by $P$.

We rely on Proposition \ref{specialorder} and Lagrange's theorem. For a classical Fermat prime $F_n \neq 5, 17$, we have that $2$ is a fourth power in $\Z/F_n\Z$.
We can see this because for a generator $g$ of $\Z/F_n\Z$, we have that $2= g^k$, additionally, we have that $g^{p-1} = g^{2^{2^n}} = 1$. We will show that $k \equiv 0 \pmod 4$. This is because $2$ has order $2^{n+1} \in \Z/F_n\Z$, and so $2^{2^{n+1}} = (g^{k})^{2^{n+1}} = 1$. Therefore, $2^{2^n} \mid k(2^{n+1})$, finally giving $2^{2^n - n-1} \mid k$, which is a multiple of 4 for $n \geq 3$.

Since $2$ is a fourth power in $\F_p$, we know that $E : y^2 = x^3 - 2x$ is isomorphic to the curve $E' : y^2 = x^3 - x$. From Denomme and Savin \cite{DenommeSavin}, we also have that $E'(\F_p) \cong \Z[i]/(1+i)^{2^n}$. Moreover, $\Z[i]/(1+i)^{2^n} = \Z[i]/2^{2^{n-1}} \cong (\Z/2^{2^{n-1}}\Z) \times (\Z/2^{2^{n-1}}\Z)$, from which we can deduce that $E(\F_p) \cong (\Z/2^{2^{n-1}}\Z) \times (\Z/2^{2^{n-1}}\Z)$. 
Thus the order of $P$ is a divisor of $2^{2^{n-1}}$.

By Lemma \ref{Qexist}, we know there exists some $Q \in E(\F_p)$ such that $2Q = P$. In light of this we can tighten this initial upper bound by noting that all elements have order dividing $2^{2^{n-1}}$, and so $2^{2^{n-1}-1}P = 2^{2^{n-1}-1}(2Q) = 2^{2^{n-1}}Q = 0$. We conclude that P has order dividing $2^{2^{n-1}-1}$ and so $p$ must divide $F_k(E,P)$ for some $k \leq 2^{{n-1}}-1$ by Corollary \ref{elliptic fermat order universality prime corollary}.

\end{proof}

It remains to discuss the appearance of a Mersenne prime in the elliptic Fermat sequence. We prove Theorem \ref{congversem}.

\begin{proof}
The method we take to show this bound begins with the fact that $|E(\F_p)| = p+1 = 2^q$. Additionally, we have that $E(\F_p) \cong \Z/m\Z \times \Z/mn\Z$, where $p \equiv 1 \pmod m$. Combining this with $p \equiv -1 \pmod {2^q}$ we have that $E(\F_p) \cong\Z/2\Z \times \Z/2^{p-1}\Z$. So the order of any point in $E(\F_p)$ must divide $2^{p-1}$. It suffices to exhibit a point $R$ such that $4R = P$, so that $2^{p-3}P = 2^{p-3}2^{2}R = 2^{p-1}R = 0$.

Continuing the methodology first used in the proof of Lemma \ref{Qexist}, we will show that such an $R \in E(\F_p)$ so that $2R = Q$, where $Q \in E(L)$ is the point found in Lemma \ref{Qexist} . To this, we extend the fields from Lemma \ref{Qexist} and create $M = L\left(\sqrt{z(2+z)}\right)$ and  $N = M \left(\sqrt{\sqrt{2}(z-1)}\right)$. Again, one may check in Magma that indeed $P$ is divisible by 4 in $E(N)$, so we just need to check for the existence of necessary elements.

We have already shown there is an element $z$ such that $z^2 = 2 + \sqrt{2}$, but we further assert that in $\F_p$, $2+\sqrt{2}$ has odd order, and thus all 2-power roots exist. This is quick to see because $ (2+\sqrt{2})^{(p-1)/2} = (z^2)^{(p-1)/2} = z^{p-1} = 1$.

We now find $\sqrt{z(2+z)}$, which amounts to finding a square root of $z$ and $2+z$. By the above, we already have a square root of $z$, so we just need to show the existence of the square root of $2+z$. This is simple if we let $w = \zeta_{32} + \zeta_{32}^{-1}$ in $\F_p$, which we know to exist if $p \equiv -1 \pmod {32}$. Then $w^2 = 2+z$.

It remains to find $\sqrt{\sqrt{2}(z-1)}$. Again it suffices to just find a square root of $z-1$. To show such a root exists, consider $(z-1)(-z-1) = -z^2 + 1 = 1 - \sqrt{2} = (-1)(1+\sqrt{2})$. Note that $z = \sqrt[4]{2}\sqrt{(1+\sqrt{2})}$, and that $1+\sqrt{2}$ is a square because $\sqrt[4]{2}$ and $z$ are squares, but $-1$ is not a square modulo $p$ since $p \equiv -1 \pmod 4$, so $(z-1)(-z-1)$ is not a square. This implies that exactly one of $(z-1)$ and $(-z-1)$ is a square. So we choose the appropriate $z'$ such that $z' - 1 $ is a square and we are done.

Since all adjoined elements exist in $\F_p$, we are good to construct points $R$ such that $4R = 2Q = P$. Similar to Theorem \ref{congversef}, this implies that we can tighten the condition that $|P| \mid 2^{p-1}$ further by $|P| \mid 2^{p-3}$, and so by Corollary \ref{elliptic fermat order universality prime corollary}, $p$ must divide $F_k(E,P)$ for some $k \leq p-3$.
\end{proof}

\bibliographystyle{ieeetr}
\bibliography{PAPER_bib}

\end{document}